\begin{document}
	\title[\hfilneg \hfil On the concentration-compactness principle]
	{On the concentration-compactness principle for anisotropic variable exponent Sobolev spaces and its applications}	
	\author[N. Chems Eddine, M.A. Ragusa, D.D. Repov\v{s}
	\hfil \hfilneg]
	{Nabil CHEMS EDDINE, Maria Alessandra RAGUSA$^*$, Du\v{s}an D. REPOV\v{S}}  		
		\address{Nabil Chems Eddine\newline
		Laboratory of Mathematical Analysis and Applications,  
		Department of Mathematics, 
		Faculty of Sciences, 
		Mohammed V University, 
		Rabat, Morocco.}
	\email{nabil.chemseddine@um5r.ac.ma}		
		\address{ Maria Alessandra Ragusa \newline
		Dipartimento di Matematica e Informatica, NANOMED, Research Centre for Nanomedicine and Pharmaceutical Nanotechnology, Universitá di Catania, Catania, Italy.\newline
		Faculty of Fundamental Science, Industrial University, Ho Chi Minh City, Vietnam.}
	\email{maragusa@dmi.unict.it}	
		\address{  Du\v{s}an D. Repov\v{s} \newline
	  Faculty of Education, University of Ljubljana,  Ljubljana, Slovenia.\newline
		Faculty of Mathematics and Physics, University of Ljubljana,  Ljubljana, Slovenia.\newline
		 Institute of Mathematics, Physics and Mechanics,  Ljubljana, Slovenia.}
	\email{dusan.repovs@guest.arnes.si}	
	\thanks{$^*$Corresponding author}
	\subjclass[2020]{35B33, 35D30, 35J20, 35J60, 46E35}
	\keywords{Sobolev embeddings, Concentration-compactness principle,  Anisotropic variable exponent Sobolev spaces,  $\overrightarrow{p}(x)$-Laplacian}	
	\begin{abstract}
		 We obtain critical embeddings and the concentration-compactness principle for the anisotropic variable exponent Sobolev spaces. As an application of these results,we confirm the existence of and find infinitely many nontrivial solutions for a class of nonlinear critical anisotropic elliptic equations involving variable exponents and two real parameters. With the groundwork laid in this work, there is potential for future extensions, particularly in extending the concentration-compactness principle to anisotropic fractional order Sobolev spaces with variable exponents in bounded domains. This extension could find applications in solving the generalized fractional Brezis-Nirenberg problem.
	\end{abstract}	
	\maketitle
	\numberwithin{equation}{section}
	\newtheorem{theorem}{Theorem} 
	\newtheorem{lemma}{Lemma}
		\newtheorem{definition}{Definition}
	\newtheorem{proposition}{Proposition}
	\newtheorem{remark}{Remark}
		\newtheorem{example}{Example}
		\newtheorem{question}{Problem}
	\allowdisplaybreaks	
 
	\section{Introduction}	
In recent years, increasing attention has been paid to the study of differential and partial differential equations involving the variable exponent in general, and anisotropic equations with different orders of derivation in different directions in particular. The main interest in studying such problems has been stimulated by their various applications in physical and related sciences. Indeed, there are many applications concerning nonlinear elasticity problems,  contact mechanics, electrorheological fluids, robotics, space technology, image processing,  flow in porous media, etc. (for more details see  Antontsev et al. \cite{AnDi},  Antontsev and Rodrigues \cite{AnRo}, Bear \cite{Bear}, Boureanu et al. \cite{BoureanuMatei}, Chen et al. \cite{ChenL}, Diening \cite{Diening}, R\u{a}dulescu and Repov\v{s} \cite{radulescu1}, R\.{u}\v{z}i\v{c}ka \cite{R}, Simmonds \cite{SobSimmonds}, Stanway et al. \cite{Sob5}, Zhikov \cite{Zhikov}, and the references therein).\par	

To the best of our knowledge, anisotropic equations with different orders of derivation in different directions, involving critical variable exponents have never been studied before. In the subcritical case, we refer the reader to the papers Boureanu and Udrea \cite{Boureanu1}, Boureanu and R\u{a}dulescu \cite{Boureanu}, Fan \cite{fananiso}, Ji \cite{ji},  Mih\u{a}ilescu et al. \cite{m1,m2}, and
Ourraoui and Ragusa \cite{Ourraoui}.\par	

One of the main points in the study of these equations is the generalization of the well-known Anisotropic Sobolev Immersion Theorem:
If $\Omega$ is a subset of $\mathbb{R}^{N}$ and $\overrightarrow{p}:\Omega\to\mathbb{R}^N$ is the vector function $ \overrightarrow{p}(x)=\left(p_1(x), \dots,p_N(x)\right) $
such that  $1< \displaystyle p_i^-:=\inf_{x\in \Omega}p_i(x)\leq p_i^+:=\sup_{x\in \Omega}p_i(x)<N,$  for all $i\in\{1,\dots,N\}$, then there is a
continuous
embedding
(resp. a compact embedding
$W^{1,\overrightarrow{p}(x)}(\Omega)\hookrightarrow L^{h(x)}(\Omega)),$
if the exponent $h: \Omega\to [1,+\infty)$  satisfies
$
h(x)\leq P^{\ast}(x)$ for continuous embedding
(resp. $h(x)< P^{\ast}(x),$ for compact embedding),
where  $P^{\ast}(x)$ is  the critical Sobolev exponent.

We mention the most important results on this topics. When $\Omega \subset \mathbb{R}^N(N\geq3)$ is a bounded domain with smooth boundary, Mih\u{a}ilescu et al. \cite{m1}
proved that for all  continuous function $h$ satisfying
$
1< h(x) < P^{\ast}(x):= \max\{ p_{\max}^-, p_m^{\ast}\},$
where
$
p_{\max}^{-}:=\max_{1\leq i\leq N}\{p_i^-\},
$
$
p_m^{\ast}:=N / (\sum_{i=1}^{N} \frac{1}{p_i^{-}}-1),
$
$
\sum_{i=1}^{N} \frac{1}{p_i^{-}}>1,
$
$W_0^{1,\overrightarrow{p}(x)}(\Omega)$  is
compactly embeddable in
$L^{h(x)}(\Omega)$. Subsequently,
Ji \cite{ji}
showed also that for all  continuous function $h$ satisfying
$
1< h(x) < P^{\ast}(x):=\frac{Np_{m}^{-}}{N-p_{m}^{-}}, \text{ where  } 	p_{m}^{-}:= \min_{1\leq i\leq N}\{p_i^-\},
$
the space
$W^{1,\overrightarrow{p}(x)}(\Omega)$  is  compactly embeddable
in
$L^{h(x)}(\Omega)$. \par
Note that the cited results, in which critical exponent  $\max\{ p_{\max}^-, p_m^{\ast}\}$ or $\frac{Np_{m}^{-}}{N-p_{m}^{-}}$ is constant
exponent, are optimal in the environment of constant exponent Lebesgue spaces. In the present work, we shall  be interested
in extending these results, by giving sufficient conditions for
$p_i, \  i=1,2,\dots,N$
so that $W^{1,\overrightarrow{p}(x)}(\Omega)$
is embeddable in $L^{P^{\ast}(x)}(\Omega),$ where $\Omega \subset \mathbb{R}^N(N\geq2)$ is a bounded domain with smooth boundary and $P^{\ast}(x)=\frac{Np_{m}(x)}{N-p_{m}(x)}$
is optimal in the environment of variable exponent Lebesgue spaces. We conclude this paragraph with the following open problem.
\begin{question}
	What are sufficient conditions for $p_i, i=1,2,\dots,N$ and $\Omega \subset \mathbb{R}^N$, so that $W^{1,\overrightarrow{p}(x)}(\Omega)$ or $W_0^{1,\overrightarrow{p}(x)}(\Omega)$ is embeddable in
	$L^{P^{\ast}(x)}(\Omega),$  where
	$
	P^{\ast}(x):= \frac{N\bar{p}(x)}{N-\bar{p}(x)} \text{ and } \bar{p}(x)
	:=\frac{N}{\sum_{i=1}^{N} \frac{1}{p_i(x)}}.
	$
\end{question}\par 
On the other hand, when $p_i(x)= p(x)$ or $p$ is constant for all $i\in \{1,2,\dots,N\}$, the class of elliptic equations involving critical growth has received great attention following the seminal work of Brezis and Nirenberg \cite{Brezis} in 1983 for Laplacian equations. Since then, there have been extensions of \cite{Brezis} in many directions, see e.g.,  Servadei and Valdinoci \cite{Servadei-Valdinoci1,Servadei-Valdinoci2}.

The principal challenge in solving elliptic problems characterized by critical growth lies in the absence of compactness when embedding Sobolev spaces into Lebesgue spaces within the framework of variational methods. To overcome this obstacle, Lions \cite{Lions1} introduced the concentration-compactness principle (CCP) in 1985 to establish the precompactness of minimizing sequences or Palais-Smale (PS) sequences. For bounded domains, a variable exponent adaptation of Lions' concentration-compactness principle was independently derived by Bonder and Silva \cite{Bonder}, as well as by Fu \cite{Fu}. Since then, numerous researchers have applied these findings to tackle critical elliptic problems involving variable exponents, see e.g., Alves and Ferreira \cite{Alves1}, Alves and Barreiro \cite{Alves0}, Chems Eddine and Ragusa \cite{Chems0,Chems1}, Fu and Zhang \cite{Fu1}, Ho and Sim \cite{HoSim}, Hurtado et al. \cite{Hurtado}, and the references therein.

For the fractional $p(x)$-Laplacian on bounded domains, the CCP was established in the linear case $p = 2$ by Palatucci and Pisante \cite{Palatucci-Pisante} and for constant $p$ by Mosconi and Squassina \cite{Mosconi-Squassina}. The CCP for the fractional
Sobolev spaces with variable exponents was extended by Ho and Kim \cite{Ho}. Using the concentration-compactness principle, they provide sufficient conditions for the existence of a nontrivial solution to the generalized fractional Brezis–Nirenberg problem. Notably, El Hamidi and Rakotoson \cite{El Hamidi1} extended the concentration-compactness principle to anisotropic Sobolev spaces with constant exponents when all $p_i$ are constant functions. This extension paved the way for demonstrating the attainment of a critical best Sobolev constant. Subsequently, various authors have effectively addressed critical problems involving the $\overrightarrow{p}$-Laplacian operator, as exemplified by the works of Alves and El Hamidi \cite{Alves} and Figueiredo et al. \cite{Figueiredo,Figueiredo1}.

Recently, Chaker et al. \cite{Chaker-Kim-Weidner} extended the concentration-compactness principle for the anisotropic fractional $\overrightarrow{p}$-Laplacian of mixed order to unbounded domains. Combining ideas from the anisotropic case as presented in this paper, from the nonlocal case as in Ho and Kim \cite{Ho}, and from the anisotropic nonlocal operator in \cite{Chaker-Kim-Weidner}, we anticipate that this combined approach will allow us to extend the concentration-compactness principle to anisotropic fractional order Sobolev spaces with variable exponents in bounded domains in the future. Such an extension holds the potential for applications in solving the anisotropic generalized fractional Brezis-Nirenberg problem.\par 

As mentioned previously, there were no prior results available for nonlinear anisotropic elliptic equations with variable critical growth until this article. However, it is worth noting that a subsequent paper by Chems Eddine et al. \cite{ChemsNguyenRagusa}, published after this article, utilized the results obtained in this work. They applied these findings to a specific class of critical anisotropic elliptic equations of Schrödinger-Kirchhoff-type. Although the crucial Sobolev immersion theorem holds for anisotropic Sobolev with variable exponents, we do not know if there are results for the critical Sobolev type embedding for the anisotropic variable exponent Sobolev spaces defined on a bounded domain, see e.g., Ji \cite{ji}, Fan \cite{fananiso}, Mih\u{a}ilescu et al. \cite{m1,m2}, and R\u{a}dulescu and Repov\v{s} \cite{radulescu1}. Because of this, our first aim is to obtain a critical embedding from anisotropic variable exponent Sobolev spaces into variable exponent Lebesgue spaces. We give sufficient conditions on the variable exponents, such as the log-Hölder type continuity condition on the minimum function of the exponents, to obtain such critical embedding (see Theorem \ref{theo23}). Using this critical embedding, we establish the extension of the Lions concentration-compactness principle for anisotropic variable exponent Sobolev spaces, inspired by Bonder and Silva \cite{Bonder}, Fu \cite{Fu}, Ho and Kim \cite{Ho},
and
Lions \cite{Lions1}, which are our second aim (Theorem \ref{ccp}). As an application of these results, we establish the existence and multiplicity of nontrivial solutions for the following class of nonhomogeneous anisotropic eigenvalue critical problems
\begin{equation}\label{e1.1}
	\begin{cases}
		\displaystyle- \sum_{i=1}^N\partial_{x_i}a_i (x,\partial_{x_i}u ) + \lambda | u|^{r(x)-2}u=| u|^{h(x)-2}u + \beta f(x,u) \  \text { in }\Omega, \\
		u=0 \   \text{on }\partial\Omega,
	\end{cases}
\end{equation}
where $\Omega \subset \mathbb{R}^N (N\geq 2)$ is a bounded domain with a Lipschitz boundary $\partial \Omega$,  $\lambda$ and $\beta$ are real parameters such that  $\beta$ is positive, functions $r, h$ and $p_i, i=1,2,\dots,N$ are continuous on $\overline{\Omega}$ and satisfy some conditions to be specified below, and $f: \Omega \times \mathbb{R}\to \mathbb{R}$ is a Carathéodory function with the potential
$F(x,\xi)=\int_{0}^{\xi}f(x,t)dt,$
that satisfies some conditions which will be specified later.
The differential operator $\displaystyle\sum_{i=1}^N\partial_{x_i}a_i (x,\partial_{x_i}u )$ was originally introduced by Boureanu and Rădulescu \cite{Boureanu}. This operator is a more general type of Laplacian operator.  The functions
$a_i(x,\xi)$  represent the continuous derivatives with respect to $\xi$ of the mapping  $A_i:\Omega\times\mathbb{R}\to\mathbb{R}$, denoted as $A_i=A_i(x,\xi)$, that is, $a_i(x,\xi)=\frac{\partial}{\partial \xi}A_i(x,\xi)$.\par

In this paper, we shall assume that the following hypotheses hold for all $1\leq i \leq N$ :
\begin{itemize}	
	\item[$(\textbf{\textit{A}}_1)$] There exist  $c_{a_i}>0$ such
	that
	$
	|a_i(x,\xi)|\leq c_{a_i}\left(g_i(x)+|\xi|^{p_i(x)-1}\right),
	\
	\hbox{for  a.e.}
	\
	x\in\Omega
	\
	\hbox{ and  all}
	\
	\xi\in\mathbb{R},
	$
	where the nonnegative functions
	$g_i$ belong to  $L^{p'_i(x)}(\Omega)$, with $\frac{1}{p_i(x)}+\frac{1}{p'_i(x)}=1$.		
	\item[$(\textbf{\textit{A}}_2)$] There exist positive constants $k_i$ such that
	$$
	k_i|\xi|^{p_i(x)}\leq a_i(x,\xi)\xi \leq p_i(x)\;A_i(x,\xi),
	\		
	\hbox{for  a.e.}
	\
	x\in\Omega
	\
	\hbox{ and  all}
	\
	\xi\in\mathbb{R}.
	$$
	\item[$(\textbf{\textit{A}}_3)$]  The functions $a_i$ satisfy
	$
	\big(a_i(x,\xi)-a_i(x,\eta)\big)(\xi-\eta)> 0,
	\hbox{ for  a.e.}
	\\
	x\in\Omega
	\
	\hbox{ and  all}
	\
	\xi,\eta\in\mathbb{R},
	\
	\xi\neq \eta.
	$
\end{itemize}

The main feature of  this paper is  establishing the existence and multiplicity of nontrivial solutions to problem \eqref{e1.1} under the critical growth condition $ \{x\in\Omega \colon h(x)=p_m^*(x)\}\neq\emptyset$, where $p_m^*(x):=\frac{Np_m(x)}{N-p_m(x)}$ with $p_m(x)
=	\displaystyle\min_{1\leq i\leq N}\{p_i(x)\}$  (see Theorem \ref{thm1} for existence and Theorem \ref{thm2} for multiplicity).
The main results of this paper are as follows (conditions $(\textbf{\textit{f}}_i)$
and
$(\textbf{\textit{H}})$
will be defined in Section \ref{sec 4}).
\begin{theorem} \label{thm1}
	Suppose that assumptions {\rm ($\textbf{\textit{A}}_1$)}-{\rm ($\textbf{\textit{A}}_3$)}, {\rm ($\textbf{\textit{f}}_1$)-($\textbf{\textit{f}}_3$)}, and {\rm ($\textbf{\textit{H}}$)} hold.
	Then  for all  $\lambda\in\mathbb{R}$ and $\beta> 0$, problem \eqref{e1.1}
	has at least one nontrivial weak solution.
\end{theorem}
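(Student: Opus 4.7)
The plan is to recast problem \eqref{e1.1} variationally on $X:=W_0^{1,\overrightarrow{p}(x)}(\Omega)$ by introducing the Euler--Lagrange functional
\[J_{\lambda,\beta}(u)=\sum_{i=1}^N\int_\Omega A_i(x,\partial_{x_i}u)\,dx+\lambda\int_\Omega\frac{|u|^{r(x)}}{r(x)}\,dx-\int_\Omega\frac{|u|^{h(x)}}{h(x)}\,dx-\beta\int_\Omega F(x,u)\,dx,\]
whose critical points, thanks to $(\textbf{\textit{A}}_1)$--$(\textbf{\textit{A}}_3)$, coincide with the weak solutions of \eqref{e1.1}. The natural approach is the Mountain Pass Theorem; the main obstacle will be verifying a local Palais--Smale type condition in the presence of the critical exponent $h(x)=p_m^{*}(x)$ on a nonempty subset of $\Omega$, which is where Theorem \ref{ccp} becomes essential.

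First, I would verify the mountain pass geometry. Using $(\textbf{\textit{A}}_2)$ to get a bound of the form $A_i(x,\xi)\geq\frac{k_i}{p_i(x)}|\xi|^{p_i(x)}$, the continuous embedding provided by Theorem \ref{theo23} into $L^{h(x)}(\Omega)$, and the subcritical control on $f$ expected from $(\textbf{\textit{f}}_1)$--$(\textbf{\textit{f}}_2)$, one shows $J_{\lambda,\beta}(0)=0$ and $J_{\lambda,\beta}(u)\geq\alpha>0$ on a small sphere of $X$; the super-critical growth of $|u|^{h(x)}$, together with the Ambrosetti--Rabinowitz-type super-linearity anticipated from $(\textbf{\textit{f}}_3)$, then supplies a direction $e_0\in X$ along which $J_{\lambda,\beta}(te_0)\to-\infty$. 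The MPT yields a $(PS)_c$ sequence $\{u_n\}\subset X$ at the minimax level $c>0$, whose boundedness follows by combining $J_{\lambda,\beta}(u_n)\to c$ with $\langle J'_{\lambda,\beta}(u_n),u_n\rangle\to0$ via $(\textbf{\textit{A}}_2)$ and $(\textbf{\textit{f}}_3)$. Passing to a subsequence, $u_n\rightharpoonup u$ in $X$.

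The crux is the recovery of strong convergence against the loss of compactness due to the critical term. Applying Theorem \ref{ccp} to $\{u_n\}$ yields the measure decompositions $|\partial_{x_i}u_n|^{p_i(x)}\,dx\rightharpoonup\mu_i$ and $|u_n|^{h(x)}\,dx\rightharpoonup\nu$ in the weak-$*$ sense, with $\nu=|u|^{h(x)}\,dx+\sum_{j\in J}\nu_j\delta_{x_j}$ over an at most countable family of atoms $\{x_j\}$ supported in the critical set $\{x:h(x)=p_m^{*}(x)\}$, and a reverse Sobolev-type inequality linking $\nu_j$ to the gradient atoms $\mu_{i,j}$. To exclude concentration at each $x_j$, I would test $J'_{\lambda,\beta}(u_n)$ against $\varphi_{\varepsilon,j}u_n$ with $\varphi_{\varepsilon,j}$ a smooth cutoff supported near $x_j$, and let first $n\to\infty$ and then $\varepsilon\to0$; this gives an algebraic identity between the atoms which, combined with the CCP inequality, forces a quantized lower bound $\nu_j\geq c_{*}$ whenever $\nu_j\neq0$.

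The hardest point — and, I expect, the precise role of hypothesis $(\textbf{\textit{H}})$ — is showing that the mountain pass level $c$ lies strictly below the concentration threshold (of the form $c_{*}/N$, in the spirit of Brezis--Nirenberg), typically by evaluating the minimax definition on a truncated, rescaled extremal sequence for the critical embedding of Theorem \ref{theo23}. This rules out any atom with $\nu_j\neq0$, so $J=\emptyset$ and $|u_n|^{h(x)}\to|u|^{h(x)}$ in $L^1(\Omega)$. The strict monotonicity $(\textbf{\textit{A}}_3)$ then supplies an $(S_+)$-type property which, together with $u_n\rightharpoonup u$ and the strong convergence of the critical term, upgrades weak to strong convergence $u_n\to u$ in $X$. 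Passing to the limit in $J'_{\lambda,\beta}(u_n)\to0$ yields $J'_{\lambda,\beta}(u)=0$, and since $J_{\lambda,\beta}(u)=c>0$ we have $u\not\equiv0$, delivering the desired nontrivial weak solution of \eqref{e1.1}.
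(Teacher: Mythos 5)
Your outline reproduces the paper's strategy faithfully: verify the mountain pass geometry for $E_{\lambda,\beta}$, establish boundedness of every (PS)-sequence from $(\textbf{\textit{A}}_2)$ and $(\textbf{\textit{f}}_2)$ (the paper's Lemma \ref{lembounded}), invoke the concentration--compactness decomposition of Theorem \ref{ccp} and localize by testing $\langle E'_{\lambda,\beta}(u_n),\psi_{j,\varepsilon}u_n\rangle$ to force a quantized lower bound on any atom $\nu_j$, and finally upgrade $u_n\rightharpoonup u$ to $u_n\to u$ in $X$ via a monotonicity/$(S_+)$-type argument (the paper runs this through $(\textbf{\textit{A}}_2)$, $(\textbf{\textit{A}}_3)$ and the elementary inequalities for $|\xi|^{\gamma-2}\xi$). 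This is precisely the structure of Lemma \ref{lemconv}.

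The genuine gap in your proposal is the step you yourself flag as ``the hardest point'': you never establish that the mountain pass level $C_{\lambda,\beta}$ lies strictly below the concentration threshold that Lemma \ref{lemconv} requires, and without that, the sequence of atoms need not be empty, the $(PS)_{C_{\lambda,\beta}}$ condition is unverified, and the Mountain Pass Theorem cannot be invoked. Your conjecture that hypothesis $(\textbf{\textit{H}})$ supplies this estimate is also off: $(\textbf{\textit{H}})$ only fixes the exponent ordering $p_M^+<r(x)<h(x)\leq p_m^{*}(x)$ and ensures $\mathcal{A}\neq\emptyset$; it does not bound $C_{\lambda,\beta}$, and certainly not uniformly over all $\lambda\in\mathbb{R}$ and $\beta>0$ as the theorem claims. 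It is worth noting, however, that the paper's own proof of Theorem \ref{thm1} also does not carry out this verification: it checks $(\mathcal{I}_1)$ and $(\mathcal{I}_2)$, writes the minimax characterization of $C_{\lambda,\beta}$, and stops, never relating $C_{\lambda,\beta}$ to the level condition of Lemma \ref{lemconv}. So your instinct that this estimate is required, and is missing, is sound — it is a weak point of the published argument, not a defect peculiar to your outline; but as written your proposal does not close it either.
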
\par 
The second theorem concerns the case  $a_i(x,\xi):=|\xi|^{p_i(x)-2}\xi$, for all $i\in\{1,\dots,N\}$.
\begin{theorem} \label{thm2}
 Suppose that assumptions  {\rm ($\textbf{\textit{A}}_1$)}-{\rm ($\textbf{\textit{A}}_3$)}, {\rm ($\textbf{\textit{f}}_1$)-($\textbf{\textit{f}}_4$)}, and {\rm ($\textbf{\textit{H}}$)} hold.
	Then  for all  $\lambda\in\mathbb{R}$ and $\beta> 0$, problem \eqref{e1.1}
	has infinitely many weak solutions.
\end{theorem}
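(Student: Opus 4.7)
The plan is to derive Theorem \ref{thm2} from a symmetric minimax scheme, most naturally Bartsch's Fountain Theorem (or its dual variant \emph{à la} Bartsch--Willem). In the special case $a_i(x,\xi)=|\xi|^{p_i(x)-2}\xi$, problem \eqref{e1.1} is the Euler--Lagrange equation of the $C^1$ functional
\[
J_{\lambda,\beta}(u)=\sum_{i=1}^{N}\int_\Omega\frac{|\partial_{x_i}u|^{p_i(x)}}{p_i(x)}\,dx+\lambda\int_\Omega\frac{|u|^{r(x)}}{r(x)}\,dx-\int_\Omega\frac{|u|^{h(x)}}{h(x)}\,dx-\beta\int_\Omega F(x,u)\,dx
\]
on the separable reflexive Banach space $X:=W_0^{1,\overrightarrow{p}(x)}(\Omega)$. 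The $C^1$--regularity of the critical term relies on the embedding supplied by Theorem \ref{theo23}, and the essential new structural ingredient compared to Theorem \ref{thm1} is that the symmetry $f(x,-t)=-f(x,t)$ assumed in $(\textbf{\textit{f}}_4)$ renders $J_{\lambda,\beta}$ \emph{even}, which is what the symmetric scheme requires.

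Fix a Schauder basis $\{e_j\}$ of $X$ and set $Y_k=\bigoplus_{j\le k}\mathbb{R}e_j$, $Z_k=\overline{\bigoplus_{j\ge k}\mathbb{R}e_j}$. I would then verify the two standard geometric conditions. On the finite-dimensional part $Y_k$, the bound $\max_{u\in Y_k,\,\|u\|=R_k}J_{\lambda,\beta}(u)\le 0$ for a suitable $R_k$ follows from the equivalence of norms on $Y_k$ together with the dominance of $|u|^{h(x)}$ guaranteed by $(\textbf{\textit{H}})$. On the complement $Z_k$, the coercivity $\inf_{u\in Z_k,\,\|u\|=\rho_k}J_{\lambda,\beta}(u)\to+\infty$ (or the analogous bound demanded by the dual version) is obtained by splitting $h$ into its subcritical and critical parts: on the subcritical piece one uses that $\beta_k:=\sup\{\|u\|_{q(x)}:u\in Z_k,\,\|u\|=1\}\to 0$ for any subcritical $q$, while on the critical piece an $\varepsilon$-estimate together with the critical embedding in Theorem \ref{theo23}, combined with the growth from $(\textbf{\textit{A}}_2)$ and the subcritical perturbation hypotheses $(\textbf{\textit{f}}_1)$--$(\textbf{\textit{f}}_3)$, closes the estimate once $\rho_k$ is tuned correctly.

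The main obstacle is verifying a local Palais--Smale condition $(\mathrm{PS})_c$ for $c$ below a computable threshold, and this is precisely where Theorem \ref{ccp} is indispensable. A $(\mathrm{PS})_c$ sequence $\{u_n\}$ is first shown to be bounded via the standard combination of $(\textbf{\textit{A}}_2)$, $(\textbf{\textit{f}}_2)$, and the Ambrosetti--Rabinowitz--type growth encoded in $(\textbf{\textit{f}}_3)$; hence $u_n\rightharpoonup u$ in $X$ along a subsequence. Theorem \ref{ccp} then yields two nonnegative finite measures $\mu,\nu$ on $\overline{\Omega}$ with at most countable atomic part $\{(x_j,\mu_j,\nu_j)\}_{j\in J}$ supported in $\mathcal{A}:=\{x\in\overline{\Omega}:h(x)=p_m^*(x)\}$ and linked by the reverse Sobolev inequality $S\,\nu_j^{1/p_m^*(x_j)}\le\mu_j^{1/p_m(x_j)}$. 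Testing $\langle J_{\lambda,\beta}'(u_n),\psi_{\varepsilon,j}u_n\rangle\to 0$ against cut-offs $\psi_{\varepsilon,j}$ localized at each atom, and letting first $n\to\infty$ then $\varepsilon\to 0$, gives the dichotomy $\nu_j=0$ or $\nu_j\ge S^{N/p_m(x_j)}$, so each non-trivial atom carries energy at least a fixed positive constant $c^*$ expressible in terms of $p_M^+$, $h^-$, $p_m^-$, $S$; the assumption $c<c^*$ then rules out concentration and forces $u_n\to u$ strongly.

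Finally, since the ascending Fountain levels may exceed $c^*$, I would pass to the dual Fountain Theorem, which instead produces a sequence of negative critical values $d_k\to 0^-$ lying automatically in the range $(-\infty,c^*)$ where the local $(\mathrm{PS})_c$ just established is valid. Evenness of $J_{\lambda,\beta}$ guarantees that these critical values correspond to infinitely many distinct pairs of non-trivial weak solutions to \eqref{e1.1}, establishing Theorem \ref{thm2}.
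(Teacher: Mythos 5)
Your proposal departs from the paper in a meaningful way: the paper proves Theorem \ref{thm2} by applying Rabinowitz's $\mathbb{Z}_2$--symmetric Mountain Pass Theorem (Theorem \ref{SPMT}) directly to $E_{\lambda,\beta}$, reusing Lemma \ref{lembounded} and Lemma \ref{lemconv} for the Palais--Smale analysis and then verifying only the geometric condition $(\mathcal{I}_2')$, namely that for every finite-dimensional subspace $X_1\subset X$ the set $\{u\in X_1: E_{\lambda,\beta}(u)\ge 0\}$ is bounded. That verification rests on the lower bound $F(x,\xi)\ge C_1|\xi|^{\theta_\lambda}-C_2$ from $(\textbf{\textit{f}}_2)$ and the equivalence of norms on $X_1$ (so that the term $-\frac{1}{h^+}\int_\Omega|u|^{h^-}\,dx$ dominates). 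Your $Y_k$-step plays essentially the same role, so the finite-dimensional geometry is in effect the same estimate dressed in Fountain language.

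The genuine gap in your proposal is the final substitution of the \emph{dual} Fountain Theorem. You correctly observe that the ascending Fountain levels escape to $+\infty$, and that Lemma \ref{lemconv} only yields $(\mathrm{PS})_c$ below a finite threshold $c^*$; switching to the dual Fountain Theorem is a natural reflex. But the dual Fountain geometry requires $b_k:=\max_{u\in Y_k,\,\|u\|=r_k}E_{\lambda,\beta}(u)<0$ for some $r_k\to 0^+$, i.e.\ the functional must take negative values on arbitrarily small spheres in finite-dimensional subspaces. Under the standing hypothesis $(\textbf{\textit{H}})$ one has $p_M^+<r(x)<h(x)\le p_m^*(x)$, and by $(\textbf{\textit{f}}_3)$ one also has $F(x,\xi)=o(|\xi|^{p_M^+})$ as $\xi\to 0$; combined with $(\textbf{\textit{A}}_2)$ this gives, exactly as in the proof of Theorem \ref{thm1}, that $E_{\lambda,\beta}(u)\ge c\|u\|_{\overrightarrow{p}(x)}^{p_M^+}-o(\|u\|_{\overrightarrow{p}(x)}^{p_M^+})>0$ for all sufficiently small $u\ne 0$. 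There is no concave or sublinear term in \eqref{e1.1} (all exponents exceed $p_M^+$), so the functional is strictly positive in a punctured neighborhood of the origin and the dual Fountain geometry cannot hold. Thus your proposed fix fails; the dual Fountain Theorem is the right tool for concave--convex problems, not for this purely superlinear one.

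If you wish to keep a Fountain-type route, you would have to produce \emph{a priori} bounds showing that the relevant minimax levels $c_k$ stay below the threshold of Lemma \ref{lemconv}, or else restrict parameters so that this happens; this is a nontrivial estimate and is not addressed in your proposal. The paper itself sidesteps the issue by applying Theorem \ref{SPMT} as a black box once $(\mathcal{I}_1)$ and $(\mathcal{I}_2')$ are verified, invoking Lemmas \ref{lembounded}--\ref{lemconv} for compactness; if you share the worry that the critical levels produced there may exceed $c^*$, that worry must be resolved by an energy estimate along the minimax levels, not by switching to the dual Fountain scheme.
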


We give some examples, interesting from the mathematical point of view and with a wide range of applications in physics and other fields, that fall within the general class of equations which we shall  study in this paper, with adequate assumptions on functions $a_{i}$.
\begin{example} \label{example11}
	Let $a_i(x,\xi):=|\xi|^{p_i(x)-2}\xi.$
	Then  $A_i(x,\xi)=\frac{1}{p_i(x)}|\xi|^{p_i(x)}$ and  $a_i$ satisfy the assumptions $(\textbf{\textit{A}}_1),(\textbf{\textit{A}}_2)$ and $(\textbf{\textit{A}}_3)$ for all $i\in\{1,\dots,N\}$. Hence  equation \eqref{e1.1} becomes	
	\begin{equation}\label{example1.1}
		\begin{cases}
			\displaystyle	-\Delta_{\overrightarrow{p}(x)}(u)  + \lambda | u|^{r(x)-2}u=| u|^{h(x)-2}u + \beta f(x,u) \  \text { in }\Omega, \\
			u=0 \   \text{on }\partial\Omega.
		\end{cases}
	\end{equation}	
	The operator $ \Delta_{\overrightarrow{p}(x)}(u):= \sum_{i=1}^{N}\partial_{ x_{i}}\Big(\big|\partial_{ x_{i}}u\big|^{p_{i}(x)-2} \partial_{ x_{i}}u\Big)$
	is the so-called the $\overrightarrow{p}(x)$-Laplacian operator,
	when $p_i(x)=p(x)$ for all  $i=1,2,\dots,N.$
	The operator $\Delta_{\overrightarrow{p}(x)}u$ is the $p(x)$-Laplacian operator, i.e., $\Delta_{p(x)}u = \mathrm{div}(|\nabla u|^{p(x)-2}\nabla u)$, which coincides with the standard $p$-Laplacian when $p(x)=p$, and with the Laplacian when $p(x)=2$.
\end{example}
\begin{example} \label{example12}
	Let $a_i(x,\xi):=(1+|\xi|^2)^{\frac{p_i(x)-2}{2}}\xi.$
	Then  $A_i(x,\xi)=\frac{1}{p_i(x)}\big( (1+|\xi|^2)^{\frac{p_i(x)}{2}} -1\big)$ and  $a_i$ satisfy the assumptions $(\textbf{\textit{A}}_1),(\textbf{\textit{A}}_2)$ and $(\textbf{\textit{A}}_3)$ for all $i\in\{1,\dots,N\}$. Hence equation \eqref{e1.1} becomes	
	\begin{equation}\label{example1.2}
		\begin{cases}
			\displaystyle	-\sum_{i=1}^N\partial_{x_i}\bigg(\big(1+|\partial_{x_i}u
			|^2\big)^{(p_i(x)-2)/2}\partial_{x_i}u \bigg)  + \lambda | u|^{r(x)-2}u
			\\\qquad =| u|^{h(x)-2}u + \beta f(x,u) \text { in }\Omega, \\
			u=0 \   \text{on }\partial\Omega.
		\end{cases}
	\end{equation}		
	The operator $\sum_{i=1}^N\partial_{x_i}\bigg(\big(1+|\partial_{x_i}u
	|^2\big)^{(p_i(x)-2)/2}\partial_{x_i}u \bigg)$ is
	the
	so-called anisotropic variable mean curvature operator.
\end{example}

The paper is structured as follows: In Section \ref{sec 2} we give some preliminary properties of the variable exponent spaces. In Section \ref{sec 3} we establish an extension of the Lions concentration-compactness principle for anisotropic variable exponent Sobolev spaces. In Section \ref{sec 4} we study a class of nonlinear anisotropic elliptic equations with critical growth and establish the existence and multiplicity of solutions. In Section \ref{sec 5} we prove the main results (Theorems \ref{thm1} and  \ref{thm2}).		

\section{Functional framework} \label{sec 2}
In this section, we establish the notation and compile essential foundational results concerning variable exponent function spaces. These results will be recurrently employed in subsequent sections of the paper. \par
Throughout this paper, we assume that $\Omega$ is a bounded Lipschitz domain in $\mathbb{R}^N$ $(N\geq 2)$. We introduce the set $C_{+}(\overline{\Omega})$,  defined as
$$
C_{+}(\overline{\Omega})= \{ p: p \in C(\overline{\Omega}) , p(x)> 1 \
\text{ for a.e. } x \in \overline{\Omega} \}.
$$
 We denote by $C_{+}^{\text{log}}(\overline{\Omega})$  the set of functions $p\in C_{+}(\overline{\Omega})$ that satisfy the log-Holder continuity condition
$$\sup \left\{ |p(x)-p(y)| \log \frac{1}{|x-y|} : x,y\in \overline{\Omega}, 0< |x-y|<\frac{1}{2} \right\} < \infty.$$
For any $ p \in C_{+}(\overline{\Omega}),$ we define $p^{+}= \sup_{x\in \Omega}p(x)$ and $p^{-}= \inf_{x\in \Omega}p(x)$. We also introduce the variable exponent Lebesgue space as
$$L^{p(x)}(\Omega)= \{ u : u \text{ is a measurable real-valued function and }  \rho_{p}(u)  <\infty \},$$
where the functional $\rho_{p}:L^{p(x)}(\Omega) \to \mathbb{R}$ is defined as $\rho_{p}(u):= \int_{\Omega}|u(x)|^{p(x)}dx$.  We endow the space $L_{}^{p(x)}(\Omega)$ with the Luxemburg norm
$$
\|u\|_{L_{ }^{p(x)}(\Omega)}: = \inf \left\{ \tau >0: \rho_{p}\bigg(\frac{\left|u(x)\right|}{\tau}\bigg) \leq 1 \right \}.
$$
This norm results in $(L^{p(x)}(\Omega),\|u\|_{L^{p(x)}(\Omega)} )$ being a separable and reflexive Banach space (see, e.g., Kov\'a\v cik and R\'akosn\'{\i}k \cite[Theorem 2.5, Corollary 2.7]{KR}).  Let us now revisit some fundamental properties associated with Lebesgue spaces.
\begin{proposition}[Kov\'a\v cik and R\'akosn\'{\i}k {\cite[Theorem 2.8]{KR}}] \label{prop21}
	Consider variable exponents $q$ and $h$ belonging to the class $C_{+}(\overline{\Omega})$ and satisfying the condition $q\leq h$ within the domain $\Omega$. Under these conditions, the embedding $L^{h(x)}(\Omega )\hookrightarrow L^{q(x)}(\Omega )$ is continuous.
\end{proposition}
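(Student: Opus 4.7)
The plan is to derive both the set-theoretic inclusion $L^{h(x)}(\Omega)\subset L^{q(x)}(\Omega)$ and the norm estimate directly from the definition of the Luxemburg norm, using the elementary pointwise inequality
$$
a^{q(x)}\leq 1+a^{h(x)}\qquad (a\geq 0),
$$
which follows by splitting the cases $a\leq 1$ and $a>1$ and invoking $q(x)\leq h(x)$: in the first case $a^{q(x)}\leq 1$, in the second $a^{q(x)}\leq a^{h(x)}$.

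First I would establish the inclusion. For any measurable $u$, integrating the pointwise bound with $a=|u(x)|$ yields $\rho_q(u)\leq |\Omega|+\rho_h(u)$, which is finite since $\Omega$ is bounded and $u\in L^{h(x)}(\Omega)$. This already gives $L^{h(x)}(\Omega)\subset L^{q(x)}(\Omega)$ as sets.

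Next, for the continuity estimate, set $\tau=\|u\|_{L^{h(x)}(\Omega)}$, so that by the unit-ball property of the Luxemburg norm one has $\rho_h(u/\tau)\leq 1$. Applying the same pointwise inequality to $a=|u(x)|/\tau$ and integrating gives $\rho_q(u/\tau)\leq |\Omega|+1$. To convert this into the bound $\rho_q(u/\sigma)\leq 1$ needed to control the $L^{q(x)}$-norm, I would rescale: writing $\sigma=c\tau$ with $c\geq 1$, the inequality $c^{-q(x)}\leq c^{-q^-}$ yields $\rho_q(u/\sigma)\leq c^{-q^-}\rho_q(u/\tau)$. Choosing $c=(|\Omega|+1)^{1/q^-}$ then produces
$$
\|u\|_{L^{q(x)}(\Omega)}\leq (|\Omega|+1)^{1/q^-}\,\|u\|_{L^{h(x)}(\Omega)},
$$
which is the desired continuity constant for the embedding.

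The only delicate point is the final rescaling, where one must use $q^-\geq 1$ (guaranteed since $q\in C_+(\overline{\Omega})$) to pass from the variable-exponent modular bound $\rho_q(u/\tau)\leq |\Omega|+1$ to a true norm estimate; apart from this, the argument is elementary and relies only on the boundedness $|\Omega|<\infty$ together with the definition of the Luxemburg norm. No deeper tools such as Hölder's inequality in variable exponents are required for this particular embedding.
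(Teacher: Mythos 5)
Your argument is correct, and it fills in a proof that the paper does not give: Proposition~\ref{prop21} is stated with a citation to Kov\'a\v cik--R\'akosn\'{\i}k rather than proved in the text, so there is no in-paper proof to compare against. Your route -- the pointwise bound $a^{q(x)}\le 1+a^{h(x)}$, integration over the bounded $\Omega$ to control the modular, and a rescaling $\sigma=c\tau$ with $c=(|\Omega|+1)^{1/q^-}$ to convert $\rho_q(u/\tau)\le |\Omega|+1$ into $\rho_q(u/\sigma)\le 1$ -- is the standard direct argument and is cleaner than the alternative proof via the variable-exponent H\"older inequality applied to $|u|^{q(x)}\cdot 1$ with exponents $h(x)/q(x)$ and its conjugate, which requires estimating $\|1\|_{L^{s'(x)}(\Omega)}$ and relating $\bigl\||u|^{q(x)}\bigr\|_{L^{h(x)/q(x)}(\Omega)}$ back to $\|u\|_{L^{h(x)}(\Omega)}$. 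One small overstatement: the rescaling step only needs $q^->0$ (so that $c\ge 1$ and $c^{-q(x)}\le c^{-q^-}$ are valid), not $q^-\ge 1$; of course $q\in C_+(\overline\Omega)$ together with compactness of $\overline\Omega$ gives $q^->1$ anyway, so nothing breaks, but it is worth stating the hypothesis you actually use. You also implicitly assume $u\ne 0$ when writing $\rho_h(u/\tau)\le 1$ with $\tau=\|u\|_{L^{h(x)}(\Omega)}$; the case $u=0$ is trivial and should be mentioned for completeness.
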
\par 
Furthermore, the following H\"older-type inequality holds for all   $u\in
L^{p(x)}({\Omega})$ and $v\in L^{p'(x)}({\Omega})$
\begin{equation}\label{Hol}
		\begin{aligned}
	\Big| \int_{\Omega}u(x)v(x)dx\Big|
	&\leq \Big(\frac{1}{p^{-}}+ \frac{1}{(p')^{-}}\Big)\| u\|_{L^{p(x)}(\Omega)}\big\| v\big\| _{L^{p'(x)}(\Omega)}\\
	&\leq 2\big\| u\big\|_{L^{p(x)}(\Omega)}\big\| v\big\| _{L^{p'(x)}(\Omega)},
		\end{aligned}
\end{equation}
where $L^{p'(x)}({\Omega})$ is  the conjugate space (or the topological dual space) of $L^{p(x)}({\Omega})$, obtained by conjugating the exponent pointwise, that is,  $\frac{1}{p(x)}+\frac{1}{p'(x)}=1$ (see, e.g., Kov\'a\v cik and R\'akosn\'{\i}k \cite[Theorem 2.1, Corollary 2.7]{KR}).
Moreover, if  $u\in L^{p(x)}({\Omega})$ and $p<\infty$, then  we have the following properties (see for example Fan and Zhao \cite[Theorem 1.3, Theorem 1.4]{fanz}):
\begin{equation}\label{L23}
	\|u\|_{L^{p(x)}({\Omega})}<1\;(=1;\,>1)\; \    \text{ if and only if } \   \; \rho_p (u) <1\;(=1;\,>1),
\end{equation}
\begin{equation}\label{L24}
	\text{ if }\   	\|u\|_{L^{p(x)}({\Omega})}>1\; \   \text{ then } \   \;
	\|u\|_{L^{p(x)}({\Omega})}^{p^-}\leq \rho_p (u)
	\leq\|u\|_{L^{p(x)}({\Omega})}^{p^+},
\end{equation}
\begin{equation}\label{L25}
	\text{ if }\   	\|u\|_{L^{p(x)}({\Omega})}<1\; \   \text{ then } \    \;
	\|u\|_{L^{p(x)}({\Omega})}^{p^+}\leq \rho_p (u)
	\leq\|u\|_{L^{p(x)}({\Omega})}^{p^-}.
\end{equation}
As a result, we get
\begin{equation}\label{L225}
	\| u\|_{L^{p(x)}(\Omega)}^{p^{-}}-1
	\leq \rho_p (u)\leq \| u\|_{L^{p(x)}(\Omega)}^{p^{+}}+1, \   \text{ for all }  u \in L^{p(x)}({\Omega}).
\end{equation}
This leads to an important result that norm convergence and modular convergence are equivalent.
\begin{equation}\label{L26}
	\|u\|_{L^{p(x)}({\Omega})}\to 0\;(\to\infty)\; \   \text{ if and only if } \   \; \rho_p (u) \to 0\; (\to\infty).
\end{equation}
\begin{remark}\label{remark 1}
	The above properties of the modular and  norm hold for all \\ $L_{\mu}^{p(x)}(\Omega):=\{u: u \text{ is }  \mu-\text{measurable real-valued function and} \\  \displaystyle \int_{\Omega}|u(x)|^{p(x)}d\mu<\infty\},$
	where $\Omega \subset \mathbb{R}^N (N\geq 2)$ is a bounded open subset, $\mu$ is a measure on $\Omega$, and $p\in C_{+}(\overline{\Omega})$ (for more details see, e.g., Diening et al. {\cite[Chapter 3]{Dien}}).
\end{remark}\par

Now, let us turn our attention to the (isotropic) Sobolev space with a variable exponent. This space, denoted as \(W^{1,p(x)}(\Omega)\), consists of functions \(u\) belonging to \(L^{p(x)}(\Omega)\) whose partial derivatives \(\partial_{x_i}u\), for \(i \in \{1,\dots,N\}\), are also in \(L^{p(x)}(\Omega)\) in the weak sense. The norm for this space is given by
\[
\|u\|_{1,p(x)} := \|u\|_{W^{1,p(x)}(\Omega)} := \|u\|_{L^{p(x)}(\Omega)} + \|\nabla u\|_{L^{p(x)}(\Omega)}   \text{ for all } u\in W^{1,p(x)}(\Omega ),
\]
where $\nabla u$ represents the gradient of $u$.  The Sobolev space with zero boundary values, denoted as $W_0^{1,p(x)}(\Omega )$, is defined as the closure of the set of smooth functions with compact support,  $C_0^{\infty }(\Omega )$, within $W^{1,p(x)}(\Omega )$. Its norm is given by
\[
\| u\|_{p(x)} = \| \nabla u\|_{L^{p(x)}(\Omega)} \text{ for all } u\in W_0^{1,p(x)}(\Omega ).
\]
It is worth noting that both $W^{1,p(x)}(\Omega)$ and $W_0^{1,p(x)}(\Omega)$ are separable and reflexive Banach spaces, as established in Kov\'a\v cik and R\'akosn\'{\i}k \cite[Theorem 3.1]{KR}.
Additionally, we introduce the concept of the critical Sobolev exponent, denoted as \(p^{\ast }(x)\), which is defined as follows
\[
p^{\ast }(x) = \begin{cases}
	\frac{Np(x)}{N-p(x)} &\text{if } p(x)<N \\
	+\infty &\text{if } p(x)\geq N.
\end{cases}
\]
Now, let us highlight the crucial embeddings of the space \(W^{1,p(x)}(\Omega)\).
\begin{proposition}[Diening et al. {\cite[Theorem 8.4.2.]{Dien}}, Edmunds and Rakosnik {\cite[Theorem 1.1]{Edmu}}] \label{prop24}
	Consider $p\in C_{+}^{\log}(\overline{\Omega})$ satisfying $p^+<N$, and $h\in C(\overline{\Omega})$ with $1\leq h(x) \leq p^{\ast}(x)$ for all $x\in \overline{\Omega}$. Under these conditions, we have a continuous embedding $W^{1,p(x)}(\Omega)\hookrightarrow L^{h(x)}(\Omega)$. Furthermore, if we additionally assume $1\leq h(x)< p^{\ast}(x)$ for all $x\in \overline{\Omega}$, then this embedding is also compact.
\end{proposition}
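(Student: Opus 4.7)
The plan is to split the statement into two parts: (a) the continuous embedding $W^{1,p(x)}(\Omega)\hookrightarrow L^{p^{\ast}(x)}(\Omega)$ at the critical exponent, and (b) upgrading from critical continuity to subcritical compactness. Once (a) is available, Proposition \ref{prop21} yields $L^{p^{\ast}(x)}(\Omega)\hookrightarrow L^{h(x)}(\Omega)$ for any $h\le p^{\ast}$, so the first assertion follows by composition.

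For (a), I would first extend the exponent $p$ from $\overline{\Omega}$ to all of $\mathbb{R}^N$ as a log-H\"older continuous function preserving $p^-$ and $p^+$ (this is possible because $\overline{\Omega}$ is compact and $p\in C_{+}^{\log}(\overline{\Omega})$). Because $\Omega$ is a bounded Lipschitz domain, I would invoke a Stein-type extension operator to extend every $u\in W^{1,p(x)}(\Omega)$ to $\tilde u\in W^{1,p(x)}(\mathbb{R}^N)$ with compact support and $\|\tilde u\|_{W^{1,p(x)}(\mathbb{R}^N)}\le C\|u\|_{W^{1,p(x)}(\Omega)}$. This reduces matters to $\mathbb{R}^N$, where for $u\in C_c^\infty(\mathbb{R}^N)$ the pointwise bound $|u(x)|\le c_N\,(I_1\ast|\nabla u|)(x)$ holds, with $I_1(y)=|y|^{1-N}$ the Riesz kernel of order one. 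The key analytic input is the boundedness of the Riesz potential $I_1\colon L^{p(\cdot)}(\mathbb{R}^N)\to L^{p^{\ast}(\cdot)}(\mathbb{R}^N)$, which holds precisely under the hypotheses $p\in C_+^{\log}$ and $p^+<N$; this in turn stems from the Diening theorem on the boundedness of the Hardy--Littlewood maximal operator on $L^{p(\cdot)}$. Combining extension, the pointwise Riesz inequality, and this potential estimate yields $\|u\|_{L^{p^{\ast}(x)}(\Omega)}\le C\,\|u\|_{W^{1,p(x)}(\Omega)}$, which is the desired continuous embedding.

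For (b), assume $h(x)<p^{\ast}(x)$ pointwise on $\overline{\Omega}$. By continuity of $h$ and $p^{\ast}$ together with compactness of $\overline{\Omega}$, there exists $\varepsilon>0$ with $h(x)\le p^{\ast}(x)-\varepsilon$ on $\overline{\Omega}$. Take a bounded sequence $\{u_n\}\subset W^{1,p(x)}(\Omega)$; in particular $\{u_n\}$ is bounded in $W^{1,p^-}(\Omega)$ by Proposition \ref{prop21}, so the classical Rellich--Kondrachov theorem furnishes a subsequence with $u_n\to u$ strongly in $L^{p^-}(\Omega)$ and a.e.\ in $\Omega$. To upgrade to convergence in $L^{h(x)}(\Omega)$, decompose $\rho_h(u_n-u)=\int_\Omega|u_n-u|^{h(x)}dx$ on the sets $A_n=\{|u_n-u|\le 1\}$ and $B_n=\Omega\setminus A_n$: on $A_n$ the integrand is controlled by $|u_n-u|^{p^-}$ and vanishes by the strong $L^{p^-}$-convergence; on $B_n$ one uses $|u_n-u|^{h(x)}\le|u_n-u|^{p^{\ast}(x)-\varepsilon}$ together with the uniform $L^{p^{\ast}(x)}$-bound from (a), the generalized H\"older inequality \eqref{Hol}, and the modular--norm relations \eqref{L24}--\eqref{L25}, so that the $B_n$-contribution becomes arbitrarily small. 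Combined with \eqref{L26} this gives strong convergence in $L^{h(x)}(\Omega)$, proving compactness.

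The main obstacle is clearly part (a): the Riesz potential estimate on $L^{p(\cdot)}$ rests on the nontrivial maximal-function theorem for variable exponent spaces, and the log-H\"older hypothesis is essentially sharp for its validity. Extending both $p(\cdot)$ and $u$ to $\mathbb{R}^N$ while preserving the log-H\"older modulus and the $W^{1,p(x)}$-norm is technical but standard for Lipschitz domains. By contrast, the compactness part (b) is a soft interpolation/truncation argument once (a) is in hand. Since the statement quotes the monograph of Diening--Harjulehto--H\"ast\"o--R\r{u}\v{z}i\v{c}ka and the paper of Edmunds--R\'akosn\'\i k, the actual write-up would invoke the corresponding theorems there rather than reconstruct the maximal-function machinery from scratch.
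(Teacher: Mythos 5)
The paper does not prove Proposition~\ref{prop24}: it is stated as a quoted result, with the proof deferred to Diening et al.\ \cite[Theorem 8.4.2]{Dien} and Edmunds--R\'akosn\'{\i}k \cite[Theorem 1.1]{Edmu}. So there is no in-paper argument to compare against, and you correctly anticipate this in your last sentence. Your sketch is a faithful outline of the standard route to this theorem (log-H\"older extension of the exponent, Stein extension of $u$ across the Lipschitz boundary, the pointwise Riesz bound $|u|\lesssim I_1*|\nabla u|$, and the $L^{p(\cdot)}\to L^{p^\ast(\cdot)}$ mapping property of $I_1$ inherited from the maximal function theorem), followed by a subcritical compactness upgrade. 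That is indeed how the cited monograph structures the proof, so as a reconstruction it is on target.

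Two small points of hygiene in the compactness step. First, the bound on $A_n=\{|u_n-u|\le 1\}$ by $|u_n-u|^{p^-}$ is only valid where $h(x)\ge p^-$, which is not assumed ($h$ may dip as low as $1$ while $p^->1$). The cleaner statement is that on $A_n$ the integrand is dominated by $|u_n-u|^{h^-}$, after which one concludes either directly (if $h^-\ge p^-$) or by H\"older on the bounded domain (if $h^-<p^-$); alternatively, a Vitali/uniform-integrability argument using the uniform $L^{p^\ast(\cdot)}$-bound and $h\le p^\ast-\varepsilon$ dispenses with the case split entirely. Second, for the $B_n$ term you implicitly need $\operatorname{meas}(B_n)\to 0$, which does follow from the strong $L^{p^-}$-convergence via Chebyshev, but it is worth saying so since that is what makes the H\"older factor $\|1\|_{L^{p^\ast(\cdot)/\varepsilon}(B_n)}$ small. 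Neither issue is a structural gap; the argument goes through once these lines are tightened.
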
\par 

For a comprehensive exploration of the properties of Lebesgue-Sobolev spaces with variable exponents, we recommend that the reader consult the works  of Cruz-Uribe and Fiorenza \cite{Cruz-UribeFiorenza}, Diening et al. \cite{Dien} and Kov\'a\v cik and R\'akosn\'{\i}k \cite{KR}.\par
Now, we expand our discussion to include the anisotropic Sobolev space denoted as $W^{1,\overrightarrow{p}(x)}(\Omega)$, where $\overrightarrow{p}:\overline\Omega\to\mathbb{R}^N$ is a vector function defined as $
\overrightarrow{p}(x)=\left(p_1(x), \dots,p_N(x)\right),
$
with each component $p_i \in C_+(\overline\Omega)$ satisfying $1< p_i^-\leq p_i^+<N<\infty$ for all $i\in\{1,\dots,N\}$. Additionally, we define
$
p_m(x)=\min\{p_1(x),\dots,p_N(x)\},\   p_M(x)=\max\{p_1(x),\dots,p_N(x)\}$,
$p^+_m= \sup_{x\in \Omega}p_m(x),$ and $p^+_M= \sup_{x\in \Omega}p_M(x).$
The anisotropic variable exponent Sobolev space $W^{1,\overrightarrow{p}(x)}(\Omega)$ consists of functions $u\in L^{p_M(x)}(\Omega)$ such that $\partial_{x_i}u\in L^{p_i(x)}(\Omega)$ for all $i\in\{1,\dots,N\}$. This space can also be defined as
\begin{equation*}
	\begin{aligned}
W^{1,\overrightarrow{p}(x)}(\Omega)=\{u\in L^{1}_{loc}(\Omega) : u \in L^{p_i(x)}(\Omega) \text{ and }\partial_{x_i}u\in L^{p_i(x)}(\Omega)~ \text{for all} ~1\leq i\leq N\},
\end{aligned}
\end{equation*}
and it is equipped with the norm
$$
\|u\|_{W^{1,\overrightarrow{p}(x)}(\Omega)}:=\|u\|_{L^{p_M(x)}(\Omega)}+\sum_{i=1}^N\left\|\partial_{x_i}u \right\|_{L^{p_i(x)}(\Omega)} \text{ for all } u \in W^{1,\overrightarrow{p}(x)}(\Omega).
$$
The space $\big(W^{1,\overrightarrow{p}(x)}(\Omega),\|\cdot\|_{W^{1,\overrightarrow{p}(x)}(\Omega)}\big)$ forms a reflexive Banach space, as proven by Fan \cite[Theorems 2.1 and 2.2]{fananiso}.

The anisotropic variable exponent Sobolev space with zero boundary values $W^{1,\overrightarrow{p}(x)}_0(\Omega)$ is defined as the closure of $C_{0}^{\infty}(\Omega)$, with the norm
$
\|u\|_{\overrightarrow{p}(x)}
:=\sum_{i=1}^N\left\|\partial_{x_i}u \right\|_{L^{p_i(x)}(\Omega)}.
$
Furthermore, the space $W^{1,\overrightarrow{p}(x)}_0(\Omega)$ allows for the appropriate treatment of the existence of weak solutions for problem \eqref{e1.1} and can be considered a natural generalization of the variable exponent Sobolev space $W^{1, p(x)}_0(\Omega)$. On the other hand, the space $W^{1,\overrightarrow{p}(x)}_0(\Omega)$ can also be regarded as a natural generalization of the classical anisotropic Sobolev space $W^{1,\overrightarrow{p}}_0(\Omega)$, where $\overrightarrow{p}$ is the constant vector $(p_1,\dots,p_N)$.\par
In the sequel, we shall present a revised version of the critical Sobolev embedding theorem tailored to anisotropic variable exponent Sobolev spaces.
\begin{theorem} \label{theo23}
	Let $p_i \in C_{+}(\overline{\Omega})$ for all $i\in \{1,\dots,N\}$, with $p_{m}\in C_{+}^{\log}(\overline{\Omega})$ such that $p_m^{+}\leq N$. Suppose that $h\in C(\overline{\Omega})$ satisfies the condition
	$ 1\leq h(x) \leq p_m^{\ast}(x) \text{ for all } x\in \overline{\Omega}.$	
	Then, there exists a continuous embedding
	$W^{1,\overrightarrow{p}(x)}(\Omega)\hookrightarrow L^{h(x)}(\Omega).$
	If in addition, we assume that $1\leq h(x)< p_m^{\ast}(x)$ for all $x\in \overline{\Omega}$, then this embedding is also compact.
\end{theorem}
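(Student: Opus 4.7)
The plan is to reduce the anisotropic embedding to the known isotropic critical Sobolev embedding of Proposition \ref{prop24}. The key observation is that $W^{1,\overrightarrow{p}(x)}(\Omega)$ embeds continuously into the isotropic space $W^{1,p_m(x)}(\Omega)$ associated with the pointwise minimum exponent, after which Proposition \ref{prop24} can be invoked directly.

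First I would establish the continuous embedding $W^{1,\overrightarrow{p}(x)}(\Omega) \hookrightarrow W^{1,p_m(x)}(\Omega)$. Since $p_m(x)\leq p_M(x)$ and $p_m(x)\leq p_i(x)$ for every $x\in\overline{\Omega}$ and $i\in\{1,\dots,N\}$, Proposition \ref{prop21} supplies continuous embeddings $L^{p_M(x)}(\Omega)\hookrightarrow L^{p_m(x)}(\Omega)$ and $L^{p_i(x)}(\Omega)\hookrightarrow L^{p_m(x)}(\Omega)$. For any $u\in W^{1,\overrightarrow{p}(x)}(\Omega)$ this yields $u\in L^{p_m(x)}(\Omega)$, while the pointwise bound $|\nabla u(x)|\leq\sum_{i=1}^N|\partial_{x_i}u(x)|$ combined with the triangle inequality for the Luxemburg norm gives
\begin{equation*}
\|\nabla u\|_{L^{p_m(x)}(\Omega)} \leq \sum_{i=1}^N \|\partial_{x_i}u\|_{L^{p_m(x)}(\Omega)} \leq C\sum_{i=1}^N \|\partial_{x_i}u\|_{L^{p_i(x)}(\Omega)},
\end{equation*}
for a constant $C>0$ depending only on the embedding constants. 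Together with the bound on $\|u\|_{L^{p_m(x)}(\Omega)}$ coming from $\|u\|_{L^{p_M(x)}(\Omega)}$, this produces the required continuous embedding into $W^{1,p_m(x)}(\Omega)$.

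Next, since $p_m\in C_+^{\log}(\overline{\Omega})$ with $p_m^+\leq N$, I would apply Proposition \ref{prop24} to the isotropic Sobolev space $W^{1,p_m(x)}(\Omega)$, obtaining the continuous embedding $W^{1,p_m(x)}(\Omega)\hookrightarrow L^{h(x)}(\Omega)$ for every $h\in C(\overline{\Omega})$ with $1\leq h(x)\leq p_m^\ast(x)$, and a compact embedding if the inequality is strict. Composing the two embeddings settles both conclusions of the theorem.

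The main technical subtlety is the borderline situation $p_m^+=N$, since Proposition \ref{prop24} is stated under the strict condition $p^+<N$. In that case one decomposes $\overline{\Omega}$ according to whether $p_m(x)<N$ or $p_m(x)=N$: on the former region Proposition \ref{prop24} applies after restricting to a compact subdomain where $p_m$ stays uniformly below $N$, while on the latter set one has $p_m^\ast(x)=+\infty$ and the embedding into $L^{h(x)}$ for any continuous $h$ reduces to the standard Sobolev/Trudinger-type embedding for $W^{1,N}$. I expect this edge case to be the only nonroutine point; the bulk of the argument is the clean monotonicity reduction combined with Proposition \ref{prop24}.
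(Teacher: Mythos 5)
Your approach is essentially identical to the paper's: reduce to the isotropic space $W^{1,p_m(x)}(\Omega)$ via the monotonicity embedding of Proposition \ref{prop21}, then invoke the critical embedding of Proposition \ref{prop24} and compose; the paper's proof does precisely this. Your remark about the borderline case $p_m^+=N$ is a valid observation and in fact flags a small imprecision in the paper itself: Proposition \ref{prop24} as stated requires the strict inequality $p^+<N$, yet Theorem \ref{theo23} is phrased under $p_m^+\leq N$ and the paper applies Proposition \ref{prop24} without further comment. Since the rest of the paper (e.g., Theorem \ref{ccp}) anyway assumes $\sup_{\Omega}p_i(x)<N$ for every $i$, the borderline case is never actually used; nonetheless the theorem as written either needs "$p_m^+<N$" or an argument of the sort you sketch for the $p_m(x)=N$ region, and your proposal is the more careful of the two.
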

\begin{proof}
	Let $u\in W_{}^{1,\overrightarrow{p}(x)}(\Omega)$. According to Proposition \ref{prop21}, we can conclude that $u\in W^{1,p_m(x)}(\Omega)$. Since $h(x)\leq p_m^{\ast}(x)$ for all $x\in \overline{\Omega}$, Proposition \ref{prop24} guarantees the existence of a positive constant $c>0$ such that
	\begin{equation}\label{eq2.1}
		\|u\|_{L^{h(x)}(\Omega)} \leq  c\Big(
		\|u\|_{L^{p_m(x)}(\Omega)}  + \sum_{i=1}^{N}\|\partial_{x_i}u\|_{L^{p_m(x)}(\Omega)} 	\Big).
	\end{equation}
	Since $p_m \leq p_i \leq p_M$ holds for all $i\in \{1,\dots,N\}$, we can again apply Proposition \ref{prop21} to find positive constants $c_i$ such that
	\begin{equation}\label{eq2.2}
		\|u\|_{L^{p_m(x)}(\Omega)} \leq 	c_0\|u\|_{L^{p_M(x)}(\Omega)}~~~~ \text{ and }~~~~
		\|\partial_{x_i}u\|_{L^{p_m(x)}(\Omega)} \leq
		c_i\|\partial_{x_i}u\|_{L^{p_i(x)}(\Omega)},
	\end{equation}
	for all  $i \in \{1,\dots,N\}$.  By combining \eqref{eq2.1} with \eqref{eq2.2}, we obtain
	$\|u\|_{L^{h(x)}(\Omega)} \leq c \|u\|_{W^{1, \overrightarrow{p}(x)}(\Omega)}.$
	Since Proposition \ref{prop24} establishes that the embedding
	$$W^{1, p_m(x)}(\Omega)\hookrightarrow L^{h(x)}(\Omega),$$
	 is compact if $1\leq h(x)< p_m^{\ast}(x)$ for all $x\in \Omega$, we can conclude that the embedding $W^{1, \overrightarrow{p}(x)}(\Omega)\hookrightarrow L^{h(x)}(\Omega)$ is both continuous and compact, thus completing the proof of Theorem \ref{theo23}. 
\end{proof}
\begin{remark}\label{remark 2}
	$1)$ The conclusions of Theorem \ref{theo23} remain valid in a more general context. Specifically, one can extend theorem's applicability by replacing the "critical exponent" $p_{m}^{\ast}(x)$ with the function $\widehat{p}_{m,M}^{\ast }(x)=\{p_{m}^{\ast}(x),p_{M}(x)\}$.\par
	2) It is worth noting that when $p_1(x)=p_2(x)=\dots= p_N(x)=p(x)$ and $p(x)<N$ holds for all $x\in \overline{\Omega}$, the "critical exponent" $p_m^{\ast}(x)$ in Theorem \ref{theo23} coincides with the "critical exponent" $p^{\ast}(x)$ for $W^{1,p(x)}(\Omega)$, as can be seen in Proposition \ref{prop24}.\par
	3) Ji \cite{ji} conducted a study of anisotropic equations in the subcritical case, using the "critical exponent" $\Big(\textbf{P}_{-}^{-} \Big)^{\ast}$, where $\textbf{P}_{-}^{-}=\inf\{p_1^{-},p_2^{-},\dots,p_N^{-}\}$. Our Theorem \ref{theo23} improves upon the results obtained in Ji \cite{ji} by replacing the critical exponent $\Big(\textbf{P}_{-}^{-} \Big)^{\ast}$ with $p_m^{\ast}(x)$.
\end{remark}
\begin{definition}
	Let $E:X\to \mathbb{R}$ be a $C^1$ function defined on a real Banach space $X$. A sequence $\{u_n\}$ is termed a Palais-Smale sequence (abbreviated as (PS)-sequence) on $X$ if it satisfies the following conditions:
	
	1. $E(u_n)$ is bounded.\par
	2. $E'(u_n)\to 0$ in the dual space $X'$.
	
	If, in addition to the above conditions, $E(u_n)$ converges to a finite value $c\in \mathbb{R}$ as $n$ tends to infinity, then the (PS)-sequence is referred to as a $(PS)_c$-sequence.
	
	Furthermore, if every $(PS)_c$-sequence for the function $E$ possesses a subsequence that converges strongly in $X$, then we say that $E$ satisfies the Palais-Smale condition at level $c$ (or $E$ is $(PS)_c$, for brevity).
\end{definition}\par 
 We shall conclude this section by presenting two classical theorems: the Mountain Pass Theorem and its Rabinowitz $\mathbb{Z}_2$–symmetric version. These theorems will play a crucial role in proving our main results in Section \ref{sec 4}. The theorems are summarized below.
\begin{theorem}[Ambrosetti and Rabinowitz  {\cite[Theorem 2.1]{AmbrosettiRabinowitz}}] \label{PMT}
	Consider a real infinite-dimensional Banach space $X$ and let $E: X \to \mathbb{R}$ be a $C^{1}$ function satisfying the $(PS)_c$ condition with $E(0_X)=0$. Assume the following conditions :
	\begin{itemize}
		\item[($\mathcal{I}_1$)] There exist positive constants $\mathcal{R}$ and $\rho$ such that $E(u)\geq\mathcal{R}$ for all $u\in X$ with $\|u\|_{X}=\rho$.
		\item[($\mathcal{I}_2$)] There exists an element $\tilde{u}\in X$ such that  $\left\| \tilde{u}\right\|_{X} >\rho$ and $E(\tilde{u})<0$.
	\end{itemize}
	Then. $E$ has a critical value $c\geq \mathcal{R}$, which can be characterized as
	\[c := \inf_{\phi \in \Gamma} \max_{ t \in \left[ 0,1\right]  }E(\phi(t)),\]
	where
	$$\Gamma= \big\{ \phi \in C\big( \left[ 0,1\right],X\big) : \phi(0)=0_X, E(\phi (1))<0 \big\} . $$
\end{theorem}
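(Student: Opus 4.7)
The plan is to combine the min-max characterization of $c$ with a deformation argument driven by the Palais-Smale condition. The first task is to verify $c \geq \mathcal{R} > 0$: every admissible path $\phi \in \Gamma$ begins at $0_X$ (inside the ball of radius $\rho$) and ends at $\tilde u$ (outside it, by ($\mathcal{I}_2$)), so by continuity the path must meet the sphere $\{\|u\|_X = \rho\}$ at some time $t_\phi$. Assumption ($\mathcal{I}_1$) then gives $E(\phi(t_\phi)) \geq \mathcal{R}$, whence $\max_{t\in[0,1]} E(\phi(t)) \geq \mathcal{R}$ for every $\phi \in \Gamma$, and taking the infimum yields $c \geq \mathcal{R}$.

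Next I would prove by contradiction that $c$ is a critical value. Suppose $K_c := \{u \in X : E(u)=c,\ E'(u)=0\}$ is empty. The key technical ingredient is the \emph{quantitative deformation lemma}: for each sufficiently small $\bar\epsilon > 0$ there exist $\epsilon \in (0,\bar\epsilon)$ and a continuous map $\eta : [0,1]\times X \to X$ such that $\eta(0,\cdot) = \mathrm{id}_X$; $\eta(t,u)=u$ whenever $|E(u)-c| \geq 2\bar\epsilon$; and $E(\eta(1,u)) \leq c - \epsilon$ whenever $E(u) \leq c+\epsilon$. I would construct $\eta$ as the time-one map of the flow $\dot\eta = -\chi(\eta)\,V(\eta)$, where $V$ is a locally Lipschitz pseudo-gradient field for $E$ on the set of regular points (built via a partition of unity, with $\|V(u)\| \leq 2\|E'(u)\|$ and $\langle E'(u), V(u)\rangle \geq \|E'(u)\|^2$), and $\chi$ is a Urysohn-type cutoff localizing the flow to the strip $\{|E-c| \leq 2\bar\epsilon\}$.

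With the lemma in hand, I would choose $\bar\epsilon$ so small that $2\bar\epsilon < \min\{c,\, -E(\tilde u)\}$, which ensures neither $0_X$ (where $E=0$) nor $\tilde u$ (where $E<0$) lies in the deformation region. Picking a near-optimal path $\phi \in \Gamma$ with $\max_t E(\phi(t)) \leq c+\epsilon$ and setting $\widetilde\phi(t) := \eta(1,\phi(t))$, I obtain $\widetilde\phi \in \Gamma$ (its endpoints are fixed, and $E(\widetilde\phi(1)) = E(\tilde u) < 0$), while property (iii) forces $\max_t E(\widetilde\phi(t)) \leq c - \epsilon$, contradicting the very definition of $c$. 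Therefore $K_c \neq \emptyset$, producing the desired critical value at level $c$.

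The main obstacle is the construction of $\eta$, which is the only place where the $(PS)_c$ hypothesis is used in an essential way: it forces $\|E'\|$ to be bounded below by a positive constant on the strip $\{|E-c|\leq 2\bar\epsilon\}$ (since $K_c=\emptyset$), so the pseudo-gradient flow decreases $E$ by a definite amount over the unit time interval. Everything else — global existence of the flow, its joint continuity in $(t,u)$, and the preservation of sublevel sets — reduces to a routine Cauchy-Lipschitz argument for an autonomous ODE in the Banach space $X$.
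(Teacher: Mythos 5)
The paper does not contain a proof of Theorem \ref{PMT}; it is imported verbatim as a classical result from Ambrosetti and Rabinowitz \cite{AmbrosettiRabinowitz} and used as a black box. Your sketch is the now-standard quantitative-deformation-lemma proof (closer in spirit to Willem or to Rabinowitz's CBMS monograph \cite{Rabinowitz} than to the original finite-dimensional Galerkin argument of \cite{AmbrosettiRabinowitz}), and its essential steps are correct: the path-crossing argument gives $c\geq\mathcal{R}$; assuming $K_c=\emptyset$, the $(PS)_c$ condition bounds $\|E'\|$ away from zero on the strip $\{\,|E-c|\leq 2\bar\epsilon\,\}$, so the pseudo-gradient flow you build decreases $E$ by a definite $\epsilon$ over unit time; deforming a near-optimal path below level $c-\epsilon$ then contradicts the definition of $c$. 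This is a correct, self-contained route to the result.

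One detail worth flagging concerns not your argument but the statement as transcribed in the paper: $\Gamma$ is written as $\{\phi\in C([0,1],X):\phi(0)=0_X,\ E(\phi(1))<0\}$, with no requirement that $\phi(1)$ lie outside the ball of radius $\rho$. Under ($\mathcal{I}_1$) alone, $E$ could dip below zero at some point strictly inside that ball, and a path from $0_X$ to such a point need never meet the sphere $\{\|u\|_X=\rho\}$, so the crossing argument for $c\geq\mathcal{R}$ would fail for that $\Gamma$. Both your proof and the paper's own invocation in the proof of Theorem \ref{thm1} implicitly use the standard class $\Gamma=\{\phi:\phi(0)=0_X,\ \phi(1)=\tilde u\}$ with $\|\tilde u\|_X>\rho$ from ($\mathcal{I}_2$); that is the correct formulation, and with it your argument is sound.
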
\par 
This theorem provides conditions under which a function $E$ has a critical value $c$, and it characterizes $c$ as the infimum of a certain set of functions $\phi$ in $X$.
\begin{theorem}[Rabinowitz  {\cite[Theorem 9.12]{Rabinowitz}}] \label{SPMT}
	Let $X$ be a real infinite-dimensional Banach space and let $E: X \to \mathbb{R}$ be even and
	of class $C^{1}$, satisfying $(PS)_c$ and  $E(0_X)=0$. Suppose that assumption ($\mathcal{I}_1$) holds in addition to  condition
	($\mathcal{I}_2 '$):
	For all finite-dimensional subspaces $X_1 \subset X$, the set
	$\mathcal{S}_1:= \{ u \in X_1: E(u) \geq 0\}$ is bounded in $X$.
	Then  $E$ has has an unbounded sequence of critical values.
\end{theorem}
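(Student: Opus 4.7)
The plan is to invoke the classical Ljusternik-Schnirelmann theory via the Krasnoselskii genus. For a closed symmetric set $A \subset X \setminus \{0_X\}$, let $\gamma(A)$ denote its genus, and set
$$
\mathcal{A}_k := \{ A \subset X \setminus \{0_X\} : A \text{ is closed, symmetric and } \gamma(A) \geq k \}, \qquad c_k := \inf_{A \in \mathcal{A}_k} \sup_{u \in A} E(u).
$$
By monotonicity $c_1 \leq c_2 \leq \dots$, so the entire argument reduces to proving that each $c_k$ is a critical value of $E$ and that $c_k \to +\infty$; together these immediately yield the desired unbounded sequence of critical values.

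The lower bound $c_k \geq \mathcal{R}$ for every $k \geq 1$ is immediate: any $A \in \mathcal{A}_k$ must meet the sphere $\{u \in X : \|u\|_X = \rho\}$, since a closed symmetric set of genus at least one cannot be odd-homeomorphically pushed away from it, and on that sphere $E \geq \mathcal{R}$ by $(\mathcal{I}_1)$. That each $c_k$ is in fact a critical value is the standard consequence of the $(PS)_{c_k}$ hypothesis combined with the equivariant deformation lemma: if $c_k$ were a regular value, the evenness of $E$ yields an odd continuous deformation $\eta$ of $X$ and some $\varepsilon > 0$ with $\eta(\{E \leq c_k + \varepsilon\}) \subset \{E \leq c_k - \varepsilon\}$, and since $\eta$ preserves the genus, applying $\eta$ to a near-optimal $A \in \mathcal{A}_k$ produces another element of $\mathcal{A}_k$ whose supremum lies below $c_k$, contradicting the definition.

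The delicate third step is to prove $c_k \to +\infty$. Fixing a nested increasing sequence of finite-dimensional subspaces whose union is dense in $X$, one argues by contradiction: if some subsequence $(c_{k_j})$ remained bounded by $M$, pick $A_{k_j} \in \mathcal{A}_{k_j}$ with $\sup_{A_{k_j}} E \leq M+1$ and combine the slice-wise boundedness supplied by $(\mathcal{I}_2')$ with the intersection property of the genus (a genus-$k$ symmetric set meets every closed subspace of codimension at most $k-1$) to bound $\gamma(A_{k_j})$ by a quantity depending only on $M$, contradicting $\gamma(A_{k_j}) \geq k_j$. The principal obstacle lies precisely here: reconciling the infinite-dimensional genus lower bound with the finite-dimensional global growth condition $(\mathcal{I}_2')$ requires a careful pseudo-index framework or fountain-type construction in the spirit of Rabinowitz, whereas $(\mathcal{I}_1)$ and the $(PS)_{c}$ assumption enter only through the first two, essentially routine steps once the equivariant deformation machinery is in place.
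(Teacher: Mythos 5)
The paper does not prove this statement; it simply cites Rabinowitz's Theorem~9.12, so the comparison is with the classical proof in that monograph rather than with anything in this article.

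Your strategy — set $c_k=\inf_{\gamma(A)\ge k}\sup_A E$ over the unconstrained Krasnoselskii genus classes, show $c_k\ge\mathcal{R}$, show each $c_k$ is critical via the equivariant deformation lemma, and finally argue $c_k\to+\infty$ — fails at the last step, and the failure is structural rather than a deferred technicality. The unconstrained genus classes $\mathcal{A}_k$ do \emph{not} force $c_k\to\infty$. Indeed, for any $k$-dimensional subspace $X_k$, the set $A=\{u\in X_k:\|u\|_X=\rho\}$ is closed, symmetric, has genus exactly $k$, and sits inside the fixed sphere $\{\|u\|_X=\rho\}$. Hence, whenever $E$ is bounded on that sphere — which is the generic situation — one has $c_k\le\sup_{\|u\|_X=\rho}E(u)$ for every $k$, so your $c_k$ are uniformly bounded and cannot possibly yield an unbounded sequence of critical values. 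Your attempted rescue via the ``intersection property of the genus'' also does not close the gap: that property says a set of genus $>m$ meets every closed subspace of finite \emph{codimension} $\le m$, while $(\mathcal{I}_2')$ constrains $E$ only on finite-\emph{dimensional} subspaces, and these dual families do not interact in the way your sketch requires.

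This is precisely why Rabinowitz does not use the plain genus classes. He fixes a nested chain $X_1\subset X_2\subset\cdots$ of finite-dimensional subspaces with dense union, uses $(\mathcal{I}_2')$ to choose radii $R_m$ with $E\le 0$ on $X_m\setminus B_{R_m}$, sets $D_m=\overline{B_{R_m}}\cap X_m$, and defines the minimax families
\[
\Gamma_j=\big\{h\big(\overline{D_m\setminus Y}\big): m\ge j,\ h\in C(D_m,X)\ \text{odd},\ h|_{\partial D_m}=\mathrm{id},\ Y\ \text{closed symmetric},\ \gamma(Y)\le m-j\big\},
\]
with $c_j=\inf_{B\in\Gamma_j}\max_{u\in B}E(u)$. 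These sets carry the finite-dimensional linking data that the plain genus classes lack; the fact that each $B\in\Gamma_j$ is an odd image of a large finite-dimensional ball pinned at its boundary is exactly what lets $(\mathcal{I}_2')$ drive $c_j\to+\infty$. Steps (i) and (ii) of your sketch are fine routines, but the minimax family is the wrong one, and the crux of the theorem is exactly the step you defer.
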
\par 
This theorem provides conditions under which a function $E$ has an unbounded sequence of critical values, where critical values are defined with respect to the $(PS)_c$ condition.\\

\textbf{\textbf{Notations.}} In our discussions, we shall use the following notations: Strong (resp. weak, weak-*) convergence is denoted by $\rightarrow$ (resp., $\rightharpoonup$, $\overset{\ast}{\rightharpoonup}$),
constants: $C_i$, $C_{i}'$, $c_j$, and $c_{i}"$ represent positive constants, which may vary from one line of the text to another and can be determined under specific conditions. $X^{\ast}$ denotes the dual space of $X$, $\delta_{x_j}$ represents the Dirac mass at the point $x_j$. For any $\rho>0$ and $x\in \Omega$, $B(x,\rho)$ denotes the open ball of radius $\rho$ centered at $x$, the characteristic function of a set $B$ is denoted by $\chi_B$.

\section{An extension of the Lions concentration-compactness principle} \label{sec 3}
In this section, we shall establish the extension of the  concentration-compactness principle to the anisotropic variable exponent Sobolev spaces, which is one of the main results in this paper.

In what follows, we shall
denote by $\mathcal{M}(\overline{\Omega})$ the class of nonnegative Borel measures of finite total mass, and a sequence $\mu_{n} \overset{\ast}{\rightharpoonup} \mu $ in $\mathcal{M}(\overline{\Omega})$ if and only if $\displaystyle \int_{\Omega} \phi(x) d\mu_{n} \to \int_{\Omega} \phi(x) d\mu_{}$ for every test function $\phi \in C^{\infty}(\Omega)\cap C(\overline{\Omega})$.
Note that by Theorem \ref{theo23}, we have
\begin{equation}\label{24}
	S_{h}  :=\inf_{u\in W_0^{1,\overrightarrow{p}(x)}(\Omega)\backslash \{0\}}
	\frac{ \|u\|_{\overrightarrow{p}(x)}}{\| u \|_{L^{h(x)}(\Omega)}}>0.
\end{equation}
We now state the main result of this section, that is, a concentration-compactness principle for the anisotropic variable exponent Sobolev spaces.

\begin{theorem}\label{ccp}
	Consider continuous functions $p_i$ and $h$ on $\Omega$ such that $1 < \inf_{x\in\Omega}p_i(x) \leq \sup_{x\in\Omega}p_i(x) < N$ for all $i \in \{1,2,\dots,N\}$ and $1 \leq h(x) \leq p_{m}^{\ast}(x)$ in $\Omega$, where $p_{m}\in C_{+}^{\log}(\overline{\Omega})$.  Let $\{u_n\}_{n\in\mathbb{N}}$ be a weakly convergent sequence in
	$W_0^{1,\overrightarrow{p}(x)}(\Omega)$ with weak limit $u$, and such that
	$| \partial_{x_i} u_n|^{p_i(x)}\overset{\ast}{\rightharpoonup}\mu_i$
	in $\mathcal{M}(\overline{\Omega})$
	and
	$|u_n|^{h(x)}\overset{\ast}{\rightharpoonup}\nu$ in $\mathcal{M}(\overline{\Omega})$.
	Also, suppose that the set $\mathcal{A} = \{x\in \Omega\colon h(x)=p_{m}^{\ast}(x)\}$ is nonempty. Then  there exist  $\{x_j\}_{j\in J}\subset \mathcal{A}$ of distinct points and   $\{\mu_j\}_{j\in J}$, $\{\nu_j\}_{j\in J}\subset (0,\infty)$, where $J$ is countable index set, such that
	\begin{gather}
		\nu=|u|^{h(x)} + \sum_{j\in J}\nu_j\delta_{x_j},\    \label{21}\\
		\mu \geq \sum_{i=1}^{N}|\partial_{x_i} u|^{p_i(x)} + \sum_{j\in J} \mu_j \delta_{x_j}, \        \label{22}\\
		\displaystyle
		N^{1-p_M^{+}} S_h	\nu_j^{\frac{1}{p_m^{\ast}(x_j)}} \leq
		\max \Big\{ \Big(  \mu_j   \Big) ^{1/p_{M}^{+}}
		, \Big( \mu_j  \Big) ^{1/p_{m}^{-}}
		\Big\}.\   
		\hbox{ for all } 
		 j\in J.    \label{23}
	\end{gather}
	where $\delta_{x_j}$ is the Dirac mass at $x_j$ and $ \displaystyle\mu= \sum_{i=1}^{n}\mu_i$.
\end{theorem}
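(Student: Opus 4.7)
The plan is to follow the classical Lions concentration-compactness scheme, adapted to the anisotropic variable exponent setting along the lines of Bonder-Silva, Fu, Ho-Kim, and El Hamidi-Rakotoson. First, I would reduce to the case $u=0$. Setting $v_n = u_n - u$ and using the compact embedding from Theorem \ref{theo23} applied with a subcritical target exponent such as $p_m$, one obtains $v_n \to 0$ strongly in $L^{p_m(x)}(\Omega)$ and hence, up to a subsequence, almost everywhere in $\Omega$. A Brezis-Lieb argument carried out at the level of modulars then yields the splittings $\mu_i = |\partial_{x_i}u|^{p_i(x)} + \tilde\mu_i$ and $\nu = |u|^{h(x)} + \tilde\nu$, where $\tilde\mu_i$ and $\tilde\nu$ are the weak-$\ast$ limits of $|\partial_{x_i}v_n|^{p_i(x)}$ and $|v_n|^{h(x)}$. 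It then suffices to establish the atomic decomposition of $\tilde\nu$ with atoms in $\mathcal{A}$, together with the quantitative inequality \eqref{23}.

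The heart of the proof is a reverse Hölder inequality in measure: for every $\phi \in C^\infty_0(\Omega)$,
\[
S_h\,\|\phi\|_{L^{h(x)}(\mathrm{d}\tilde\nu)} \leq \sum_{i=1}^N \|\phi\|_{L^{p_i(x)}(\mathrm{d}\tilde\mu_i)}.
\]
To obtain it I would apply the embedding constant from \eqref{24} to the admissible test function $\phi v_n \in W_0^{1,\overrightarrow{p}(x)}(\Omega)$, expand each $\partial_{x_i}(\phi v_n) = \phi\,\partial_{x_i}v_n + v_n\,\partial_{x_i}\phi$, and invoke the triangle inequality for the Luxemburg norm. The commutator term $\|v_n\,\partial_{x_i}\phi\|_{L^{p_i(x)}}$ vanishes in the limit by the compact embedding into a subcritical Lebesgue space and Proposition \ref{prop21}, combined with the boundedness of $\partial_{x_i}\phi$. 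Passing to the limit $n\to\infty$ via the weak-$\ast$ convergence of $|\partial_{x_i}v_n|^{p_i(x)}$ and $|v_n|^{h(x)}$ tested against the continuous functions $|\phi|^{p_i(x)}$ and $|\phi|^{h(x)}$, and converting modular convergence back to norms through \eqref{L23}-\eqref{L26}, produces the displayed inequality.

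With the reverse Hölder inequality in hand, the structural description of $\tilde\nu$ follows by the classical Lions argument refined for variable exponents. Testing against bump functions $\phi_{\epsilon,x_0}$ concentrated around a fixed $x_0 \notin \mathcal{A}$ and letting $\epsilon \to 0$, the strict inequality $h(x_0) < p_m^\ast(x_0)$ combined with the log-Hölder regularity of $p_m$ (which controls oscillations of the exponents on small balls) forces $\tilde\nu(\{x_0\}) = 0$ and, with a second localization, rules out any non-atomic singular part of $\tilde\nu$. Consequently $\tilde\nu = \sum_{j\in J}\nu_j\delta_{x_j}$ with $\{x_j\}\subset\mathcal{A}$ and $J$ countable, which, via the same inequality, also yields the lower bound $\mu \geq \sum_i |\partial_{x_i}u|^{p_i(x)} + \sum_j \mu_j\delta_{x_j}$. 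For \eqref{23}, one tests the reverse Hölder inequality against cutoffs $\phi_{\epsilon,j}$ equal to $1$ on $B(x_j,\epsilon/2)$ and supported in $B(x_j,\epsilon)$, and then sends $\epsilon \to 0$: the left-hand side concentrates into $\nu_j^{1/p_m^\ast(x_j)}$ while the right-hand side collapses to a bound in terms of $\mu_j = \sum_i\mu_i(\{x_j\})$, with the factor $N^{1-p_M^+}$ appearing upon absorbing the $N$ summands through a power-mean inequality, and with the $\max$ of the two powers $1/p_M^+$ and $1/p_m^-$ arising from the dichotomy \eqref{L24}-\eqref{L25} according to whether the local modular is $\leq 1$ or $>1$.

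The main obstacle will be precisely this lack of homogeneity of the variable-exponent modular, which blocks the scaling arguments that make the constant-exponent proof so clean; the compromise is the case-split ``max of two powers'' form of \eqref{23}, and the corresponding need to track the regime of the Luxemburg norm throughout every limiting step. A second delicate point is justifying the Brezis-Lieb decomposition at the measure level, which requires pairing a.e.\ convergence of $v_n$ with uniform integrability of $|\partial_{x_i}v_n|^{p_i(x)}$ and $|v_n|^{h(x)}$; both inputs are available from Theorem \ref{theo23}, but fitting them together so that $\mu_i = |\partial_{x_i}u|^{p_i(x)} + \tilde\mu_i$ holds as a genuine weak-$\ast$ identity (rather than merely an inequality) is where most of the careful bookkeeping with the modular functionals will be needed.
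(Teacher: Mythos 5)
Your overall architecture — reduce to $v_n=u_n-u$, Brezis-Lieb splitting of $\nu$, a reverse H\"older inequality obtained by testing $\psi v_n$ in the Sobolev inequality and passing the weak-$\ast$ limit via Lemma~\ref{conv.debil}, atomicity of the defect by a Lions-type argument, and then bump-function localization with the case-split from \eqref{L24}--\eqref{L25} to produce the $\max$ in \eqref{23} — is the same as the paper's. The only substantive divergence is your proposed splitting of the gradient measures, and there the proposal has a genuine gap.

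You claim a Brezis-Lieb decomposition $\mu_i = |\partial_{x_i}u|^{p_i(x)} + \tilde\mu_i$ where $\tilde\mu_i$ is the weak-$\ast$ limit of $|\partial_{x_i}v_n|^{p_i(x)}$, and say that the needed inputs (a.e.\ convergence plus uniform integrability) ``are available from Theorem~\ref{theo23}.'' They are not: the compact embedding yields $v_n\to 0$ a.e.\ in $\Omega$, but it says nothing about pointwise convergence of $\partial_{x_i}v_n$. Weak convergence in $W_0^{1,\overrightarrow{p}(x)}(\Omega)$ does not give a.e.\ (or strong $L^1_{\mathrm{loc}}$) convergence of the gradients, and Lemma~\ref{lema 2ccp} cannot be applied to $\{|\partial_{x_i}u_n|^{p_i(x)}\}$ without it. This is not bookkeeping; it is a missing hypothesis, which is precisely why \eqref{22} is stated as an inequality rather than an equality. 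The gap propagates to \eqref{23}: your reverse H\"older inequality involves $\tilde\mu_i$, so testing it on bumps around $x_j$ bounds $\nu_j$ in terms of the atoms of the \emph{defect} measure $\tilde\mu$, not in terms of $\mu_j = \mu(\{x_j\})$; without the decomposition you cannot identify the two.

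The paper circumvents both points. For \eqref{22} it observes that for each fixed nonnegative $\psi\in C(\Omega)$ the map $u\mapsto \int_\Omega \psi\sum_i|\partial_{x_i}u|^{p_i(x)}\,dx$ is convex and $C^1$ on $W_0^{1,\overrightarrow{p}(x)}(\Omega)$, hence weakly lower semicontinuous, giving $\mu\geq \sum_i|\partial_{x_i}u|^{p_i(x)}$ as measures; since this absolutely continuous part has no atoms while \eqref{23} shows each $x_j$ is an atom of $\mu$, one can add the atomic part and obtain \eqref{22}. For \eqref{23} it applies the Sobolev inequality \eqref{24} to $\psi_j u_n$ rather than $\psi_j v_n$: the commutator term $\|u_n\,\partial_{x_i}\psi_j\|_{L^{p_i(x)}}$ is split as $\|u\,\partial_{x_i}\psi_j\|+\|\partial_{x_i}\psi_j\|_\infty\|u_n-u\|_{L^{p_i(x)}}$ and killed in the limit by $u_n\to u$ in $L^{p_i(x)}$, so the limit of the left- and right-hand sides involves $\nu$ and $\mu_i$ directly, with no reference to any decomposition of the gradient measure. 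You should adopt this two-step route and drop the Brezis-Lieb step for $\mu_i$; the Brezis-Lieb argument is needed (and valid) only for $\nu$, where a.e.\ convergence of $v_n$ is genuinely available.
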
\par 
Before we give the proof of Theorem  \ref{ccp}, we recall some auxiliary results obtained by Bonder and Silva \cite{Bonder}.
\begin{lemma}[Bonder and Silva {\cite[Lemma 3.1]{Bonder}}]\label{conv.debil}
	Let $\nu , \{\nu_n\}_{n\in\mathbb{N}} \in \mathcal{M}(\overline{\Omega})$ be such that $\nu_n\overset{\ast}{\rightharpoonup}\nu$ in $\mathcal{M}(\overline{\Omega})$. Then  for all  $q\in C_{+}(\Omega)$, we have
	$
	\|\psi\|_{L^{q(x)}_{\nu_n}(\Omega)} \to
	\|\psi\|_{L^{q(x)}_{\nu}(\Omega)} \   \text{as } n\to \infty, \text{ for all } \psi\in C^{\infty}(\overline{\Omega}).
	$
\end{lemma}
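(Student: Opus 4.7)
The approach is to reduce the statement on Luxemburg norms to the convergence of the associated modulars against a continuous test function, which is immediate from the weak-$*$ hypothesis. For each $\tau>0$, introduce
\[
\Phi_n(\tau) := \int_{\Omega}\Bigl|\tfrac{\psi(x)}{\tau}\Bigr|^{q(x)}\,d\nu_n(x), \qquad \Phi(\tau) := \int_{\Omega}\Bigl|\tfrac{\psi(x)}{\tau}\Bigr|^{q(x)}\,d\nu(x).
\]
Because $\psi\in C^{\infty}(\overline{\Omega})$ and $q$ is continuous on $\overline{\Omega}$ with $q>1$, the map $x\mapsto|\psi(x)/\tau|^{q(x)}$ lies in $C(\overline{\Omega})$ (with the convention $0^{q(x)}=0$, admissible since $q>0$); a standard density argument, relying on the fact that $\nu_n(\overline{\Omega})$ is uniformly bounded along any weak-$*$ convergent sequence in $\mathcal{M}(\overline{\Omega})$, upgrades the paper's definition of weak-$*$ convergence and yields $\Phi_n(\tau)\to\Phi(\tau)$ for every fixed $\tau>0$.

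Set $\tau_n:=\|\psi\|_{L^{q(x)}_{\nu_n}(\Omega)}$ and $\tau_\ast:=\|\psi\|_{L^{q(x)}_{\nu}(\Omega)}$, which by definition are the infima of the half-lines $\{\tau>0:\Phi_n(\tau)\le 1\}$ and $\{\tau>0:\Phi(\tau)\le 1\}$, respectively. Using the boundedness of $\psi$, the finiteness of $\nu(\overline{\Omega})$, and the bound $q\le q^+<\infty$, dominated convergence shows that $\Phi$ is continuous and non-increasing on $(0,\infty)$, and that it is \emph{strictly} decreasing on the subset where it is positive, since $\tau\mapsto|\psi(x)/\tau|^{q(x)}$ is strictly decreasing at every $x$ with $\psi(x)\neq 0$.

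If $\tau_\ast=0$ then $\psi=0$ $\nu$-almost everywhere, so $\Phi(\varepsilon)=0$ for every $\varepsilon>0$, hence $\Phi_n(\varepsilon)\to 0$ and therefore $\tau_n\le\varepsilon$ for all $n$ large; this gives $\tau_n\to 0$. Otherwise $\tau_\ast>0$ and continuity forces $\Phi(\tau_\ast)=1$. Fix $\varepsilon\in(0,\tau_\ast)$; strict monotonicity of $\Phi$ yields $\Phi(\tau_\ast-\varepsilon)>1>\Phi(\tau_\ast+\varepsilon)$, and the pointwise convergence $\Phi_n\to\Phi$ then provides $\Phi_n(\tau_\ast-\varepsilon)>1>\Phi_n(\tau_\ast+\varepsilon)$ for all $n$ sufficiently large, which forces $\tau_\ast-\varepsilon\le\tau_n\le\tau_\ast+\varepsilon$. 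Letting $\varepsilon\downarrow 0$ concludes that $\tau_n\to\tau_\ast$.

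The main obstacle is converting the pointwise convergence of the modular into convergence of the Luxemburg norms (which are defined via an infimum); this is achieved precisely by securing the strict inequalities $\Phi(\tau_\ast\pm\varepsilon)\neq 1$, themselves a consequence of the strict monotonicity of $\Phi$, which in turn requires the hypothesis $q>1$ and the observation that $\nu(\{\psi\neq 0\})>0$ whenever $\tau_\ast>0$.
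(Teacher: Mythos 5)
The paper does not prove this lemma; it is quoted directly from Bonder and Silva \cite[Lemma 3.1]{Bonder}, so there is no in-paper argument to compare against. Your proof is correct and is essentially the standard argument (also the one underlying the cited reference): the reduction of weak-$*$ convergence tested on smooth functions to convergence of the modulars $\Phi_n(\tau)$ — via uniform boundedness of the total masses $\nu_n(\overline{\Omega})$ and density of smooth functions in $C(\overline{\Omega})$ — is sound, and the passage from modular convergence to convergence of the Luxemburg norms, using continuity and strict monotonicity of $\Phi$ where it is positive together with the separate treatment of the degenerate case $\tau_\ast=0$, is complete.
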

\begin{lemma}[Bonder and Silva {\cite[Lemma 3.2]{Bonder}}]\label{Lema 1ccp}
	Let $\mu,\nu \in \mathcal{M}(\overline{\Omega})$ be such that there is some positive constant $c,$ such that
	$$
	\|\psi\|_{L_\nu^{h(x)}(\Omega)}\leq c\|\psi\|_{L_\mu^{p(x)}(\Omega)},$$
	 for some  $p,h \in C_{+}(\overline{\Omega}) \text{ satisfying } \inf_{x\in \Omega}(h(x)-p(x))>0.
	$
	Then  there is an at most countable set $\{x_j\}_{j\in J}$ of distinct points in $\overline{\Omega}$ and
	$\{\nu_j\}_{j\in J}\subset (0,\infty)$, such that
	$
	\nu=	\displaystyle \sum\limits_{j\in J} \nu_j\delta_{x_j}.
	$
\end{lemma}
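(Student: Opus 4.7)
The plan is to prove that $\nu$ has no non-atomic part by combining a Radon-Nikodym argument with the Lebesgue-Besicovitch differentiation theorem, crucially exploiting the strict gap $\eta := \inf_{\Omega}(h-p) > 0$. My first step would be to upgrade the hypothesis from smooth test functions to characteristic functions of arbitrary Borel sets. Given Borel $E \subset \overline{\Omega}$, I would approximate $\chi_E$ monotonically by nonnegative $\psi_n \in C^{\infty}(\Omega) \cap C(\overline{\Omega})$ with $0 \leq \psi_n \leq 1$; via dominated convergence applied to the modulars $\int \psi_n^{h(x)}\,d\nu$ and $\int \psi_n^{p(x)}\,d\mu$ (both bounded by the finite total masses of $\nu$ and $\mu$) together with the lattice property of the Luxemburg norm, the hypothesis passes to the limit:
\[ \|\chi_E\|_{L^{h(x)}_{\nu}(\Omega)} \leq c\, \|\chi_E\|_{L^{p(x)}_{\mu}(\Omega)}. \]
In particular $\mu(E) = 0$ forces $\nu(E) = 0$, so $\nu \ll \mu$, and the Radon-Nikodym derivative $g := d\nu/d\mu$ is a well-defined element of $L^1_\mu(\overline{\Omega})$.

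Next I would convert this into a pointwise reverse-Hölder bound on balls. Applying the norm-modular inequalities \eqref{L23}-\eqref{L25} (licensed for weighted spaces by Remark \ref{remark 1}) with $\psi = \chi_{B(x,\rho)}$ for $\rho$ small enough that both relevant norms lie below $1$, produces
\[ \nu(B(x,\rho))^{1/h^-_\rho(x)} \leq c\, \mu(B(x,\rho))^{1/p^+_\rho(x)}, \]
where $h^-_\rho(x) := \inf_{B(x,\rho)} h$ and $p^+_\rho(x) := \sup_{B(x,\rho)} p$. Using continuity of $h, p$ and the uniform gap $\eta > 0$, this yields $\nu(B(x,\rho)) \leq C\, \mu(B(x,\rho))^{h(x)/p(x) - \varepsilon(\rho)}$ with $\varepsilon(\rho)\to 0$ as $\rho\to 0$, the exponent $h(x)/p(x) - 1 \geq \eta/p^+ > 0$ being bounded away from zero uniformly in $x$.

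At every $x$ with $\mu(\{x\}) = 0$ one has $\mu(B(x,\rho)) \to 0$ as $\rho\to 0$, so the Lebesgue-Besicovitch differentiation theorem for Radon measures on $\mathbb{R}^N$ gives, at $\mu$-a.e.\ such $x$,
\[ g(x) = \lim_{\rho\to 0}\frac{\nu(B(x,\rho))}{\mu(B(x,\rho))} \leq \lim_{\rho\to 0} C\,\mu(B(x,\rho))^{h(x)/p(x)-1-\varepsilon(\rho)} = 0. \]
Hence $g = 0$ $\mu$-a.e.\ outside the set $D := \{x \in \overline{\Omega}: \mu(\{x\})>0\}$ of atoms of $\mu$. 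Because $\mu$ is finite, $D$ is at most countable; writing $D = \{x_j\}_{j\in J}$ and setting $\nu_j := \nu(\{x_j\})$, this gives $\nu = \sum_{j\in J}\nu_j \delta_{x_j}$, and after discarding indices with $\nu_j = 0$ we recover the stated decomposition.

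The main obstacle I anticipate is the first step: passing from the smooth-test-function hypothesis to the characteristic-function inequality in the variable-exponent setting cannot be done by the usual trick of raising to a fixed power. It must instead be handled at the modular level, using bounded monotone approximants $\psi_n \uparrow \chi_E$ and dominated convergence on both sides simultaneously, together with outer regularity of the Radon measures. Once that is in place, the rest is a clean reverse-Hölder-to-atomic argument driven entirely by the exponent gap $h > p$.
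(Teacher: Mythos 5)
The paper offers no proof of this lemma: it is quoted verbatim from Bonder and Silva \cite[Lemma 3.2]{Bonder}, so there is no in-paper argument to compare against. Your reconstruction is essentially the classical Lions-type argument that Bonder and Silva themselves use: upgrade the test-function inequality to characteristic functions, deduce $\nu\ll\mu$, extract the reverse-H\"older bound $\nu(B)^{1/h^-_\rho}\le c\,\mu(B)^{1/p^+_\rho}$ on small balls from the norm--modular relations, and annihilate the Radon--Nikodym derivative at every non-atom of $\mu$ by Besicovitch differentiation, the decisive point being that the exponent $h^-_\rho/p^+_\rho-1$ stays bounded below by a positive constant because $\inf_\Omega(h-p)>0$. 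The argument is correct, and the concluding reduction (the atoms of the finite measure $\mu$ form an at most countable set carrying all of $\nu$) is sound.

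The one step you should write out with more care is the first: a characteristic function of an \emph{arbitrary} Borel set cannot be approximated monotonically by continuous functions, so the upgrade must proceed through open sets (where $\psi_n\uparrow\chi_U$ is available and monotone convergence of the modulars, uniformly in the Luxemburg parameter $\tau$ bounded away from $0$, yields convergence of the norms) and then reach general Borel sets by outer regularity of the two finite Radon measures, as your closing remark anticipates. In fact the full Borel-set version is never needed: the differentiation step only uses balls, which are open, and the absolute continuity $\nu\ll\mu$ also follows from open neighborhoods alone, since $\mu(U)<\varepsilon$ forces $\|\chi_U\|_{L^{p(x)}_\mu}\le\mu(U)^{1/p^+}$ and hence, via the hypothesis and \eqref{L26}, $\nu(U)\to 0$.
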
 \par 
The following lemma is an extension of the Brezis–Lieb Lemma to Lebesgue spaces with variable exponents.
\begin{lemma}[Bonder and Silva {\cite[Lemma 2.1]{Fu}}]\label{lema 2ccp}
	Consider a bounded sequence $\{u_n\}_{n\in \mathbb{N}}$ in $L^{h(x)}(\Omega)$ and let $u(x)$ be such that $u_n(x)$ converges to $u(x)$ in $L^{h(x)}(\Omega)$ for a.e. $x\in\Omega$. Then  the following holds:
	\[
	\lim_{n\to\infty}\left(\int_\Omega|u_n|^{h(x)}dx - \int_\Omega|u-u_n|^{h(x)}dx\right) = \int_\Omega|u|^{h(x)}dx.
	\]
\end{lemma}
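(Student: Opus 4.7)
The plan is to adapt the classical Brezis--Lieb argument (constant exponent) to the variable exponent setting, using the fact that $h$ is bounded (say $1\le h^-\le h(x)\le h^+<\infty$, as $h\in C_{+}(\overline{\Omega})$) so that all auxiliary constants can be chosen uniformly in $x$. The strategy has three steps: a uniform pointwise inequality, a truncation-plus-dominated-convergence argument, and a final $\varepsilon\to 0$ limit.

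First, I would establish the pointwise elementary inequality: for every $\varepsilon>0$ there exists $C_{\varepsilon}=C_{\varepsilon}(h^-,h^+)>0$, independent of $x$, such that
\[
\bigl||a+b|^{h(x)}-|a|^{h(x)}\bigr|\le \varepsilon\,|a|^{h(x)}+C_{\varepsilon}\,|b|^{h(x)}\qquad\text{for all }a,b\in\mathbb{R},\ x\in\Omega.
\]
This is the standard convexity/Young inequality for each fixed exponent $p=h(x)\in[h^-,h^+]$, combined with the observation that the optimal constant depends continuously on $p$ and is therefore bounded on the compact range $[h^-,h^+]$. Applied with $a=u_n-u$ and $b=u$, adding $|u|^{h(x)}$ to both sides yields, for a.e.\ $x\in\Omega$,
\[
W_n(x):=\bigl||u_n|^{h(x)}-|u_n-u|^{h(x)}-|u|^{h(x)}\bigr|\le \varepsilon\,|u_n-u|^{h(x)}+(C_{\varepsilon}+1)\,|u|^{h(x)}.
\]

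Second, I would note that $u\in L^{h(x)}(\Omega)$ by Fatou's lemma applied to the modular (since $u_n\to u$ a.e.\ and $\{\rho_h(u_n)\}$ is bounded), so the right-hand side is the sum of a small term plus an integrable majorant. Define the truncated quantity
\[
g_{n,\varepsilon}(x):=\max\bigl(W_n(x)-\varepsilon\,|u_n-u|^{h(x)},\,0\bigr).
\]
Then $0\le g_{n,\varepsilon}(x)\le (C_{\varepsilon}+1)|u(x)|^{h(x)}\in L^1(\Omega)$ and, because $u_n(x)\to u(x)$ pointwise a.e.\ and $h$ is continuous, one has $W_n(x)\to 0$ a.e., hence $g_{n,\varepsilon}(x)\to 0$ a.e. The Lebesgue dominated convergence theorem gives $\int_{\Omega}g_{n,\varepsilon}\,dx\to 0$ as $n\to\infty$. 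Since $W_n\le g_{n,\varepsilon}+\varepsilon|u_n-u|^{h(x)}$ and the modular $\rho_h(u_n-u)$ is bounded by some constant $M$ (from boundedness of $\{u_n\}$ and $u\in L^{h(x)}$), we obtain
\[
\limsup_{n\to\infty}\int_{\Omega}W_n\,dx\le \varepsilon M,
\]
and letting $\varepsilon\to 0^+$ yields $\int_{\Omega}W_n\,dx\to 0$, which is the claimed identity after removing the absolute value (the signed integrand converges to zero in $L^1$, so its integral does too).

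The main obstacle is the uniformity in $x$ of the constant $C_{\varepsilon}$ in the elementary pointwise inequality: the classical proof produces $C_{\varepsilon,p}$ depending on $p$, and one must verify that this constant stays bounded as $p$ ranges over the compact interval $[h^-,h^+]$. Once that uniformity is secured, the truncation trick proceeds exactly as in the constant-exponent case. Everything else (Fatou for the modular, dominated convergence, boundedness of $\rho_h(u_n-u)$) is standard machinery for variable exponent Lebesgue spaces as reviewed in Section~\ref{sec 2}.
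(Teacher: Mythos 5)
Your argument is correct: it is the standard Brezis--Lieb truncation proof adapted to variable exponents, which is exactly how the cited source establishes this result --- the paper itself gives no proof, quoting the lemma from Fu \cite[Lemma 2.1]{Fu} and Bonder--Silva. The one point requiring care, the uniformity of $C_\varepsilon$ over $p\in[h^-,h^+]$, is real but does hold (e.g.\ via the mean-value-theorem split of the cases $|b|\ge\delta|a|$ and $|b|<\delta|a|$, whose constants depend only on $\varepsilon$ and $h^+$), and you correctly read the hypothesis as pointwise a.e.\ convergence despite the garbled phrasing in the statement.
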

\begin{proof}[Proof of Theorem \ref{ccp}]
	We start by establishing relation \eqref{21}. To this end, we put  $v_n=u_n-u$.  Then we have up to a subsequence, that
	\begin{equation}\label{convn}
		\begin{gathered}
			v_n \rightharpoonup 0 \text{ in } W_{0}^{1,\overrightarrow{p}(x)}(\Omega), \text{ and }
			v_n(x) ~~\to ~~ 0,  \text{ for a.e. }  x\in \Omega.
		\end{gathered}
	\end{equation}
	So, by the Brezis-Lieb Lemma \ref{lema 2ccp},  we can see that
	$
	\lim_{n\to\infty}\int_\Omega\Big||u_n|^{h(x)}
	-|v_n|^{h(x)} -|u|^{h(x)}\Big|dx=0.
	$
	Thus, from the last equality and  relation \eqref{convn}, we can deduce that
	$
	\lim_{n\to\infty}\Big(\int_\Omega \psi|u_n|^{h(x)}
	-\int_\Omega\psi|v_n|^{h(x)} dx\Big)
	=\int_\Omega\psi|u|^{h(x)} dx, \   \text{ for all } \psi \in C(\overline{\Omega}),
	$
	that is,
	\begin{equation} \label{L336}
		|v_n|^{h(x)} \overset{\ast}{\rightharpoonup}  \nu-|u|^{h(x)} = \tilde{\nu} \text{ in }  \mathcal{M}(\overline{\Omega}).
	\end{equation}		
	Obviously,
	$\Big\{\displaystyle\sum_{i=1}^{n}|\partial_{x_i}u_n|^{p_i(x)}\Big\}	$ is bounded in $L^1(\Omega)$.  So up to a subsequence, we have  as $ n\to +\infty$
	\begin{equation} \label{L337}
		\sum_{i=1}^{n}|\partial_{x_i}u_n|^{p_i(x)} \overset{\ast}{\rightharpoonup} \tilde{\mu} \text{ in } \mathcal{M}(\overline{\Omega}) \text{ for some } \tilde{\mu}=	\sum_{i=1}^{n}\tilde{\mu}_i\in \mathcal{M}(\overline{\Omega}).
	\end{equation}
	Clearly, $\psi v\in  W_{0}^{1,\overrightarrow{p}(x)}(\Omega)$ for all   $\psi \in C^{\infty}(\overline{\Omega})$ and  $v\in  W_{0}^{1,\overrightarrow{p}(x)}(\Omega)$. Then  by applying relation \eqref{24}, for all  $\psi \in C^{\infty}(\overline{\Omega})$, we obtain
	\begin{align*}
		\displaystyle S_h\| \psi v_n\|_{L^{h(x)}(\Omega)} &\leq \sum_{i=1}^{N}\| \partial_{x_i}
		(\psi v_n)\|_{L^{p_i(x)}(\Omega)}\\
		&\leq \sum_{i=1}^{N}\big(\| \psi\partial_{x_i}
		v_n\|_{L^{p_i(x)}(\Omega)} + \|v_n \partial_{x_i}
		\psi\|_{L^{p_i(x)}(\Omega)}\big)\\
		&\leq \sum_{i=1}^{N}\big(\| \psi\partial_{x_i}
		v_n\|_{L^{p_i(x)}(\Omega)} + \|\partial_{x_i}
		\psi\|_{L^{\infty}(\Omega)}\|v_n \|_{L^{p_i(x)}(\Omega)}\big) .
	\end{align*}
	Since  $v_n \rightharpoonup  0$ in $W_{0}^{1,\overrightarrow{p}(x)}(\Omega)$ (according to relation \eqref{convn}), we can infer that $v_n \to 0$ in $L^{p_i(x)}(\Omega),$ for all  $i\in\{1,2,\dots,N\},$ in view of Theorem \ref{theo23}.
	By Lemma \ref{conv.debil}, we get
	$$
	S_h\| \psi \|_{L_{\tilde{\nu}}^{h(x)}(\overline{\Omega})}
	\leq
	\sum_{i=1}^{N}\| \psi\|_{L_{\tilde{\mu}_i}^{p_i(x)}(\overline{\Omega})} \leq N \max_{1\leq i \leq N}\| \psi\|_{L_{\tilde{\mu}_i}^{p_i(x)}(\overline{\Omega})},
	$$	
	for all  $\psi \in C^{\infty}(\overline{\Omega})$. Hence by applying Lemma \ref{Lema 1ccp}, we obtain
	$\displaystyle \tilde{\nu}= \sum_{j\in I}\nu_j\delta_{x_j},$
	for some at most countable set $J$, a family of $\{x_j\}_{j\in J}\subset\Omega$ and a family of nonnegative numbers$\{\nu_j\}_{j\in J}$. That is, we have thus obtained relation \eqref{21}.\par
	Let us now prove that the points $x_j$ actually belong to the critical set $\mathcal{A}$. Assume to the contrary, that there exist some  $x_j \in$ in $ \overline{\Omega} \backslash \mathcal{A}$. Let $\rho$ be a positive number such that  $\overline{B_{}(x_j,2\rho)}\subset \mathbb{R}^N\backslash \mathcal{A}$, noting the closedness of $\mathcal{A}$. We put  $\Omega_{B_j}=B_{}(x_j,\delta)\cap \overline{\Omega}$, and get  $\overline{\Omega_{B_j}} \subset \overline{\Omega} \backslash \mathcal{A}$ and hence, $h(x)< p_m^{\ast}(x)-\varepsilon $, for some $\varepsilon>0$ in $\overline{\Omega_{B_j}}$. Since  $1<h(x)< p_m^{\ast}(x)-\varepsilon $ for each $ x\in \overline{\Omega_{B_j}}\cap \overline{\Omega}$, we can get $\bar{h}\in C_{+}(\overline{\Omega})$ such that $\bar{h}|_{\overline{\Omega_{B_j}}}=h$ and $\bar{h}(x)<p_m^{\ast}(x)-\varepsilon $ for all  $x\in \overline{\Omega}$. Therefore by Theorem \ref{theo23}, we find $ u_n \to u$ in $L^{\bar{h}(x)}(\Omega)$. Equivalently, $\displaystyle \int_{\Omega_{B_j}}|u_n-u|^{\bar{h}(x)}dx \to 0$.
	Thus, by applying the Brezis-Lieb Lemma \ref{lema 2ccp}, we find
	$ \displaystyle \int_{\Omega_{B_j}}|u_n|^{h(x)}dx \to \int_{\Omega_{B_j}}|u|^{h(x)}dx.$	
	Hence, from this and the fact that
	$ \displaystyle \nu(\Omega_{B_j}) \leq \liminf_{n\to \infty}\int_{\Omega_{B_j}}|u_n|^{h(x)}dx$ (see Fonseca and Leoni \cite[Proposition 1.203]{Fonseca}),  we get
	$\displaystyle\nu(\Omega_{B_j}) \leq \int_{\Omega_{B_j}}|u|^{h(x)}dx.$	
	It follows from relation \eqref{21} that
	$\nu(\Omega_{B_j}) \geq \int_{\Omega_{B_j}}|u|^{h(x)}dx + \nu_j > \int_{\Omega_{B_j}}|u|^{h(x)}dx,$
	which is a contradiction, hence  $\{x_j\}_{j\in J}\subset \mathcal{A}$.
	
	Next, to obtain relation \eqref{22}, consider $\psi \in C_{c}^{\infty}(\mathbb{R}^N)$   such that $\chi_{B(0, \frac{1}{2})} \leq \psi \leq \chi_{B(0,1)}$. Suppose that $J\neq \emptyset$ and fix $j\in J$. Consider $\varepsilon_i>0$ to be arbitrary for all $i\in \{1,2,\dots,N\}$.  Set
	$\displaystyle\psi_{j}(x):=\psi\big(\frac{x_1 -x_{j,1}}{\varepsilon_{1}},\dots,\frac{x_N -x_{j,N}}{\varepsilon_{N}}\big)$.	
	By again using relation \eqref{24} to $\psi_{j} u_n$, we have	
	\begin{align*}
		\displaystyle S_h\| \psi_{j} u_n\|_{L^{h(x)}(\Omega)} &\leq \sum_{i=1}^{N}\big(\| \psi_{j}\partial_{x_i}
		u_n\|_{L^{p_i(x)}(\Omega)} + \|u_n \partial_{x_i}
		\psi_{j}\|_{L^{p_i(x)}(\Omega)}\big)\\
		&\leq   \sum_{i=1}^{N}\bigg(\| \psi_{j}\partial_{x_i}
		u_n\|_{L^{p_i(x)}(\Omega)} + \|u \partial_{x_i}
		\psi_{j}\|_{L^{p_i(x)}(\Omega)}\\
	&\qquad	+	\|\partial_{x_i}
		\psi_{j}\|_{L^{\infty}(\Omega)}
		\|u_n -u\|_{L^{p_i(x)}(\Omega)}
		\bigg).
	\end{align*}	
	Then  by using \eqref{convn} and Theorem \ref{theo23}  and  taking $n\to \infty$ in the last estimate, we obtain	
	\begin{align} \label{Li37}
		\displaystyle S_h\| \psi_{j} \|_{L_{\nu}^{h(x)}(\Omega)} &\leq
		\sum_{i=1}^{N}\| \psi_{j}
		\|_{L_{\mu_i}^{p_i(x)}(\Omega)}  +  \sum_{i=1}^{N} \|u \partial_{x_i}
		\psi_{j}\|_{L^{p_i(x)}(\Omega)}.
	\end{align}
	On the one hand, by using relations \eqref{L24}, \eqref{L25} and Remark \ref{remark 1}, we have
	\begin{align*}
		&\displaystyle \| \psi_{j} \|_{L_{\nu}^{h(x)}(\Omega)} 
		\geq
		\min\Bigg\{  \Big(\int_{B(x_j,\displaystyle\max_{1\leq i \leq N}\varepsilon_i)}|\psi_{j}|^{h(x)}d\nu \Big)^{\frac{1}{h_{j,\varepsilon}^+}},
		\Big(\int_{B(x_j,\displaystyle\max_{1\leq i \leq N}\varepsilon_i)}|\psi_{j}|^{h(x)}d\nu \Big)^{\frac{1}{h_{j,\varepsilon}^-}}
		\Bigg\},
	\end{align*}
	where
	\begin{gather*}
		h^+_{j,\varepsilon}:=\sup_{B\big(x_j,\displaystyle\max_{1\leq i \leq N}\varepsilon_i\big)}h(x) \   \text{ and }\
		h^-_{j,\varepsilon}:=\inf_{B\big(x_j,\displaystyle\max_{1\leq i \leq N}\varepsilon_i\big)}h(x).
	\end{gather*}
	Since
	$$
	\displaystyle \int_{B\big(x_j,\displaystyle\max_{1\leq i \leq N}\varepsilon_i\big)}|\psi_{j}|^{h(x)}d\nu \geq \int_{B\Big(x_j,\frac{\displaystyle\max_{1\leq i \leq N}\varepsilon_i}{2}\Big)}|\psi_{j}|^{h(x)}d\nu\geq \nu(\{x_j\})=\nu_j,
	$$
	it follows that
	$
	\| \psi_{j} \|_{L_{\nu}^{h(x)}(\Omega)} \geq
	\min\big\{  \nu_j^{\frac{1}{h_{j,\varepsilon}^+}},
	\nu_j^{\frac{1}{h_{j,\varepsilon}^-}}
	\big\}.
	$
	When letting  $\displaystyle\max_{1\leq i \leq N}\varepsilon_i \to 0^+$ in the above inequality and using  the fact $x_j \in \mathcal{A}$ and the continuity of $h$, we get
	\begin{align}\label{L3.8}
		\displaystyle \| \psi_{j} \|_{L_{\nu}^{h(x)}(\Omega)}   &\geq  \nu_j^{\frac{1}{p_m^*(x_j)}}.
	\end{align}
	On the other hand,  by relations \eqref{L24}, \eqref{L25} and using Jensen's inequality on the convex function $q: \mathbb{R}^{+}\to  \mathbb{R}^{+}, q(t)=  t^{\bar{p_{m,M}}}$, $\tilde{p_{m,M}}> 1$, we see that
	\begin{equation}\label{Jensen}
			\begin{aligned}
		\displaystyle
		\frac{\|\psi_{j}\|_{\overrightarrow{p}(x)}^{\bar{p_{m,M}}}}{N^{\bar{p_{m,M}}-1}}  &=
		N\Bigg(\displaystyle\frac{\sum_{i=1}^{N}\| \psi_{j} \|_{L_{\mu_i}^{p_i(x)}(\Omega)}}{N}\Bigg)^{\bar{p_{m,M}}}
		\leq	\sum_{i=1}^{N}\| \psi_{j}
		\|_{L_{\mu_i}^{p_i(x)}(\Omega)}^{p_M^{+}}\\
		&\leq	\sum_{i=1}^{N}\| \psi_{j}
		\|_{L_{\mu_i}^{p_i(x)}(\Omega)}^{p_i^{+}}
		\leq 	\sum_{i=1}^{N} \int_{\Omega} |\psi_{j}|^{p_i(x)}d\mu_i,
		\end{aligned}
	\end{equation}
	where
	$ \bar{p_{m,M}}= p_{M}^{+}$ if $   \|\psi_{j}\|_{\overrightarrow{p}(x)}<1$
	and
	$\bar{p_{m,M}}= p_{m}^{-}$ if $\|\psi_{j}\|_{\overrightarrow{p}(x)}\geq 1$.
	Since  $ \psi_{j}^{p_i(x)} \leq \psi_{j}$, we have
	$$
	\displaystyle   \lim_{\displaystyle \max_{1\leq i \leq N}\varepsilon_i\to 0}
	\sum_{i=1}^{N} \int_{\Omega} |\psi_{j}|^{p_i(x)}d\mu_i
	\leq   \lim_{\displaystyle \max_{1\leq i \leq N}\varepsilon_i\to 0}\int_{\displaystyle B(x_j,\displaystyle\max_{1\leq i \leq N}\varepsilon_i)} \psi_{\varepsilon,j}d\mu,$$
	where
	$
	\mu =	\sum_{i=1}^{N}\mu_i
	$
	$ 	
	\leq  \lim_{\displaystyle \max_{1\leq i \leq N}\varepsilon_i\to 0} \mu (\displaystyle B(x_j,\displaystyle\max_{1\leq i \leq N}\varepsilon_i))
	= \mu_j.
	$
	From this and  relation \eqref{Jensen}, we get
	\begin{equation}\label{L3.9}
		\sum_{i=1}^{N}\| \psi_{j}
		\|_{L_{\mu_i}^{p_i(x)}(\Omega)} \leq  \max \Big\{ N^{p_M^{+}-1}\Big(  \mu_j   \Big) ^{1/p_M^{+}}
		, N^{p_m^{-}-1}\Big( \mu_j  \Big) ^{1/p_m^{-}}
		\Big\}.
	\end{equation}
	Next, we shall prove that $\displaystyle\sum_{i=1}^{N} \|u \partial_{x_i}
	\psi_{j}\|_{L^{p_i(x)}(\Omega)} \to 0,$ as $\displaystyle \max_{ \underset{1\leq i \leq N}{}}\varepsilon_i \to 0^{+}$. Indeed, applying the H\"older inequality, we obtain
		\begin{equation*}
		\begin{aligned}
			\displaystyle
			\int_{\Omega} |u\partial_{x_i}
			\psi_{j}|^{p_i(x)} \, dx  &=\int_{\Omega} |u|^{p_i(x)}|\partial_{x_i}
			\psi_{j}|^{p_i(x)} \, dx
			\leq	
		\| |u|^{p_i(x)}\|_{L^{\frac{N}{N-p_i (x)}}(B(x_j,\varepsilon_i))}
		\||\partial_{x_i}
		\psi_{j}|^{p_i(x)} \|_{L^{\frac{N}{p_i(x)}}
			(B(x_j,\varepsilon_i))}.
		\end{aligned}
	\end{equation*}
	Moreover, by  relation \eqref{L225} and using  $$\partial_{x_i}
	\psi_{j}=\frac{1}{\varepsilon_i}\partial_{x_i}\psi\left(\frac{x_1-x_{j,1}}{\varepsilon_1},\dots,\frac{x_N-x_{j,N}}{\varepsilon_N}\right),$$ we obtain the following
		\begin{equation*}
		\begin{aligned}
			\displaystyle \big\||\partial_{x_i}
			\psi_{j}|^{p_i(x)} \big\|_{L^{\frac{N}{p_i(x)}}
				(B(x_j,\varepsilon_i))}^{p_i^{-}}
			 &\leq 1 +\int_{B_{}(x_j,\varepsilon_i)} |\partial_{x_i}
			 \psi_{j}|^N\\
			  &\leq 1+ \int_{B_{}(x_j,\varepsilon_i)} |\partial_{x_i}
			 \psi_{j}|^N \frac{1}{\varepsilon_i^N}\, dx
			\\
			&\leq	1 +\int_{B(0,1)} |\partial_{x_i}
	\psi(y)|^N\, dy,
		\end{aligned}
	\end{equation*}
	so we can conclude from the last two estimates that
	$$
	\int_{\Omega} |u\partial_{x_i}
	\psi_{j}|^{p_i(x)} \, dx \to 0 ~~~~\text{ as }   \displaystyle \max_{1\leq i \leq N}\varepsilon_i \to 0^{+}\to 0,
	$$
	hence, from relation \eqref{L26}, we get
	\begin{equation} \label{L3.10}
		\displaystyle\sum_{i=1}^{N}\|u \partial_{x_i}
		\psi_{j}\|_{L^{p_i(x)}(\Omega)} \to 0 \
		\text{ as }   \displaystyle \max_{ 1\leq i \leq N}\varepsilon_i \to 0^{+}.
	\end{equation}
	Letting $\displaystyle \max_{ 1\leq i \leq N}\varepsilon_i \to 0^{+}$ in \eqref{Li37} and taking into consideration \eqref{L3.8},\eqref{L3.9} and \eqref{L3.10}, we get
	$$
	N^{1-p_M^{+}} S_h	\nu_j^{\frac{1}{p_m^*(x_j)}}
	\leq
	\max \Big\{ \Big(  \mu_j   \Big) ^{1/p_{M}^{+}}, \Big( \mu_j  \Big) ^{1/p_{m}^{-}}
	\Big\}.
	$$
	This shows that $\{x_j\}_{j\in J}$ are all atomic points of $\mu$.
	Finally, to obtain relation \eqref{22}, we notice that for each $\psi\in  C(\Omega)$ with $\psi \geq 0$, the functional
	$ u \mapsto \displaystyle\int_{\Omega}\psi(x)  \bigg(\sum_{i=1}^{N}|\partial_{x_i}u|^{p_i(x)}\bigg)dx
	$
	is convex and differentiable on $W_0^{1,\overrightarrow{p}(x)}(\Omega)$. Therefore it is weakly lower semicontinuous and we obtain\\
		\begin{equation*}
		\begin{aligned}
			\int_{\Omega}\psi(x)  \bigg(\sum_{i=1}^{N}|\partial_{x_i}u|^{p_i(x)}\bigg)dx
			&\leq \liminf_{n\to \infty} \displaystyle\int_{\Omega}\psi(x)  \bigg(\sum_{i=1}^{N}|\partial_{x_i}u_n|^{p_i(x)}\bigg)dx\\
			&=\int_{\overline{\Omega}} \psi d \mu .
		\end{aligned}
	\end{equation*}
	Hence
	$ \mu \geq \displaystyle \sum_{i=1}^{N}|\partial_{x_i}u|^{p_i(x)}.$
	Extracting $\mu$ to its atoms, we get relation \eqref{22} and the proof of Theorem \ref{ccp} is complete. 
\end{proof}
\section{ A class of nonlinear anisotropic elliptic equations with critical growth} \label{sec 4}
In this section, we shall establish the existence and multiplicity of nontrivial solutions for problem \eqref{e1.1}. Throughout this paper, we assume that $f$ satisfies the following conditions:	
\begin{itemize}
	\item[$(\textbf{\textit{f}}_1)$] There exist a positive function $\ell\in C(\overline{\Omega})$ and a positive constant $C_{f}$  such that
	$| f(x,\xi)|\leq C_{f}(1+| \xi |^{\ell(x)-1}), \  \text{for all } (x,\xi)\in\Omega\times\mathbb{R},$
	where $p^{+}_{M}<\ell(x)<h(x)\leq p_m^*(x)$ for all $x\in\overline{\Omega}$.
	\item[$(\textbf{\textit{f}}_2)$] There exist  $R>0$ and $\theta_{\lambda}\geq r^{+}$ (resp. $\theta_{\lambda}\leq r^{-}$ ) if $\lambda\geq 0$ (resp. $\lambda< 0$ ), such that for all   $\xi$ with $| \xi|\geq R$ and $ x\in\Omega$, we have
	$0<\theta_{\lambda}f(x,\xi)\leq \xi f(x,\xi) .$
	\item[($\textbf{\textit{f}}_3$)]$f(x,\xi)=\circ(| \xi |^{p^{+}_{M}})$ as $\xi\to 0$ and uniformly for all $x\in\Omega$.
	\item[$(\textbf{\textit{f}}_4)$] $f$ is odd in $\xi$, that is, $f(x,-\xi)=-f(x,\xi)$, for all $(x,\xi)\in\Omega\times\mathbb{R}$.
	\item[$(\textbf{\textit{H}})$]  $p^{+}_{M}< r(x)<h(x)\leq p_m^*(x)$ and $\mathcal{A} = \{x\in\Omega \colon h(x)=p_m^*(x)\}\neq\emptyset$.
\end{itemize}
\par
Throughout this article, for simplicity, we denote the anisotropic variable exponent space $W_0^{1,\overrightarrow{p}(x)}(\Omega) $ by $X$.
\begin{definition}
	We say that $u\in X$ is a weak solution of problem \eqref{e1.1} if
	\begin{equation}
		 		\int_{\Omega}\sum_{i=1}^Na_i(x,\partial_{x_i}u)\partial_{x_i}v\, \mathrm{d}x + \lambda \int_{\Omega} | u|^{r(x)-2} uv\, \mathrm{d}x
			=  \int_{\Omega} | u|^{h(x)-2} uv\, \mathrm{d}x 
			\	+\beta \int_{\Omega} f(x,u)v\, \mathrm{d}x,
			\end{equation}
	for all
	$
	v \in X.
	$
\end{definition}
The energy functional associated with problem \eqref{e1.1} is defined by $E_{\lambda,\beta} : X\to \mathbb{R}$, where
\begin{equation} \label{e2.1}	
		E_{\lambda,\beta}(u)=\int_{\Omega}\sum_{i=1}^N
		A_i(x,\partial_{x_i}u)\, \mathrm{d}x
		+\int_{\Omega}\frac{\lambda }{r(x)}| u|^{r(x)}\, \mathrm{d}x
		-\int_{\Omega}\frac{1}{h(x)}| u|^{h(x)}\, \mathrm{d}x 
				-\beta\int_{\Omega}F(x,u)\, \mathrm{d}x.
	\end{equation}
By a standard calculation, one can see that $E_{\lambda,\beta} \in  C^{1}(X,\mathbb{R})$ and the Fréchet derivative is
\begin{equation}\label{Fréchet}	
	\langle E_{\lambda,\beta}'(u),v\rangle =
	\int_{\Omega} \sum_{i=1}^N
	a_i(x,\partial_{x_i}u)\partial_{x_i}v\, \mathrm{d}x
	+\lambda \int_{\Omega} | u|^{r(x)-2} uv\, \mathrm{d}x
 	\ -\int_{\Omega} | u|^{h(x)-2} uv\, \mathrm{d}x -\beta \int_{\Omega} f(x,u)v\, \mathrm{d}x
\end{equation}
for all $u, v\in X$. Hence, the weak solutions of \eqref{e1.1} coincide with the critical points of $E_{\lambda,\beta}$.

To prove Theorem \ref{thm1}, we shall apply the Mountain Pass Theorem \ref{PMT}. We shall begin with the following lemmas.
\begin{lemma} \label{lembounded}
	If assumptions {\rm ($\textbf{\textit{A}}_2$)} and {\rm ($\textbf{\textit{f}}_1$)-($\textbf{\textit{f}}_2$)} are satisfied, and $\left\lbrace u_n\right\rbrace_{n\in \mathbb{N}} $ is a (PS)-sequence for the functional $E_{\lambda, \beta }$, then for all $\lambda\in\mathbb{R}$, the sequence $\left\lbrace u_n\right\rbrace{n\in \mathbb{N}} $ is bounded.
\end{lemma}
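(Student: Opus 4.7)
The strategy is the classical Ambrosetti--Rabinowitz truncation. For a well-chosen constant $\theta>0$ the plan is to form
$$J_n := E_{\lambda,\beta}(u_n) - \tfrac{1}{\theta}\langle E_{\lambda,\beta}'(u_n), u_n\rangle,$$
bound $J_n$ from above using the $(PS)$ hypothesis by $C + \tfrac{1}{\theta}\|u_n\|_{X}$ (for $n$ large), and bound it from below by a positive multiple of the anisotropic modular $\sum_{i=1}^N\int_{\Omega}|\partial_{x_i}u_n|^{p_i(x)}\,\mathrm{d}x$ minus a constant. Since this modular grows superlinearly in $\|u_n\|_X$, the two estimates together will force $\|u_n\|_X$ to remain bounded.

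The first step is the choice of $\theta$. For $\lambda\geq 0$, one picks $\theta\in[r^+,\min(\theta_\lambda,h^-)]$, an interval that is nonempty since $(\textbf{\textit{H}})$ gives $p_M^+<r^+<h^-$ while $(\textbf{\textit{f}}_2)$ gives $\theta_\lambda\geq r^+$. For $\lambda<0$, one picks $\theta\in(p_M^+,\theta_\lambda]$ after, if necessary, enlarging $\theta_\lambda$ within $[\theta_\lambda,r^-]$ (a move permitted by $(\textbf{\textit{f}}_2)$) so that $p_M^+<\theta_\lambda\leq r^-$; this is where $(\textbf{\textit{H}})$, namely $p_M^+<r^-$, is essential. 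In both cases $\theta$ satisfies $p_M^+<\theta\leq\min(\theta_\lambda,h^-)$ and $\lambda\bigl(\tfrac{1}{r(x)}-\tfrac{1}{\theta}\bigr)\geq 0$ pointwise.

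The second step is the term-by-term lower bound of $J_n$. Hypothesis $(\textbf{\textit{A}}_2)$ yields
$$A_i(x,\partial_{x_i}u_n)-\tfrac{1}{\theta}a_i(x,\partial_{x_i}u_n)\partial_{x_i}u_n \geq \Big(1-\tfrac{p_M^+}{\theta}\Big)A_i(x,\partial_{x_i}u_n) \geq \tfrac{k_i}{p_M^+}\Big(1-\tfrac{p_M^+}{\theta}\Big)|\partial_{x_i}u_n|^{p_i(x)},$$
which is a positive multiple of $|\partial_{x_i}u_n|^{p_i(x)}$. The integrands $\lambda\bigl(\tfrac{1}{r(x)}-\tfrac{1}{\theta}\bigr)|u_n|^{r(x)}$ and $\bigl(\tfrac{1}{\theta}-\tfrac{1}{h(x)}\bigr)|u_n|^{h(x)}$ are both nonnegative by the choice of $\theta$. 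For the $F$-term, $(\textbf{\textit{f}}_2)$ together with $\theta\leq\theta_\lambda$ gives $\tfrac{1}{\theta}f(x,u_n)u_n-F(x,u_n)\geq 0$ on $\{|u_n|\geq R\}$, while $(\textbf{\textit{f}}_1)$ provides a uniform pointwise bound on both $F(x,\xi)$ and $\xi f(x,\xi)$ for $|\xi|\leq R$, so this term contributes only an additive constant.

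Putting these estimates together yields $c_0\sum_{i=1}^N\int_\Omega|\partial_{x_i}u_n|^{p_i(x)}\,\mathrm{d}x \leq C'+\tfrac{1}{\theta}\|u_n\|_X$ for some $c_0>0$. The final step is to convert this modular bound into a norm bound. If $\|u_n\|_X\to\infty$, then the largest component $\|\partial_{x_{i_0}}u_n\|_{L^{p_{i_0}(x)}(\Omega)}\geq \|u_n\|_X/N\geq 1$ for $n$ large, and \eqref{L24} together with $p_{i_0}^-\geq p_m^->1$ then forces
$$\sum_{i=1}^N\int_\Omega|\partial_{x_i}u_n|^{p_i(x)}\,\mathrm{d}x \geq \bigl(\|u_n\|_X/N\bigr)^{p_m^-},$$
contradicting the linear upper bound above. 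The point I expect to need the most care is the case $\lambda<0$: since $(\textbf{\textit{f}}_2)$ only gives an upper bound on $\theta_\lambda$, the admissible window for $\theta$ must be carefully kept strictly above $p_M^+$, which is exactly where $(\textbf{\textit{H}})$ intervenes.
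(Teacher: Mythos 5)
Your argument follows essentially the same route as the paper's proof: form $E_{\lambda,\beta}(u_n)-\tfrac{1}{\theta}\langle E_{\lambda,\beta}'(u_n),u_n\rangle$, discard the $r$-, $h$-, and $F$-terms as (essentially) nonnegative via $(\textbf{\textit{f}}_2)$, lower-bound the remaining elliptic part by a positive multiple of the anisotropic modular via $(\textbf{\textit{A}}_2)$, and exploit the superlinear growth of the modular against the at-most-linear upper bound coming from the (PS) condition. Your modular-to-norm step (pulling out the largest component, which must exceed $\|u_n\|_X/N$) is a harmless simplification of the paper's split-indices-plus-Jensen argument, and you are right to note explicitly that the $F$-term contributes only an additive constant on $\{|u_n|<R\}$, which the paper drops silently.

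There is, however, one genuine error in your choice of $\theta$: the claim that for $\lambda<0$ one may ``enlarge $\theta_\lambda$ within $[\theta_\lambda,r^-]$ (a move permitted by $(\textbf{\textit{f}}_2)$)'' is not legitimate. The Ambrosetti--Rabinowitz inequality $0<\theta_\lambda F(x,\xi)\leq \xi f(x,\xi)$ automatically propagates to any \emph{smaller} constant $\theta'\in(0,\theta_\lambda]$ (since $F>0$ there), but need not hold for any larger one, so you cannot manufacture $\theta_\lambda>p_M^+$ by pushing $\theta_\lambda$ up. Likewise, the assertion that $(\textbf{\textit{H}})$ yields $p_M^+<r^+<h^-$ does not follow: $r(x)<h(x)$ pointwise gives $r^-<h^-$ and $r^+<h^+$, but not $r^+<h^-$. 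Both constraints you are reaching for, namely $\theta_\lambda>p_M^+$ (so that $1-p_M^+/\theta_\lambda>0$) and $\theta_\lambda\leq h^-$ (so that the $h$-term can be dropped), are also used tacitly in the paper's proof with $\theta=\theta_\lambda$; they must be read as implicit restrictions built into $(\textbf{\textit{f}}_2)$, not as consequences of $(\textbf{\textit{H}})$ or of an enlargement of $\theta_\lambda$.
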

\begin{proof} \label{ee4.1}
	Let $\{u_{n}\}_{n\in \mathbb{N}}\subset X$ be a (PS)-sequence. This implies that
	\begin{equation}\label{PsCv}
		E_{\lambda, \beta}(u_{n})=C_{\lambda, \beta } +o_{n}(1)\   \text{ and } \
		\langle E_{\lambda,\beta}'(u_n),v\rangle= o_{n}(1) \text{ for all } v \in X.
	\end{equation}
	Now, by using  {\rm ($\textbf{\textit{f}}_2$)},  we get for a sufficiently large $n$
	\begin{align*}
		C_{\lambda, \beta } +o_{n}(1)
		&\geq E_{\lambda, \beta}(u_{n})-   \frac{1}{\theta_{\lambda}}\langle E_{\lambda, \beta}'(u_{n}),u_{n}\rangle \\
		&\geq
		\int_{\Omega}\sum_{i=1}^N
		A_i(x,\partial_{x_i}u_n)\,dx
		-\frac{1}{\theta_{\lambda}}\int_{\Omega} \sum_{i=1}^N
		a_i(x,\partial_{x_i}u_n)\partial_{x_i}u_n\, \mathrm{d}x \\
		&\quad 	+ \lambda\Big(\frac{1}{\tilde{r}}-\frac{1}{\theta_{\lambda}}\Big)
		\int_{\Omega}| u_{n}|^{r(x)}\, \mathrm{d}x +\int_{\Omega}\Big(\frac{1}{\theta_{\lambda}}-\frac{1}{h^{-}}
		\Big)| u_{n}|^{h(x)}\, \mathrm{d}x\\
		&\quad
		+ \beta \int_{\Omega}\left(F(x,u_{n})- f(x,u_{n})\frac{u_n}{\theta_{\lambda}}\right)\, \mathrm{d}x\\
		&\geq \sum_{i=1}^N\int_{\Omega} \Big[
		A_i(x,\partial_{x_i}u_n)
		-\frac{1}{\theta_{\lambda}}
		a_i(x,\partial_{x_i}u_n)\partial_{x_i}u_n\,\Big] \mathrm{d}x.
	\end{align*}
	where $ \tilde{r}:\equiv r^{+}$ if  $\lambda> 0$ and $\tilde{r}:\equiv r^{-}$ if $\lambda\leq 0$.
	By using assumption $(\textbf{\textit{A}}_2)$, for all $x\in \Omega$ and $i\in \{1,\dots,N\}$ we obtain
	$a_i(x,\partial_{x_i}u_n)\partial_{x_i}u_n \leq p_i(x)A_i(x,\partial_{x_i}u_n) \leq p_M^{+}A_i(x,\partial_{x_i}u_n), $
	which implies
	$ -\frac{1}{\theta_{\lambda}}a_i(x,\partial_{x_i}u_n)\partial_{x_i}u_n  \geq -\frac{p_M^{+}}{\theta_{\lambda}}A_i(x,\partial_{x_i}u_n).
	$
	
	Therefore,
	\begin{align} \label{PS6.4}
		C_{\lambda, \beta } +o_{n}(1)
		&\geq \Big( 1-\frac{p_M^{+}}{\theta_{\lambda}}\Big)
		\sum_{i=1}^N\int_{\Omega}
		A_i(x,\partial_{x_i}u_n) \mathrm{d}x.
	\end{align} 	
	By using again assumption $(\textbf{\textit{A}}_2)$, we have for all $x\in \Omega$ and all $i\in \{1,\dots,N\}$, that
	$ 	A_i(x,\partial_{x_i}u_n) \geq \frac{1}{p_i(x)}|\partial_{x_i}u_n|^{p_i(x)}\geq \frac{1}{p_M^{+}}|\partial_{x_i}u_n|^{p_i(x)}.
	$
	Then we get
	\begin{align} \label{PS6.4}
		C_{\lambda, \beta } +o_{n}(1)
		&\geq \Big( \frac{1}{p_M^{+}}-\frac{1}{\theta_{\lambda}}\Big)
		\sum_{i=1}^N\int_{\Omega}
		|\partial_{x_i}u_n|^{p_i(x)} \mathrm{d}x.
	\end{align} 	
	For any $n \in \mathbb{N}$, we denote by $\mathcal{B}_{n_1}$ and $\mathcal{B}_{n_2}$ the indices sets
	\[\mathcal{B}_{n_1} =\{ i \in \{1,2,\dots,N\} : |\partial_{x_i}u_n|_{p_i(x)}\leq 1\},
	\]
	and
	\[
	\mathcal{B}_{n_2} =\{ i \in \{1,2,\dots,N\} : |\partial_{x_i}u_n|_{p_i(x)}> 1\}.\]
	Applying relations \eqref{L24}, \eqref{L25} and inequality \eqref{PS6.4}, we find
	\begin{align*}
		C_{\lambda, \beta }+o_{n}(1)
		&\geq\Big(\frac{1}{p_{M}^{+}}-\frac{1}{\theta_{\lambda}}\Big)\Big(\sum_{i\in \mathcal{B}_{n_1}}
		\big\|\partial_{x_i}u_n\big\|_{L^{p_i(x)}(\Omega)}^{p_{M}^{+}}
		+ \sum_{i\in \mathcal{B}_{n_2}}
		\big\|\partial_{x_i}u_n\big\|_{L^{p_i(x)}(\Omega)}^{p_{m}^{-}}
		\Big) \\
		& =\Big(\frac{1}{p_{M}^{+}}-\frac{1}{\theta_{\lambda}}\Big)\Bigg[
		\sum_{i=1}^{N} \big\|\partial_{x_i}u_n\big\|_{L^{p_i(x)}(\Omega)}^{p_{m}^{-}} \\
		&\qquad
		- \sum_{i\in \mathcal{B}_{n_1}} \Big(
		\big\|\partial_{x_i}u_n\big\|_{L^{p_i(x)}(\Omega)}^{p_{m}^{-}}
		-\big\|\partial_{x_i}u_n\big\|_{L^{p_i(x)}(\Omega)}^{p_{M}^{+}}
		\Big) \Bigg]\\
		& \leq \Big(\frac{1}{p_{M}^{+}}-\frac{1}{\theta_{\lambda}}\Big)\Big(
		\sum_{i=1}^{N} \big\|\partial_{x_i}u_n\big\|_{L^{p_i(x)}(\Omega)}^{p_{m}^{-}}
		-N\Big).
	\end{align*}
	By relation \eqref{Jensen}, we get
	$$
	C_{\lambda, \beta }+o_{n}(1)
	\geq\Big(\frac{1}{p^{+}_{M}}-\frac{1}{\theta_{\lambda}}\Big)
	\Bigg( \frac{\big\|\partial_{x_i}u_n\big\|_{\overrightarrow{p}(x)}^{p_{m}^{-}}}{N^{p_{m}^{-}-1}}-N
	\Bigg) .
	$$
	Hence, $\{u_{n}\}_{n\in \mathbb{N}}$ is bounded in $X$.  This completes the proof Lemma \ref{lembounded}. 
\end{proof}
\begin{lemma} \label{lemconv}
	Let $\{u_{n}\}_{n\in \mathbb{N}} \subset X$  be a (PS)-sequence with energy level $C_{\lambda, \beta }$. If
	\begin{equation*}
		\begin{aligned}
		C_{\lambda, \beta } < \Big(\frac{1}{\theta_{\lambda}}-\frac{1}{h_{\mathcal{A}}^{-}}
			\Big) \min \Bigg\{\inf_{j\in J}\bigg((\min_{1\leq i\leq N}k_i)^\frac{1}{p_{M}^{+}} N^{1-p_M^+}S_h\bigg)^{\frac{p_{m}^{\ast}(x_j)p_{M}^{+}}{p_{m}^{\ast}(x_j)-p_{M}^{+}}}, \\
			\inf_{j\in J}\bigg( (\min_{1\leq i\leq N}k_i)^\frac{1}{p_{m}^{-}}N^{1-p_M^+}S_h\bigg)^{\frac{p_{m}^{\ast}(x_j)p_{m}^{-}}{p_{m}^{\ast}(x_j)-p_{m}^{-}}}\Bigg\},
	\end{aligned}
	\end{equation*}
	where $S_h $ is defined in relation \eqref{24}, $h_{\mathcal{A}}^{-} :=\inf_{x\in \mathcal{A}}h(x)$ and the sets $\mathcal{A}$ and $J$ are given in Theorem \ref{ccp}, then there exists a strongly convergent subsequence in $X$.
\end{lemma}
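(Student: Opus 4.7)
The strategy is a standard Lions-type concentration-compactness argument adapted to the anisotropic variable exponent setting, combined with the monotonicity in $(A_3)$ to upgrade weak convergence to strong. By Lemma \ref{lembounded} the $(PS)_{C_{\lambda,\beta}}$ sequence $\{u_n\}$ is bounded in $X$, so up to a subsequence $u_n\rightharpoonup u$ in $X$, $u_n\to u$ a.e.\ in $\Omega$, and by Theorem \ref{theo23} $u_n\to u$ strongly in $L^{r(x)}(\Omega)$ and $L^{\ell(x)}(\Omega)$ (since $r,\ell<p_m^\ast$). Extracting once more, Theorem \ref{ccp} provides measures $\mu,\nu\in\mathcal{M}(\overline{\Omega})$, a countable atom set $\{x_j\}_{j\in J}\subset\mathcal{A}$, and the relations \eqref{21}-\eqref{23}.

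The plan is to rule out concentration by contradiction. Suppose $J\neq\emptyset$ and fix $j_0\in J$. Taking the cutoff $\psi_{j_0}$ localized around $x_{j_0}$ as in the proof of Theorem \ref{ccp}, testing $\langle E_{\lambda,\beta}'(u_n),\psi_{j_0}u_n\rangle\to 0$ and sending $n\to\infty$ followed by $\max_i\varepsilon_i\to 0^+$ yields a relation complementary to \eqref{23}. Indeed $(A_2)$ gives $a_i(x,\partial_{x_i}u_n)\partial_{x_i}u_n\ge k_i|\partial_{x_i}u_n|^{p_i(x)}$; the cross-terms $\int u_n a_i(x,\partial_{x_i}u_n)\partial_{x_i}\psi_{j_0}\,dx$ vanish exactly as in \eqref{L3.10}; the contribution $\lambda\int|u_n|^{r(x)}\psi_{j_0}\,dx$ vanishes by the compact embedding of $X$ into $L^{r(x)}(\Omega)$ and the shrinking support; and $\beta\int f(x,u_n)u_n\psi_{j_0}\,dx$ vanishes by the subcritical growth $(f_1)$. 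The outcome is
\[
\bigl(\min_{1\le i\le N}k_i\bigr)\mu_{j_0}\le\nu_{j_0}.
\]
Feeding this into \eqref{23} and splitting according to whether $\mu_{j_0}\le 1$ or $\mu_{j_0}>1$ produces the lower bound
\[
\nu_{j_0}\ge\min\Bigl\{\bigl((\min k_i)^{1/p_M^+}N^{1-p_M^+}S_h\bigr)^{\frac{p_m^\ast(x_{j_0})p_M^+}{p_m^\ast(x_{j_0})-p_M^+}},\ \bigl((\min k_i)^{1/p_m^-}N^{1-p_M^+}S_h\bigr)^{\frac{p_m^\ast(x_{j_0})p_m^-}{p_m^\ast(x_{j_0})-p_m^-}}\Bigr\}.
\]

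For the energy side, write
\[
C_{\lambda,\beta}=\lim_{n\to\infty}\Bigl(E_{\lambda,\beta}(u_n)-\tfrac{1}{\theta_\lambda}\langle E_{\lambda,\beta}'(u_n),u_n\rangle\Bigr).
\]
Thanks to $(A_2)$ (which forces $A_i-\theta_\lambda^{-1}a_i\partial_{x_i}u_n\ge 0$ since $\theta_\lambda\ge r^+>p_M^+$), to the case split in $(f_2)$ (which forces $\lambda(r(x)^{-1}-\theta_\lambda^{-1})\ge 0$), and to $(f_2)$ (which forces $\theta_\lambda^{-1}f(x,u_n)u_n-F(x,u_n)\ge 0$ up to a uniformly bounded remainder on $\{|u_n|\le R\}$), every contribution is nonnegative except the critical one. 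Using $|u_n|^{h(x)}\overset{\ast}{\rightharpoonup}\nu$ with the continuous test function $\theta_\lambda^{-1}-h(x)^{-1}$, together with $x_{j_0}\in\mathcal{A}$ and $h(x_{j_0})=p_m^\ast(x_{j_0})\ge h_{\mathcal{A}}^-$, gives
\[
C_{\lambda,\beta}\ge\int_\Omega\Bigl(\tfrac{1}{\theta_\lambda}-\tfrac{1}{h(x)}\Bigr)d\nu\ge\Bigl(\tfrac{1}{\theta_\lambda}-\tfrac{1}{h_{\mathcal{A}}^{-}}\Bigr)\nu_{j_0}.
\]
Combining with the lower bound on $\nu_{j_0}$ contradicts the hypothesis, so $J=\emptyset$. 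Consequently $\nu=|u|^{h(x)}$, and together with $u_n\to u$ a.e.\ and Lemma \ref{lema 2ccp} this yields $u_n\to u$ in $L^{h(x)}(\Omega)$.

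Finally, from $E_{\lambda,\beta}'(u_n)\to 0$ in $X^\ast$ and $u_n-u\rightharpoonup 0$ in $X$, one has $\langle E_{\lambda,\beta}'(u_n)-E_{\lambda,\beta}'(u),u_n-u\rangle\to 0$, and the three lower-order integrals in this difference vanish by the $L^{h(x)},L^{r(x)},L^{\ell(x)}$ convergences together with the standard Hölder/dominated-convergence arguments. What remains is
\[
\sum_{i=1}^N\int_\Omega\bigl(a_i(x,\partial_{x_i}u_n)-a_i(x,\partial_{x_i}u)\bigr)(\partial_{x_i}u_n-\partial_{x_i}u)\,dx\to 0,
\]
and by the strict monotonicity $(A_3)$ each nonnegative summand tends to zero. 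A standard $(S_+)$-type argument (almost-everywhere subsequence extraction from vanishing monotone integrands, combined with the growth estimate $(A_2)$ and \eqref{L26}) then forces $\partial_{x_i}u_n\to\partial_{x_i}u$ in $L^{p_i(x)}(\Omega)$ for every $i$, and hence $u_n\to u$ in $X$. I expect the main obstacle to be the energy decomposition: making every subcritical and perturbative term sign-definite, and then correctly localizing the surviving critical integral around $x_{j_0}$ so that precisely the constant $h_{\mathcal{A}}^{-}$ appears in the denominator, which is exactly what reproduces the explicit threshold stated in the lemma.
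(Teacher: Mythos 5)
Your proposal follows the same strategy as the paper's proof: bound the $(PS)$-sequence via Lemma~\ref{lembounded}, invoke Theorem~\ref{ccp}, test $E_{\lambda,\beta}'(u_n)$ against $\psi_{j,\varepsilon}u_n$ to derive $(\min_i k_i)\mu_j\le\nu_j$, combine with \eqref{23} and the energy decomposition $E_{\lambda,\beta}(u_n)-\theta_\lambda^{-1}\langle E_{\lambda,\beta}'(u_n),u_n\rangle$ to force $J=\emptyset$, then finish with the $(A_3)$-monotonicity argument. Your treatment of the energy lower bound — integrating the continuous weight $\theta_\lambda^{-1}-h(x)^{-1}$ directly against $\nu$ and using $h(x_{j_0})=p_m^\ast(x_{j_0})\ge h_{\mathcal{A}}^-$ — is a cleaner variant of the paper's passage through the sets $\mathcal{A}_\tau$, but the overall argument and the auxiliary results it rests on are the same.
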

\begin{proof}	
	We can divide the proof into two claims.
	
	\textbf{Claim 1.} $u_n\to u$ strongly in $L^{h(x)}(\Omega)$ as $n \to \infty$.
	
	By applying Lemma \ref{lembounded}, we know that $\{u_{n}\}_{n\in \mathbb{N}}$ is bounded in $X$. We can pass to a subsequence, still labeled $\{u_{n}\}_{n\in \mathbb{N}}$, which converges weakly in $X$. Consequently, there exist positive bounded measures $\mu_i, \nu \in \Omega$ such that	
	\begin{equation} \label{convmeasures}
		\sum_{i=1}^{N}|\partial_{x_i} u_n|^{p_i(x)}\rightharpoonup \mu =\sum_{i=1}^{N}\mu_i  \   \text{ and } \
		|u_n|^{h(x)}	\rightharpoonup  \nu .
	\end{equation}	
	Hence, according to Theorem \ref{ccp}, if $J=\emptyset$, then $u_n \to u $ in $L^{h(x)}(\Omega)$. Let us show that if	
	\begin{equation*}
	\begin{aligned}
		C_{\lambda, \beta } < \Big(\frac{1}{\theta_{\lambda}}-\frac{1}{h_{\mathcal{A}}^{-}}
		\Big) \min \Bigg\{\inf_{j\in J}\bigg((\min_{1\leq i\leq N}k_i)^\frac{1}{p_{M}^{+}} N^{1-p_M^+}S_h\bigg)^{\frac{p_{m}^{\ast}(x_j)p_{M}^{+}}{p_{m}^{\ast}(x_j)-p_{M}^{+}}}, \\
		\inf_{j\in J}\bigg( (\min_{1\leq i\leq N}k_i)^\frac{1}{p_{m}^{-}}N^{1-p_M^+}S_h\bigg)^{\frac{p_{m}^{\ast}(x_j)p_{m}^{-}}{p_{m}^{\ast}(x_j)-p_{m}^{-}}}\Bigg\},
	\end{aligned}
\end{equation*}
	and $\{u_{n}\}_{n\in \mathbb{N}}$ is a (PS)-sequence with energy level $C_{\lambda, \beta }$, then $J=\emptyset$. In fact, we assume that $J\neq \emptyset$, and let $x_j\in \mathcal{A}$ be a singular point of the measures $\mu_i$ and $\nu$.
	
	We consider  $\psi \in C^{\infty}_0(\mathbb{R}^N)$, such that
	$0\leq \psi(x) \leq 1$,  $\psi(0)=1$, $\text{supp}\psi\subset B(0,1)$ and $\|\nabla \psi\|_\infty\leq 2$.
	For any $j\in J$ and $\varepsilon>0$, we define the function		
	$	 \psi_{j,\varepsilon} := \psi \Big(\frac{x-x_j}{\varepsilon}\Big)$, for all $\ x \in  \mathbb{R}^N.$		
	Notice that $\psi_{j,\varepsilon} \in C^{\infty}_0(\mathbb{R}^N, \left[ 0,1\right] )$, $\|\nabla \psi_{j,\varepsilon}\|_\infty\leq \frac{2}{\varepsilon}$ and
	\begin{eqnarray*}
		\label{ineq1}
		\psi_{j,\varepsilon}(x)=
		\begin{cases}
			1,  &  x\in  B_{}(x_j,\varepsilon),\\
			
			0, 	&  x\in \mathbb{R}^N\setminus B_{}(x_j,2\varepsilon). \\
		\end{cases}
	\end{eqnarray*}
	Due to the boundedness of $\{u_{n}\}_{n\in \mathbb{N}}$ in $X$, the sequence
	$\{\psi_{j,\varepsilon}u_{n}\}_{n\in \mathbb{N}}$ is also bounded in $X$. Therefore, we have
	$ \langle E_{\lambda, \beta }'(u_n)(\psi_{j,\varepsilon}u_n)\rangle= o_{n}(1),$
	In other words,
	\begin{equation} \label{ee4.7}
		\begin{aligned}
			\sum^{N}_{i=1}\int_{\Omega}\psi_{j,\varepsilon}a_i(x,\partial_{x_i}u_n)\partial_{x_i}u_n\,\mathrm{d}x +\lambda \int_{\Omega} | u|^{r(x)}\psi_{j,\varepsilon}\, \mathrm{d}x
			\\=
			-\sum^{N}_{i=1}	\int_{\Omega}u_na_i(x,\partial_{x_i}u_n)\partial_{x_i}\psi_{j,\varepsilon}\, \mathrm{d}x
			+\int_{\Omega} | u_n|^{h(x)}\psi_{j,\varepsilon}\, \mathrm{d}x 	
			+\beta \int_{\Omega} f(x,u_n)\psi_{j,\varepsilon}u_n\, \mathrm{d}x	+o_{n}(1).
		\end{aligned}
	\end{equation}
   Now, we proceed to prove the following
	\begin{equation}\label{s48}
		\displaystyle  \lim_{\varepsilon\to 0}\Bigg[
		\limsup_{n\to \infty }\Big|\sum^{N}_{i=1}	\int_{\Omega}u_na_i(x,\partial_{x_i}u_n)\partial_{x_i}\psi_{j,\varepsilon}\, \mathrm{d}x
		\Big|
		\Bigg]=0.
	\end{equation}
	It is worth noting that, thanks to the hypotheses $(\textbf{\textit{A}}_1)$, we only need to establish	
	\begin{equation}\label{s481}
		\displaystyle  \lim_{\varepsilon\to 0}\Bigg[
		\limsup_{n\to \infty }\Big|\sum^{N}_{i=1}	\int_{\Omega}c_{a_i} g_i(x)u_n
		\partial_{x_i}\psi_{j,\varepsilon}\, \mathrm{d}x
		\Big|
		\Bigg]=0,
	\end{equation}
	and
	\begin{equation}\label{s482}
		\displaystyle  \lim_{\varepsilon\to 0}\Bigg[
		\limsup_{n\to \infty }\Big|\sum^{N}_{i=1}	\int_{\Omega}c_{a_i}u_{n}\big|\partial_{x_i}
		u_{n}\big|^{p_{i}(x)-1}
		\partial x_{i}\psi_{j,\varepsilon}\, \mathrm{d}x
		\Big|
		\Bigg]=0.
	\end{equation}
	First, we apply the H\"{o}lder inequality and the boundedness of $\{u_n\}_{n\in \mathbb{N}}$ in $X$, to obtain
	\begin{align*}
		&\Big|\int_{\Omega}u_{n}\big|\partial_{x_i}
		u_{n}\big|^{p_{i}(x)-2}\partial x_{i}u_{n}
		\partial x_{i}\psi_{j,\varepsilon}\, \mathrm{d}x
		\Big|
		\leq 	\int_{\Omega}\Big|\partial_{x_i}
		u_{n}\Big|^{p_{i}(x)-1}   \Big|u_n \partial x_{i}\psi_{j,\varepsilon}\Big|\, \mathrm{d}x\\
		&	\leq 2
		\left\|  \left|	\partial x_{i} u_{n}\right|^{p_i(x)-1}\right\| _{L^{\frac{p_i(x)}{p_i(x)-1}}(\Omega)}\left\| 	\partial x_{i}\psi_{j,\varepsilon} u_{n}\right\| _{L^{p_i(x)}(\Omega)}\\
		&	\leq C \max\Big\{ \Big( 	\int_{\Omega}|u_n|^{p_i(x)}| \partial x_{i}\psi_{j,\varepsilon}|^{p_i(x)}\Big)^{\frac{1}{p_i^{-}}},
		\Big( 	\int_{\Omega}|u_n|^{p_i(x)}| \partial x_{i}\psi_{j,\varepsilon}|^{p_i(x)}\Big)^{\frac{1}{p_i^{+}}}
		\Big\}.	
	\end{align*}
	Now, by applying Lebesgue's Dominated Convergence Theorem, we have
	\begin{align*}
		&\displaystyle	\Big|\int_{\Omega}u_{n}\big|\partial_{x_i}
		u_{n}\big|^{p_{i}(x)-2}\partial x_{i}u_{n}
		\partial x_{i}\psi_{j,\varepsilon}\, \mathrm{d}x
		\Big|\\
		&		\leq C \max\Big\{ \Big( 	\int_{\Omega}|u|^{p_i(x)}| \partial x_{i}\psi_{j,\varepsilon}|^{p_i(x)}\Big)^{\frac{1}{p_i^{-}}},
		\Big( 	\int_{\Omega}|u|^{p_i(x)}| \partial x_{i}\psi_{j,\varepsilon}|^{p_i(x)}\Big)^{\frac{1}{p_i^{+}}}
		\Big\}.	
	\end{align*}
	Furthermore, by the H\"{o}lder inequality
	$$ 	\int_{\Omega}|u|^{p_i(x)}| \partial x_{i}\psi_{j,\varepsilon}|^{p_i(x)} \, \mathrm{d}x\\
	 \leq
		C \Big\| |u|^{p_i(x)} \Big\|_{L^{\frac{N}{N-p_i(x)}}(B(x_j,2\varepsilon))}
		\Big\| |\partial x_{i}\psi_{j,\varepsilon}|^{p_i(x)} \Big\|_{L^{\frac{N}{p_i(x)}}(B(x_j,2\varepsilon))}.
	$$
	From
	$ \displaystyle\int_{B(x_j,2\varepsilon)}|\partial x_{i}\psi_{j,\varepsilon}|^{N}\, \mathrm{d}x
	=  \int_{B(0,2)}|\partial x_{i}\psi_{j,\varepsilon}|^{N}\, \mathrm{d}x,$
	we derive
	\begin{align*}
		&	\displaystyle\Big\| |\partial x_{i}\psi_{j,\varepsilon}|^{p_i(x)} \Big\|_{L^{\frac{N}{p_i(x)}}(B(x_j,2\varepsilon))}\\
		& \leq   \max\Bigg\{ \Big( 	\int_{B(x_j,2\varepsilon)}|\partial x_{i}\psi_{j,\varepsilon}|^{N}\mathrm{d}x\Big)^{\frac{1}{\big(\frac{N}{p_i(x)}\big)^{+}}}\\
		&\qquad
\times
		\Big( 	\int_{B(x_j,2\varepsilon)}|\partial x_{i}\psi_{j,\varepsilon}|^{N}\mathrm{d}x\Big)^{\frac{1}{\big(\frac{N}{p_i(x)}\big)^{-}}}
		\Bigg\} \leq C,
	\end{align*}
	for some positive constant $C$, which is independent of $\varepsilon$. Therefore,
	\begin{align*}
	&	\displaystyle \limsup_{n\to \infty }	\Big|\int_{\Omega}u_{n}\big|\partial_{x_i}
		u_{n}\big|^{p_{i}(x)-2}\partial x_{i}u_{n}
		\partial x_{i}\psi_{\varepsilon}\, \mathrm{d}x
		\Big|\\
		& \leq  C \max \Bigg\{\Big\| |u|^{p_i(x)} \Big\|_{L^{\frac{N}{N-p_i(x)}}(B(x_j,2\varepsilon))}^{\frac{1}{p_i^{-}}},\Big\| |u|^{p_i(x)} \Big\|_{L^{\frac{N}{N-p_i(x)}}(B(x_j,2\varepsilon))}^{\frac{1}{p_i^{+}}}
		\Bigg\}.
	\end{align*}
	However,
	$$ \Big\| |u|^{p_i(x)} \Big\|_{L^{\frac{N}{N-p_i(x)}}(B(x_j,2\varepsilon))}\\
		 \leq   \max\Bigg\{ \Big( 	\int_{B(x_j,2\varepsilon)}|u|^{p_i^{\ast}(x)}\mathrm{d}x\Big)^{\frac{1}{\big(\frac{N}{N-p_i(x)}\big)^{+}}},
		\Big( 	\int_{B(x_j,2\varepsilon)}|u|^{p_i^{\ast}(x)}\mathrm{d}x\Big)^{\frac{1}{\big(\frac{N}{N-p_i(x)}\big)^{-}}}
		\Bigg\},
	$$
	so it follows that
	$$ \lim_{\varepsilon\to 0} \limsup_{n\to \infty }	\Big|\int_{\Omega}u_{n}\big|\partial_{x_i}
	u_{n}\big|^{p_{i}(x)-1}
	\partial x_{i}\psi_{j,\varepsilon}\, \mathrm{d}x
	\Big|
	=0.
	$$
	Similarly, we can check \eqref{s481}. Hence, we have completed the proof of  \eqref{s48}.\par
	
	On the other hand, by using  assumption $(\textbf{\textit{f}}_1)$, Theorem \ref{theo23}, and Lebesgue's Dominated Convergence Theorem, we see that
	\begin{equation} \label{ee49}			
		\lim_{n\to\infty} \int_{\Omega}f(x,u_n)u_n\psi_{j,\varepsilon}\, \mathrm{d}x=
		\int_{\Omega}f(x,u)u\psi_{j,\varepsilon}\, \mathrm{d}x, 
		\ 
		\lim_{n\to\infty}\int_{\Omega}| u_n|^{r(x)}\psi_{j,\varepsilon}\, \mathrm{d}x
		=\int_{\Omega}| u|^{r(x)}\psi_{j,\varepsilon}\, \mathrm{d}x.
			\end{equation}
	Thus, when $\varepsilon\to 0$, we get
	\begin{equation} \label{ee50}
		\lim_{\varepsilon\to0}
		\int_{\Omega}f(x,u)u\psi_{\varepsilon}\, \mathrm{d}x=0
		\  \text{ and }
		\lim_{\varepsilon\to0}\int_{\Omega}| u|^{r(x)}\psi_{j,\varepsilon}\, \mathrm{d}x=0.
	\end{equation}
	On the other hand,
	$$
	\lim_{\varepsilon \to 0}\int_{\Omega}\psi_{j,\varepsilon}d\mu_{i}
	=\mu_i \psi(0) \   \text{and }
	\lim_{\varepsilon \to 0}\int_{\partial\Omega}\psi_{j,\varepsilon}d\nu
	=\nu \psi(0).
	$$
	Since $\psi_{j,\varepsilon}$ has compact support, by  taking  the limit $n\to \infty$ and $\varepsilon \to 0$ in \eqref{ee4.7}, from relations \eqref{s48}, \eqref{ee49} and \eqref{ee50}, we get
	$$
	\lim_{\varepsilon \to 0} \left[
	\limsup_{n\to +\infty } \Big(\sum^{N}_{i=1}\int_{\Omega}\psi_{j,\varepsilon}a_i(x,\partial_{x_i}u_n)\partial_{x_i}u_n\,\mathrm{d}x\Big)\right] =\nu_{j}.	
	$$
	By assumption $(\textbf{\textit{A}}_2)$, we have
	\begin{equation}
		\lim_{\varepsilon \to 0} \left[
		\limsup_{n\to +\infty } \Big(\sum^{N}_{i=1}\int_{\Omega}k_i\psi_{j,\varepsilon}|\partial_{x_i}u_n|^{p_i(x)}\,\mathrm{d}x\Big)\right]\leq \nu_{j},	
	\end{equation}
	hence
	\begin{equation}
		\min_{ i\in \{1,\dots,N\}}k_i \mu_{j} \leq \nu_{j}  \text{ for all  } j \in J.
	\end{equation}
Therefore, by invoking relation \eqref{23}, we can deduce that
		\begin{equation*}
		\begin{aligned}
		\nu_j \geq \min &\Bigg\{\bigg((\min_{1\leq i\leq N}k_i)^\frac{1}{p_{M}^{+}} N^{1-p_M^+}S_h\bigg)^{\frac{p_{m}^{\ast}(x_j)p_{M}^{+}}{p_{m}^{\ast}(x_j)-p_{M}^{+}}}, 
		 \bigg( (\min_{1\leq i\leq N}k_i)^\frac{1}{p_{m}^{-}}N^{1-p_M^+}S_h\bigg)^{\frac{p_{m}^{\ast}(x_j)p_{m}^{-}}{p_{m}^{\ast}(x_j)-p_{m}^{-}}}\Bigg\}   \text{ for all  } j \in J.
	\end{aligned}
  \end{equation*}
	Consequently,
	\begin{align*}
		\int_{\Omega}|u_n|^{h(x)}\mathrm{d}x &\to \int_{\Omega}\mathrm{d}\nu 
		\geq \int_{\Omega}|u|^{h(x)}\mathrm{d}x
			+ \min \Bigg\{\bigg((\min_{1\leq i\leq N}k_i)^\frac{1}{p_{M}^{+}} N^{1-p_M^+}S_h\bigg)^{\frac{p_{m}^{\ast}(x_j)p_{M}^{+}}{p_{m}^{\ast}(x_j)-p_{M}^{+}}}, \\
	&\qquad	\bigg( (\min_{1\leq i\leq N}k_i)^\frac{1}{p_{m}^{-}}N^{1-p_M^+}S_h\bigg)^{\frac{p_{m}^{\ast}(x_j)p_{m}^{-}}{p_{m}^{\ast}(x_j)-p_{m}^{-}}}\Bigg\}\sum_{j\in J}\delta_{x_j}\\
		&\geq \int_{\Omega}|u|^{h(x)}\mathrm{d}x 	+\min \Bigg\{\bigg((\min_{1\leq i\leq N}k_i)^\frac{1}{p_{M}^{+}} N^{1-p_M^+}S_h\bigg)^{\frac{p_{m}^{\ast}(x_j)p_{M}^{+}}{p_{m}^{\ast}(x_j)-p_{M}^{+}}},  \\
		&\qquad
		\bigg( (\min_{1\leq i\leq N}k_i)^\frac{1}{p_{m}^{-}}N^{1-p_M^+}S_h\bigg)^{\frac{p_{m}^{\ast}(x_j)p_{m}^{-}}{p_{m}^{\ast}(x_j)-p_{m}^{-}}}\Bigg\}\text{Card}J.
	\end{align*}
	If Card$(J)=\infty$, we get a contradiction.
	On the other hand, by assumptions $(\textbf{\textit{A}}_2)$  and $(\textbf{\textit{f}}_2)$, we have
	\begin{align*}
	&	E_{\lambda, \beta }(u_{n})-   \frac{1}{\theta_{\lambda}}\langle E_{\lambda, \beta}'(u_{n}),u_{n}\rangle	
		\geq \Big(\frac{1}{p^{+}_{M}}-\frac{1}{\theta_{\lambda}}\Big)\sum^{N}_{i=1}
		\int_{\Omega}
		\big|\partial_{x_i}u_n\big|^{p_{i}(x)}
		\, \mathrm{d}x   +\lambda\Big(\frac{1}{\tilde{r}}-\frac{1}{\theta_{\lambda}}\Big)
		\int_{\Omega}| u_{n}|^{r(x)}\, \mathrm{d}x\\
		&\    +\int_{\Omega}\Big(\frac{1}{\theta_{\lambda}}-\frac{1}{h^{-}}
		\Big)| u_{n}|^{h(x)}\, \mathrm{d}x +	\beta  \int_{\Omega}\left(F(x,u_{n})- f(x,u_{n})\frac{u_n}{\theta_{\lambda}}\right)\, \mathrm{d}x
		\geq \Big(\frac{1}{\theta_{\lambda}}-\frac{1}{h^{-}}
		\Big)   \int_{\Omega}| u_{n}|^{h(x)}\, \mathrm{d}x .
	\end{align*}
	Now, setting $\mathcal{A}_{\tau} =\cup_{x\in \mathcal{A}}(B(x,\tau)\cap \Omega)= \{ x\in \Omega : \text{dist}(x, \mathcal{A})<\tau \}$, when $n\to +\infty$, we find
	\begin{align*}
		C_{\lambda,\beta}
		&\geq  \Big(\frac{1}{\theta_{\lambda}}-\frac{1}{h_{\mathcal{A}_{\tau}}^{-}}
		\Big)  \Big( \int_{\Omega}| u|^{h(x)}\, \mathrm{d}x + \sum_{j\in J} \nu_j\delta_{x_j}\Big)\\
		&\geq 	\Big(\frac{1}{\theta_{\lambda}}-\frac{1}{h_{\mathcal{A}_{\tau}}^{-}}
		\Big)\min \Bigg\{\inf_{j\in J}\bigg((\min_{1\leq i\leq N}k_i)^\frac{1}{p_{M}^{+}} N^{1-p_M^+}S_h\bigg)^{\frac{p_{m}^{\ast}(x_j)p_{M}^{+}}{p_{m}^{\ast}(x_j)-p_{M}^{+}}}, \\
		& \qquad \inf_{j\in J}\bigg( (\min_{1\leq i\leq N}k_i)^\frac{1}{p_{m}^{-}}N^{1-p_M^+}S_h\bigg)^{\frac{p_{m}^{\ast}(x_j)p_{m}^{-}}{p_{m}^{\ast}(x_j)-p_{m}^{-}}}\Bigg\}.
	\end{align*}
	Since $\tau>0$ and arbitrary, and $h$ is continuous, we obtain
	\begin{align*}
		C_{\lambda,\beta}
		&\geq 	\Big(\frac{1}{\theta_{\lambda}}-\frac{1}{h_{\mathcal{A}}^{-}}
		\Big) \min \Bigg\{\inf_{j\in J}\bigg((\min_{1\leq i\leq N}k_i)^\frac{1}{p_{M}^{+}} N^{1-p_M^+}S_h\bigg)^{\frac{p_{m}^{\ast}(x_j)p_{M}^{+}}{p_{m}^{\ast}(x_j)-p_{M}^{+}}},\\
	&	\qquad  \inf_{j\in J}\bigg( (\min_{1\leq i\leq N}k_i)^\frac{1}{p_{m}^{-}}N^{1-p_M^+}S_h\bigg)^{\frac{p_{m}^{\ast}(x_j)p_{m}^{-}}{p_{m}^{\ast}(x_j)-p_{m}^{-}}}\Bigg\}.
	\end{align*}
	Therefore, if the condition
	\begin{align*}
		C_{\lambda,\beta}
		&<	\Big(\frac{1}{\theta_{\lambda}}-\frac{1}{h_{\mathcal{A}}^{-}}
		\Big) \min \Bigg\{\inf_{j\in J}\bigg((\min_{1\leq i\leq N}k_i)^\frac{1}{p_{M}^{+}} N^{1-p_M^+}S_h\bigg)^{\frac{p_{m}^{\ast}(x_j)p_{M}^{+}}{p_{m}^{\ast}(x_j)-p_{M}^{+}}}, \\
		&\qquad \inf_{j\in J}\bigg( (\min_{1\leq i\leq N}k_i)^\frac{1}{p_{m}^{-}}N^{1-p_M^+}S_h\bigg)^{\frac{p_{m}^{\ast}(x_j)p_{m}^{-}}{p_{m}^{\ast}(x_j)-p_{m}^{-}}}\Bigg\},
	\end{align*}
   holds, then the index set $J$ is empty, and consequently,
	$\rho_{h}(u_n)\to \rho_{h}(u)$ as $ n\to \infty$. Thus, by applying Lemma \ref{lema 2ccp} and relation \eqref{L26}, we can conclude that $u_n \to u$ strongly in $L^{h(x)}(\Omega)$ as $n \to +\infty$. This completes the proof of Claim 1.\\
	
	\textbf{Claim 2.}  $u_n\to u$ strongly in $X$ as
	$n\to +\infty$.
	
	Since, $\{u_n\}_{n\in \mathbb{N}}$ is bounded in $X$ and $X$ is a reflexive space, there exists a subsequence, still denoted by $\{u_n\}_{n\in \mathbb{N}}$ and
	$u\in X$ such that
	\begin{equation}\label{L15}
		u_n\rightharpoonup u \  \text{weakly in } X.
	\end{equation}
	By Theorem \ref{theo23}, we know that
	$X$  is compactly embedded in
	$L^{h(x)}(\Omega)$, where $1\leq h(x) \leq p_{m}^{\ast}(x)$.
	Therefore, since $u_n\rightharpoonup u$ in the Banach space
	$X$, we can infer that
	\begin{equation}\label{stea}
		u_n\to u\  \text{in  $L^{h(x)}(\Omega)$}.
	\end{equation}
	Using \eqref{PsCv} and \eqref{L15} and the fact that
	$$
	|\langle E_{\lambda, \beta }'(u_n), u_n-u\rangle|\leq\|E_{\lambda, \beta }'(u_n)\|_{X^*}\,
	\|u_n-u\|_{X},
	$$
	we see that
	$
\displaystyle	\lim_{n\to\infty}|\langle E_{\lambda, \beta }'(u_n),u_n-u\rangle|=0,
	$
	that is,
	\begin{equation}\label{star0}
		\begin{aligned}
			&\lim_{n\to\infty}\Bigg[\int_\Omega\sum_{i=1}^N
			a_i(x,\partial_{x_i}u_n)
			\big(\partial_{x_i}u_n-\partial_{x_i}u\big)\,dx+\lambda\int_\Omega
			|u_n|^{r(x)-2}u_n(u_n-u)\,dx\\
			&\   \qquad\   -\int_\Omega
			|u_n|^{h(x)-2}u_n(u_n-u)\,dx
			-\beta \int_\Omega f(x,u_n)(u_n-u)\,dx\Bigg]=0.
		\end{aligned}
	\end{equation}
	Hence, by applying the H\"older inequality, we get
	\begin{equation}\label{saispeprim}
		\begin{aligned}
			\Big|\int_\Omega |u_n|^{r(x)-2}u_n(u_n-u)\,dx\Big|
			&\leq 2\big\||u_n|^{r(x)-1}\big\|_{L^{r'(x)}(\Omega)}
			\big\|u_n-u\big\|_{L^{r(x)}(\Omega)}.
		\end{aligned}
	\end{equation}
	Assume to the contrary, that
	$\big\||u_n|^{r(x)-1}\big\|_{L^{r'(x)}(\Omega)}\to\infty$,  and using relation \eqref{L26}, we obtain
	$$
	\int_\Omega\big(|u_n|^{r(x)-1}\big)^{r'(x)}\,dx\to+\infty
	\text{ if and only if}
	\int_\Omega\big(|u_n|^{r(x)}\big)\,dx\to+\infty,$$
which, in turn, implies $ \|u_n\|_{L^{r(x)}(\Omega)}\to+\infty.
	$
	However, we know that $$\|u_n\|_{L^{r(x)}(\Omega)}\to \|u\|_{L^{r(x)}(\Omega)},$$ so we get a contradiction. Therefore, by relation \eqref{saispeprim} and Theorem \ref{theo23}, we obtain
	\begin{equation}\label{star1}
		\lim_{n\to\infty}\int_\Omega |u_n|^{r(x)-2}u_n(u_n-u)\,dx=0.
	\end{equation}
	Moreover, given assumption {\rm ($\textbf{\textit{f}}_2$)} and the H\"older inequality, we derive
	\begin{align*}
	&	\Big|\int_\Omega f(x,u_n)(u_n-u)\,dx\Big|
		\leq \int_\Omega |f(x,u_n)||u_n-u|\,dx\\
		&\leq
		C_f\int_\Omega|u_n-u|\,dx+C_f\int_\Omega |u_n|^{\ell(x)-1}|u_n-u|\,dx\\
		&\leq C_f\|u_n-u\|_{L^1(\Omega)}+2C_f\||u_n|^{\ell(x)-1}
		\|_{L^{\ell'(x)}(\Omega)}\|u_n-u\|_{L^{\ell(x)}(\Omega)},
	\end{align*}
	so, by relatios \eqref{stea} and \eqref{L26}, we get as above
	\begin{equation}\label{star2}
		\lim_{n\to\infty}\int_\Omega f(x,u_n)(u_n-u)\,dx=0,
	\end{equation}
	hence, by combining relations \eqref{star0}, \eqref{star1} and \eqref{star2}, we find
	\begin{equation}\label{L4155}
		\lim_{n\to\infty}\int_\Omega\sum_{i=1}^N	a_i(x,\partial_{x_i}u_n)
		\big(\partial_{x_i}u_n-\partial_{x_i}u\big)\,dx=0.
	\end{equation}
	By assumption {\rm ($\textbf{\textit{A}}_2$)}, we obtain	
	\begin{equation}\label{L415}
		\lim_{n\to\infty}\int_\Omega\sum_{i=1}^N	|\partial_{x_{i}}u_n|^{p_{i}(x)-2}\partial_{x_{i}}u_n
		\big(\partial_{x_i}u_n-\partial_{x_i}u\big)\,dx=0,
	\end{equation}
	and since $u_n\rightharpoonup u$ in $X$, we have
	\begin{equation}\label{L416}
		\lim_{n\to\infty}\int_\Omega\sum_{i=1}^N	|\partial_{x_{i}}u|^{p_{i}(x)-2}\partial_{x_{i}}u
		\big(\partial_{x_i}u_n-\partial_{x_i}u\big)\,dx = 0.
	\end{equation}
	By combining relations \eqref{L415} and \eqref{L416}, we can infer that
	\begin{equation}\label{L417}
		\lim_{n\to\infty}\sum_{i=1}^N\int_\Omega\Big( 	|\partial_{x_{i}}u_n|^{p_{i}(x)-2}\partial_{x_{i}}u_n
		-|\partial_{x_{i}}u|^{p_{i}(x)-2}\partial_{x_{i}}u\Big)
		\big(\partial_{x_i}u_n-\partial_{x_i}u\big)\,dx=0.
	\end{equation}
	Hence, by employing elementary inequalities (see, e.g., Di Benedetto \cite[Chapter I]{Dib1}), for any $\gamma > 1$, there exists a positive constant $C_{\gamma}$ such that the following inequalities hold
	\begin{eqnarray}
		\label{ineq1}
		\langle |\xi|^{\gamma-2}\xi -|\varrho |^{\gamma-2}\varrho, \xi -\varrho \rangle  &\geq
		\begin{cases}
			C_{\gamma}|\xi -\varrho|^{\gamma}  &\  \text{if }\gamma \geq2\\
		    &\	\\
			C_{\gamma}\frac{|\xi -\varrho|^{2}}{(|\xi| +|\varrho| )^{2-\gamma}}, (\xi,\varrho) \neq (0,0)	&\  \text{if } 1<\gamma <2
		\end{cases}
	\end{eqnarray}
	for all  $ \xi , \varrho \in \mathbb{R} $.
	So, by relation \eqref{L417} and inequalities \eqref{ineq1}, we see that
	\begin{equation}\label{L419}
		\lim_{n\to\infty}\sum_{i=1}^N\int_\Omega	\Big|\partial_{x_{i}}u_n-\partial_{x_{i}}u\Big|^{p_{i}(x)}\,dx=0.
	\end{equation}
	Therefore, we can conclude that $u_n\to u \text{ strongly in } X$. This completes the proof of Lemma \ref{lemconv}.  
\end{proof}
\section{Proofs of Theorems \ref{thm1} and  \ref{thm2}}\label{sec 5}
\textit{Proof of Theorem \ref{thm1}}.
	The proof is a direct consequence of the Mountain Pass Theorem, along with Lemma \ref{lembounded} and Lemma \ref{lemconv}. More precisely, it suffices to verify that $E_{\lambda,\beta}$ has the mountain pass geometry and that $E_{\lambda,\beta}(tu) <0$ for some $t>0$.
	According to assumption  {\rm ($\textbf{\textit{f}}_2$)}, there exist $R>0$ and $\theta_{\lambda}$ such that
	$
	F(x,\xi)\geq C'_{f}| \xi|^{\theta_{\lambda}}, \   \text{for all } x\in \overline{\Omega},| \xi|\geq R.
	$
	So, for $u\in X$ and any $t> 1$, we have
	\begin{align*}
		E_{\lambda,\beta }(tu)
		&\leq\sum_{i=1}^N\int_{\Omega} c_{a_i}\bigg(|g_i(x)||\partial_{x_i}(t u)|
		+\frac{|\partial_{x_i}(t u)|^{p_i(x)}}{p_i(x)}\,\bigg)dx
		+\lambda\int_{\Omega}\frac{1}{r(x)}| tu|^{r(x)}\, \mathrm{d}x
		-\frac{1}{h^{+}}\int_{\Omega}| tu|^{h(x)}\, \mathrm{d}x\\
		&\   -\beta \int_{\{x\in\Omega:|u(x)|>R\}}C_{f}| t u|^{\theta_{\lambda}}\, \mathrm{d}x
		-\beta \text{ meas}(\Omega)\inf\big\{F(x,\xi):x\in\Omega,\,|\xi|\leq R\big\}\\
		&\leq 2t\max_{1\leq i\leq N}c_{a_i}\sum_{i=1}^N
		\|g_i\|_{L^{p'_i(x)}(\Omega)}
		\|\partial_{x_i}u\|_{L^{p_i(x)}(\Omega)}	
		 +t^{p_M^+}\frac{\max_{1\leq i\leq N}c_{a_i}}{p_m^-}
		\sum_{i=1}^N\int_\Omega|\partial_{x_i}u|^{p_i(x)}\,dx\\
		&\  	+\lambda t^{\tilde{r}} \int_{\Omega}\frac{1}{r(x)}| u|^{r(x)}\, \mathrm{d}x
		-\frac{ t^{h^{-}}}{h^{+}}\int_{\Omega}| u|^{h(x)}\, \mathrm{d}x
		  	-C_{f}t^{\theta_{\lambda}}\int_{\Omega}| u|^{\theta_{\lambda}}\, \mathrm{d}x\\
		&\ -\beta \text{ meas}(\Omega)\inf\big\{F(x,\xi):x\in\Omega,\,|\xi|\leq R\big\},
	\end{align*}
	with $\tilde{r}=r^{+}$ if $\lambda> 0$ and $\tilde{r}=r^{-}$
	if $\lambda\leq 0$. So, by assumptions {\rm ($\textbf{\textit{H}}$)} and {\rm ($\textbf{\textit{f}}_2$)}, we have, for all  $\lambda\in\mathbb{R}$, that
	$E_{\lambda,\beta}(tu)\to-\infty\text{ as } t\to +\infty $.				
	
	On the other hand,  based on the assumptions {\rm ($\textbf{\textit{f}}_2$)} and {\rm ($\textbf{\textit{f}}_3$)}, it follows that for any $\varepsilon >0$,  there exists a constant $C(\varepsilon)$ such that
	\begin{equation}
		| F(x,\xi)|\leq \varepsilon| \xi|^{p^{+}_{M}}+C(\varepsilon)| \xi |^{\ell(x)}		\		
		\hbox{for  a.e.}	\
		x\in\Omega	\
		\hbox{ and  all}	\
		\xi\in\mathbb{R}.
	\end{equation}
	Therefore, we find 			
	\begin{equation} \label{e3.1}
		\begin{aligned}
			E_{\lambda,\beta}(u)
			&\geq \frac{\min_{1\leq i\leq N}k_i}{p^{+}_{M}}\sum^{N}_{i=1}\int_{\Omega} \big|\partial_{x_{i}} u \big|^{p_{i}(x)}\,
			\mathrm{d}x+\lambda\int_{\Omega}\frac{1}{r(x)}| u|^{r(x)}\, \mathrm{d}x \\
			&\   \quad 	-\frac{1}{h^{-}}\int_{\Omega}| u|^{h(x)}\, \mathrm{d}x
			-\beta\int_{\Omega}(\varepsilon| u|^{p^{+}_{M}}+C(\varepsilon)| u|^{\ell(x)})
			\, \mathrm{d}x \\
			&\geq\frac{\min_{1\leq i\leq N}k_i}{P^{+}_{M}N^{P^{+}_{M}-1}}
			\|u\|^{P^{+}_{M}}_{\overrightarrow{p}(x)}
			-\frac{|\lambda|}{r^{-}}\int_{\Omega}| u|^{r(x)}\, \mathrm{d}x\\
			&\   \quad	-\frac{1}{h^{-}}\int_{\Omega}| u|^{h(x)}\, \mathrm{d}x
			-\beta \int_{\Omega}\varepsilon| u|^{p^{+}_{M}}\, \mathrm{d}x  - \beta \int_{\Omega}C(\varepsilon)| u|^{\ell(x)}\, \mathrm{d}x.
		\end{aligned}
	\end{equation}		
	Consider 	$0<  \|u\|_{\overrightarrow{p}(x)} <1$. By using relations \eqref{L24}, \eqref{L25} and Theorem \ref{theo23}, we have		
	\begin{equation} \label{E5.3}
		\begin{split}
			E_{\lambda,\beta }(u)
			&\geq\frac{\min_{1\leq i\leq N}k_i}{p^{+}_{M}N^{P^{+}_{M}-1}}\|u\|_{\overrightarrow{p}(x)}^{P^{+}_{M}}
			-\frac{|\lambda| C}{r^{-}}\max\Big\{\|u\|_{\overrightarrow{p}(x)}^{r^{+}},\|u\|_{\overrightarrow{p}(x)}^{r^{-}}\Big\} \\
			&\quad 	-\frac{1}{h^{-}}C'_{1}\|u\|_{\overrightarrow{p}(x)}^{h^{-}}	
			-\beta \varepsilon C'_{2}\|u\|_{\overrightarrow{p}(x)}^{P^{+}_{M}}-
			\beta C(\varepsilon) C'_{3}\|u\|_{\overrightarrow{p}(x)}^{\ell^{-}}\\
			&\geq
			\Big(\frac{\min_{1\leq i\leq N}k_i}{p^{+}_{M}N^{P^{+}_{M}-1}}
			 - \beta \varepsilon C'_{2}\Big)\|u\|_{\overrightarrow{p}(x)}^{P^{+}_{M}} -\frac{|\lambda| C}{r^{-}}\max\Big\{\|u\|_{\overrightarrow{p}(x)}^{r^{+}},\|u\|_{\overrightarrow{p}(x)}^{r^{-}}\Big\} \\
		&\  \quad -\frac{C'_{1}}{h^{-}}\|u\|_{\overrightarrow{p}(x)}^{h^{-}}
			-\beta C(\varepsilon) C'_{3}\|u\|_{\overrightarrow{p}(x)}^{\ell^{-}}.
		\end{split}
	\end{equation}		
	Set $\varepsilon=\frac{\min_{1\leq i\leq N}\{k_i\}}{2\beta C^{'}_{2} p^{+}_{M}N^{p^{+}_{M}-1}}$ and
	$$
	\Phi(t)=\frac{ \min_{1\leq i\leq N}\{k_i\}}{2p^{+}_{M}N^{p^{+}_{M}-1}}t^{p_M^{+}}
	-\frac{|\lambda| C}{r^{-}}\max\Big\{t^{r^{+}},t^{r^{-}}\Big\}
	-\frac{C'_{1}}{h^{-}}t^{h^{-}}
	-\beta C(\varepsilon) C'_{3}t^{\ell^{-}}.
	$$ Since $p^{+}_{M}< \min\{r^{-},\ell^{-} \}<h^{-}$, we see that there
	exists $\rho>0$ such that $\underset{t\geq0}\max \Phi(t)=\Phi(\rho)$. Hence,
	by \eqref{E5.3}, there exists $\rho>0$ such that
	$	E_{\lambda,\beta }(u)\geq \mathcal{R}>0 \   \text{as } \left\| u\right\| =\rho.$	
	This yields the existence of an element $\tilde{u}$ of $X$ such that $ E_{\lambda,\beta }(\tilde{u})<0$. Consequentely, the critical value is
	$$C_{\lambda,\beta } := \inf_{\phi \in \Gamma} \max_{t\in \left[ 0,1\right]  }E_{\lambda,\beta }(\phi(t)),$$
	where
	$\Gamma= \{ \phi \in C\big( \left[ 0,1\right],X\big) :\phi(0)=0, \phi (1)=\tilde{u}\}. $
	This concludes the proof of Theorem \ref{thm1}. \qed 		
	\\
		\vskip 0.1cm		
In the sequel, we shall  prove under some symmetry condition on the function $f$ that \eqref{e1.1} possesses infinitely many nontrivial solutions in the case $a_i(x,\xi):=|\xi|^{p_i(x)-2}\xi$,  for all $i\in\{1,\dots,N\}$.

\eject

\textit{Proof of Theorem \ref{thm2}}.
	We shall use the $\mathbb{Z}_2$-symmetric version of the Mountain Pass Theorem \ref{SPMT}, to get the proof of Theorem \ref{thm2}. By assumption ($\textbf{\textit{f}}_4$), the function $f$ is even, the functional $E_{\lambda,\beta }$ is even, too. It suffices to check the condition ($\mathcal{I}_2 '$). In fact, by using condition {\rm ($\textbf{\textit{f}}_2$)}, we have	
	$
	F(x,\xi)\geq C_{1}| \xi|^{\theta_{\lambda}}-C_{2},\
	\text{for all }(x,\xi)\in\Omega\times\mathbb{R}.
	$
	Then  there exist positive constants $C'_{1},C'_{2}$ and $ C'_{3}$ such that
		\begin{equation*}
		\begin{split}
	E_{\lambda,\beta}(u)&\leq \frac{C'_{1}}{p^{-}_{m}}\sum^{N}_{i=1}
	\int_{\Omega}\big|\partial_{x_i}u\big|^{p_{i}(x)}\,
	\mathrm{d}x+\frac{\lambda}{\tilde{r}}\int_{\Omega}
	| u|^{r(x)}\, \mathrm{d}x
	 \quad
	-\frac{1}{h^{+}}\int_{\Omega}
	| u|^{h(x)}\, \mathrm{d}x
	-C'_{2}\| u\|_{L^{\theta_{\lambda}}(\Omega)}^{\theta_{\lambda}}+ C'_{3},
			\end{split}
		\end{equation*}
	with $\tilde{r}=r^{-}$ if $\lambda> 0$ and $\tilde{r}=r^{+}$ if
	$\lambda\leq 0$. On the other hand, we have the following inequality
	$ \displaystyle	\sum^{N}_{i=1}\big\|\partial_{x_{i}} u \big\|^{p^{+}_{M}}_{L^{p_{i}(x)}
		(\Omega)}\leq C\Big(\sum^{N}_{i=1}\big\|\partial_{x_{i}} u
	\big\|_{L^{p_{i}(x)}(\Omega)}\Big)^{p^{+}_{M}},
	$
	with $C$ a positive constant. So, by using the last ine\-qua\-li\-ty above and Theorem \ref{theo23}, we obtain, in the case when $\lambda > 0$, that
	$$
	E_{\lambda,\beta}(u)\leq\frac{C'}{p^{-}_{m}}\|u\|_{\overrightarrow{p}(x)}^{p^{+}_{M}}
	+\frac{C_{4}}{\tilde{r}}\|u\|_{\overrightarrow{p}(x)}^{r^{+}}
	-\frac{1}{h^{+}}\int_{\Omega}
	| u|^{h(x)}\, \mathrm{d}x
	-C'_{2}\| u\|_{\overrightarrow{p}(x)}^{\theta_{\lambda}}+C'_{3}.
	$$
	Let $u \in X$ be arbitrary but fixed. We put
	$$ \Omega_{<}=\left\lbrace x\in \Omega  : \left| u(x)\right| <1\right\rbrace  \text{ and } \Omega_{\geq}=\Omega  \backslash \Omega_{<}.$$
	Then  we have	
	\begin{align*}
		E_{\lambda,\beta}(u)	
		&\leq\frac{C'}{p^{-}_{m}}\|u\|_{\overrightarrow{p}(x)}^{p^{+}_{M}}
		+\frac{C_{4}}{\tilde{r}}\|u\|_{\overrightarrow{p}(x)}^{r^{+}}
		-\frac{1}{h^{+}}\int_{\Omega}
		| u|^{h(x)}\, \mathrm{d}x
		-C'_{2}\| u\|_{\overrightarrow{p}(x)}^{\theta_{\lambda}}+C'_{3} \\
		&\leq\frac{C'}{p^{-}_{m}}\|u\|_{\overrightarrow{p}(x)}^{p^{+}_{M}}
		+\frac{C_{4}}{\tilde{r}}\|u\|_{\overrightarrow{p}(x)}^{r^{+}}
		-\frac{1}{h^{+}}\int_{\Omega_{\geq}}
		| u|^{h^-}\, \mathrm{d}x
		-C'_{2}\| u\|_{\overrightarrow{p}(x)}^{\theta_{\lambda}} +C'_{3}\\
		&\leq\frac{C'}{p^{-}_{m}}\|u\|_{\overrightarrow{p}(x)}^{p^{+}_{M}}
		+\frac{C_{4}}{\tilde{r}}\|u\|_{\overrightarrow{p}(x)}^{r^{+}}
		-\frac{1}{h^{+}}\int_{\Omega}
		| u|^{h^-}\, \mathrm{d}x
		+\frac{1}{h^{+}}\int_{\Omega_{<}}
		| u|^{h^-}\, \mathrm{d}x  
		 -C'_{2}\| u\|_{\overrightarrow{p}(x)}^{\theta_{\lambda}} +C'_{3}.
	\end{align*}
	Since there exists $C_4'>0$ such that	
	$
	\frac{1}{h^{+}}\int_{\Omega_{<}}
	| u|^{h^-}\, \mathrm{d}x \leq C_4', \   \text{ for all } u \in X,$
	we get,	
	$
	E_{\lambda,\beta }(u)	
	\leq\frac{C'}{p^{-}_{m}}\|u\|_{\overrightarrow{p}(x)}^{p^{+}_{M}}
	+\frac{C_{4}}{\tilde{r}}\|u\|_{\overrightarrow{p}(x)}^{r^{+}}
	-\frac{1}{h^{+}}\int_{\Omega}
	| u|^{h^-}\, \mathrm{d}x
	-C'_{2}\| u\|_{\overrightarrow{p}(x)}^{\theta_{\lambda}}
	+C_5',
	$
	for all $u\in X$. On the other hand,  consider the functional  $\left| .\right|_{h^-}: X\to \mathbb{R}$, which is defined as follows
	$$\left| u \right|_{h^-} = \Big(\int_{\Omega}| u|^{h^-}dx\Big)^{1/h^-}.$$
	This functional defines a norm on $X$. 	Let $X_1$ be a fixed finite-dimensional subspace of $X$. So, $\left| .\right|_{h^-}$ and $\left\| .\right\|_{\overrightarrow{p}(x)}$ are equivalent norms, implying the existence of a positive constant  $C_{5}=C( X_1)$ such that
	$ \left\| u\right\|_{\overrightarrow{p}(x)}^{h^-} \leq C_{5}\left| u\right|_{h^-}^{h^-}, \text{ for all } u \in X_1.$
	Consequently, we have established the existence of a positive constant $C_{6}$ such that	
	$$
	0 \leq E_{\lambda,\beta }(u)	
	\leq\frac{C'}{p^{-}_{m}}\|u\|_{\overrightarrow{p}(x)}^{p^{+}_{M}}
	+\frac{C_{4}}{\tilde{r}}\|u\|_{\overrightarrow{p}(x)}^{r^{+}}
	- \frac{C_{6}}{h^-}\left\| u\right\|_{\overrightarrow{p}(x)}^{h^-}
	-C'_{2}\| u\|_{\overrightarrow{p}(x)}^{\theta_{\lambda}}
	+C_5'.
	$$
	Given that $\theta_{\lambda}>r^+$ and  $h^->p_M^+$, we can conclude that
	$\mathcal{S}_1$ is bounded in $X$.	
	Hence, by invoking Theorem  \ref{SPMT},  it follows that  $E_{\lambda,\beta}$ possesses an unbounded sequence of critical values, which in turn, implies that problem \eqref{e1.1} has infinitely many weak solutions in $X$. The proof of Theorem \ref{thm2} is thus complete. 
\qed

\subsection*{\small \bf Acknowledgments}
{\small The second author expresses her gratitude to the Faculty of Fundamental Science, Industrial University of Ho Chi Minh City, Vietnam, for the opportunity to work in collaboration. The third author was supported by the Slovenian Research and Innovation Agency grants P1-0292, J1-4031, J1-4001, N1-0278, N1-0114, and N1-0083. The authors express their gratitude to the editor and referees for their valuable comments and suggestions.}

\subsection*{\small \bf Conflict of Interest}
{\small The authors declare to have no conflict of interest.}


\begin{thebibliography}{99}	
	\bibitem{Alves0}  Alves, C.O.,  Barreiro, J.L.P. :
	Existence and multiplicity of solutions for a $p(x)$-Laplacian equation with critical growth.
	{Journal of Mathematical Analysis and Applications} 403, 143--154  (2013)
	
	\bibitem{Alves}	 Alves, C.O., El Hamidi,  A.:
	Existence of solution for a anisotropic equation with critical exponent.
	{Differential and Integral Equations}  \textbf{21}(1-2), 25--40 (2008)
		
	\bibitem{Alves1}  Alves C.O.,   Ferreira M.C.:
	Existence of solutions for a class of $p (x)$-Laplacian equations involving a concave-convex nonlinearity with critical growth in $\mathbb{R}^{N}$.
	{Topological Methods in Nonlinear Analysis}  \textbf{45}(2), 399--422 (2015)	
	
	\bibitem{AmbrosettiRabinowitz}  Ambrosetti, A., Rabinowitz, P. H.:  Dual variational methods in critical point theory and applications.
	{Journal of Functional Analysis}  \textbf{14}(4), 349--381 (1973)
	
	\bibitem{AnDi}	 Antontsev, S.,   Diaz, J.I., Shmarev, S.:
	Energy methods for free boundary problems:applications to nonlinear PDEs and fluid mechanics. Progress in nonlinear differential equations and their applications.
	{Applied Mechanics Reviews}  \textbf{55}(4), B74--B75 (2002)
	
	\bibitem{AnRo}  Antontsev, S.N.,  Rodrigues, J.F.:
	 On stationary thermorheological viscous flows.
	{Annali dell'Universita di Ferrara. Sezione VII. Scienze Matematiche}  \textbf{52}(1), 19--36 (2006)
	
	\bibitem{Bear}  Bear, J.:
	Dynamics of Fluids in Porous Media.
	{American Elsevier, New York} (1972)
	
	\bibitem{Bonder}  Bonder, J.F., Silva, A.: Concentration-compactness principal for variable exponent space and applications.
	{Electronic Journal of Differential Equations} \textbf{2010}(141), 1--18 (2010)
			
	\bibitem{BoureanuMatei}  Boureanu, M.M.,   Matei, A., Sofonea, A.: Nonlinear problems with $p(\cdot)$-growth conditions and applications to antiplane contact models.
	{Advanced Nonlinear Studies} \textbf{14}(2), 295--313 (2014)
	
	\bibitem{Boureanu}   Boureanu, M.M.,  R\u{a}dulescu,  V.D.: Anisotropic Neumann problems in Sobolev spaces with variable exponent.
	{Nonlinear Analysis: Theory, Methods and Applications} \textbf{75} (12), 4471--4482 (2012)
	
	\bibitem{Boureanu1}  Boureanu, M.M.,  Udrea,  D.N.: Existence and multiplicity results for elliptic problems with $\vec{p}(.)$-Growth conditions.
	{Nonlinear Analysis: Real World Applications} \textbf{14}(4), 1829--1844 (2013)
	
	\bibitem{Brezis}  Brezis, H.,   Nirenberg, L.:  Positive solutions of nonlinear elliptic equations involving critical Sobolev exponents.  {Communications on Pure and Applied Mathematics} \textbf{36}(4), 437--47 (1963)
	
	\bibitem{Chaker-Kim-Weidner}  Chaker, J., Kim, M., Weidner, M.:
	The concentration-compactness principle for the nonlocal anisotropic $p$-Laplacian of mixed order.
	{Nonlinear Analysis: Theory, Methods and Applications} \textbf{232}, 113254 (2023)
	
	\bibitem{Chems1}  Chems Eddine, N.:
	Existence and multiplicity of solutions for Kirchhoff-type potential systems with variable critical growth exponent.
	{Applicable Analysis} \textbf{102}(4), 1250--1270 (2023) 
	
	\bibitem{ChemsNguyenRagusa}  Chems Eddine, N.,   Nguyen, P.D.,  Ragusa, M.A.:
	Existence and multiplicity of solutions for a class of critical anisotropic elliptic equations of Schrödinger-Kirchhoff-type. {Mathematical Methods in the Applied Sciences} \textbf{46}(16), 16782--16801 (2023) 
	
	\bibitem{Chems0} Chems Eddine, N.,  Ragusa, M.A.:
	Generalized critical Kirchhoff-type potential systems with Neumann boundary conditions.
	{Applicable Analysis}  \textbf{101}(11), 3958--3988  (2022)
		
	\bibitem{ChenL}  Chen, Y.M.,  Levine, S.,  Rao, M.: Variable exponent, linear growth functionals in image restoration.
	{SIAM Journal on Applied Mathematics } \textbf{66}(4), 1383--1406 (2006)
	
	\bibitem{Cruz-UribeFiorenza}  Cruz-Uribe, D. V., Fiorenza,  A.: Variable Lebesgue Spaces Foundations and Harmonic Analysis.
	{Applied and Numerical Harmonic Analysis}, Birkauser (2013)
	
	\bibitem{Dib1} Di Benedetto, E.:
	Degenerate Parabolic Equations.
	 {Springer-Verlag, New York} (1993)
	
	\bibitem{Diening} Diening, L.:
	Theorical and numerical results for electrorheological fluids. {Ph.D. Thesis, University of Freiburg, Germany} (2002)
	
	\bibitem{Dien}  Diening, L.,  Harjulehto, P. , H\"{a}st\"{o}, P. ,  Ru\v{z}icka,  M.:
	Lebesgue and Sobolev Spaces with Variable Exponents.
	{Lecture Notes in Mathematics}, 2017, Springer-Verlag, Heidelberg  (2011)
	
	\bibitem{Edmu}  Edmunds, D.E.,  Rakosnik, J.:
	Sobolev embeddings with variable exponent.
	{Studia Mathematica} \textbf{3}(143), 267--293 (2000)
		
	\bibitem{El Hamidi1} El Hamidi, A.,   Rakotoson, J. M.: Extremal functions for the anisotropic Sobolev inqualities. {Annales de l'I.H.P. Analyse non linéaire} \textbf{24}(5), 741--756 (2007)
		
	\bibitem{fananiso}  Fan, X.:
	Anisotropic variable exponent Sobolev spaces and $\vec p(\cdot)$-Laplacian equations.
	{Complex Variables and Elliptic Equations} \textbf{55}(7-9), 1--20 (2010)
	
	\bibitem{fanz}  Fan, X.,   Zhao, D.:
	On the spaces $L^{p(x)}(\Omega)$ and $W^{m,p(x)}(\Omega)$.
	{Journal of Mathematical Analysis and Applications} \textbf{263}(2), 424--6446 (2001)
	
	\bibitem{Figueiredo}  Figueiredo, G.,   J{\'u}nior, J.R.S.,   Su{\'a}rez, A.:
	 Multiplicity results for an anisotropic equation with subcritical or critical growth.
	 {Advanced Nonlinear Studies} \textbf{15}(2), 377--394 (2015)
	
	\bibitem{Figueiredo1}  Figueiredo, G.M., Silva,  J.R.:
	A critical anisotropic problem with discontinuous nonlinearities.
	{Nonlinear Analysis: Real World Applications} \textbf{47}(4), 364--372 (2019)
	
	\bibitem{Fonseca}  Fonseca, I.,  Leoni, G.:
	 Modern Methods in the Calculus of Variations: $L^{p}$ Spaces. {Springer} (2007)
	
	\bibitem{Fu}  Fu, Y.Q.:
	The principle of concentration compactness in $L^p(x)$ spaces and its application.
	{Nonlinear Analysis: Theory, Methods and Applications} \textbf{71}(5-6), 1876--1892 (2009)
		
	\bibitem{Fu1}	Fu, Y.,   Zhang, X.:
	Multiple solutions for a class of $p(x)$-Laplacian equations in involving the critical exponent.
	{Proceedings of the Royal Society A: Mathematical, Physical and Engineering Sciences} \textbf{466}(2118), 1667--1686 (2010)
	
	\bibitem{Ho}  Ho, K.,  Kim,  Y.H.:
	The concentration-compactness principles for  $W^{s,p(.,.)}(\mathbb{R}^N)$ and application.
	{Advances in Nonlinear Analysis} \textbf{10}(1), 816--848 (2021)
		
	\bibitem{HoSim}  Ho, K.,  Sim, I.:
	On degenerate $p(x)$-Laplace equations involving critical growth with two parameters.
	{Nonlinear Analysis: Theory, Methods and Applications} \textbf{132}, 95--114 (2016)
		
	\bibitem{Hurtado}  Hurtado, E.J.,  Miyagaki,  O.H.,
	Rodrigues, R.S.:
	Existence and asymptotic behaviour for a Kirchhoff type equation with variable critical growth exponent.
	{Milan Journal of Mathematics} \textbf{77}(4), 71--102 (2017)
	
	\bibitem{ji}  Ji, C.:
	An eigenvalue of an anisotropic quasilinear elliptic equation with variable exponent and Neumann boundary condition.
	{Nonlinear Analysis: Theory, Methods and Applications} \textbf{71}(10), 4507--4514 (2009)
	
	\bibitem{KR}  Kov\'a\v cik, O., R\'akosn\'{\i}k, J.:
	 On spaces $L^{p(x)}$ and $W^{k,p(x)}$.
	 {Czechoslovak mathematical journal} \textbf{41}(4), 592--618 (1991)

	\bibitem{Lions1}  Lions, P.L.:
	The concentration-compactness principle in calculus of variation, the limit case, part 2.
	{Revista Matemática Iberoamericana} \textbf{1}(1), 145--201 (1985)
	
	\bibitem{m1}  Mih\u{a}ilescu, M.,  Pucci, P. , R\u{a}dulescu, V.D.: Nonhomogeneous boundary value problems in anisotropic Sobolev spaces. {Comptes Rendus Mathematique} \textbf{345}(10), 561--566 (2007)	
		
	\bibitem{m2}  Mih\u{a}ilescu,  M.,  Pucci, P., R\u{a}dulescu, V.D.: Eigenvalue problems for anisotropic quasilinear elliptic equations with variable exponent.
	{Journal of Mathematical Analysis and Applications} \textbf{340}(1), 687--698 (2008)
	
	\bibitem{Mosconi-Squassina} Mosconi, S.,  Squassina, M.: Nonlocal problems at nearly critical growth.
	{Nonlinear Analysis: Theory, Methods and Applications} \textbf{136}, 84--101 (2016)
	
	\bibitem{Ourraoui}  Ourraoui, A.,   Ragusa, M.A.:  An existence result for a class of $\vec{p}(x)$-anisotropic type equations.
	{ Symmetry} \textbf{13}(4), 633 (2021)
	
	\bibitem{Palatucci-Pisante}  Palatucci, G.,  Pisante,  A.: Improved Sobolev embeddings, profile decomposition, and concentration-compactness for fractional Sobolev spaces.
	{Calculus of Variations and Partial Differential Equations} \textbf{50}, 799--829 (2014)
		
	\bibitem{Rabinowitz}  Rabinowitz, P.H.:
	Minimax Methods in Critical Point Theory with Applications to Differential Equations.
	{CBMS Regional Conference Series in Mathematics} \textbf{65}, {American Mathematical Society}  (1986)
	
	\bibitem{radulescu1}   R\u{a}dulescu, V.D., Repov\v{s},  D.D.:
	Partial Differential Equations with Variable Exponents: Variational Methods and Qualitative Analysis.
	{CRC Press, Boca Raton, FL.} (2015)
	
	\bibitem{R}  R\.{u}\v{z}i\v{c}ka, M.:
	Electrorheological Fluids: Modeling and Mathematical Theory. {Springer-Verlag, Berlin} (2002)
	
	\bibitem{Servadei-Valdinoci1}  Servadei, R.,  Valdinoci, E.:
	A Brezis-Nirenberg result for non-local critical equations in low dimension.
	{Communications on Pure and Applied Analysis} \textbf{12}(6), 2445--2464
	(2013)
	
	\bibitem{Servadei-Valdinoci2}  Servadei, R., Valdinoci, E.:
	The Brezis-Nirenberg result for the fractional Laplacian.
	{Transactions of the American Mathematical Society} \textbf{367}(1), 67--102  (2015)
	
	\bibitem{SobSimmonds}  Simmonds, A.J.:
	Electro-rheological valves in a hydraulic circuit.
	{IEE Proceedings-D}, \textbf{138}, 400--404 (1991)
	
	\bibitem{Sob5} Stanway,  R., Sproston, J.L.,  El-Wahed,  A.K.: Applications of electrorheological fluids in vibration control:a survey. {Smart Materials and Structures}  \textbf{5}, 464--482 (1996)	
	
	\bibitem{Zhikov}  Zhikov, V.V.:
	Averaging of functionals in the calculus of variations and elasticity.
	{Mathematics of the USSR-Izvestiya}  \textbf{29}, 33--66 (1987)
	
\end{thebibliography}
\end{document}